\theoremstyle{plain}
\newtheorem{thm}{Theorem}[section]
\newtheorem{prop}[thm]{Proposition}
\newtheorem{lem}[thm]{Lemma}
\newtheorem{cjt}[thm]{Conjecture}
\theoremstyle{definition}
\def\txtd{{\textnormal{d}}}
\def\txte{{\textnormal{e}}}
\def\txti{{\textnormal{i}}}
\def\txtD{{\textnormal{D}}}
\newtheoremstyle{myremark}
  {5pt}
  {5pt}
  {\sffamily}
  {10pt}
  {\sffamily}
  {:}
  {.5em}
  {}
\theoremstyle{myremark}
\newtheorem*{remark}{Remark}
\def\R{\mathbb{R}}
\def\C{\mathbb{C}}
\def\N{\mathbb{N}}
\newcommand{\be}{\begin{equation}}
\newcommand{\ee}{\end{equation}}
\newcommand{\bea}{\begin{eqnarray}}
\newcommand{\eea}{\end{eqnarray}}
\newcommand{\beann}{\begin{eqnarray*}}
\newcommand{\eeann}{\end{eqnarray*}}
\newcommand{\benn}{\begin{equation*}}
\newcommand{\eenn}{\end{equation*}}
\def\ra{\rightarrow}
\def\I{\infty}
\def\I{\infty}
\newcommand{\cB}{{\mathcal B}}  
\newcommand{\cK}{{\mathcal K}}  
\newcommand{\cL}{{\mathcal L}}  
\newcommand{\cM}{{\mathcal M}}  
\newcommand{\cN}{{\mathcal N}}  
\newcommand{\cO}{{\mathcal O}}  
\begin{document}

\author{Franz Achleitner and Christian Kuehn\footnotemark[1]}

\renewcommand{\thefootnote}{\fnsymbol{footnote}}
\footnotetext[1]{%
Institute for Analysis and Scientific Computing, 
Vienna University of Technology, 
1040 Vienna, Austria.} 
\renewcommand{\thefootnote}{\arabic{footnote}}
 
\title{On Bounded Positive Stationary Solutions\\ for a Nonlocal Fisher-KPP Equation}

\maketitle

\begin{abstract}
We study the existence of stationary solutions for a nonlocal version of the 
Fisher-Kolmogorov-Petrovskii-Piscounov (FKPP) equation. The main motivation is a
recent study by Berestycki et {al.} [Nonlinearity 22 (2009), {pp.}~2813--2844] where
the nonlocal FKPP equation has been studied and it was shown for the spatial domain $\R$ and
sufficiently small nonlocality that there are only two bounded non-negative stationary 
solutions. Here we provide a similar result for $\R^d$ using a completely different approach.
In particular, an abstract perturbation argument is used in suitable weighted
Sobolev spaces. One aim of the alternative strategy is that it can eventually be generalized 
to obtain persistence results for hyperbolic invariant sets for other nonlocal evolution 
equations on unbounded domains with small nonlocality, {i.e.}, to improve our understanding
in applications when a small nonlocal influence alters the dynamics and when it does not.
\end{abstract}

{\bf Keywords:} Fisher-KPP equation, FKPP, nonlocal convolution operator, steady state, 
stationary solution, weighted Sobolev space, implicit function theorem, perturbation theory.

\section{Introduction}  
\label{sec:intro}

The (local) Fisher-Kolmogorov-Petrovskii-Piscounov (FKPP) equation is given by
\be
\label{eq:Fisher-KPP_classical}
\frac{\partial u}{\partial t}=\Delta u+\mu u(1- u),\qquad x\in\R^d,~ u:\R^d\times [0,\I)\ra \R,~u=u(x,t),
\ee
where $\mu>0$ is a parameter and $\Delta=\sum_{i=1}^d\frac{\partial^2 }{\partial x_i^2}$ 
denotes the Laplacian. Originally the equation was studied by Fisher \cite{Fisher} 
and Kolmogorov-Petrovskii-Piscounov \cite{KolmogorovPetrovskiiPiscounov} for $d=1$ with a focus 
on traveling waves connecting the two homogeneous stationary (or steady) states $u\equiv 0$ and 
$u\equiv 1$. The FKPP equation has been studied extensively as a standard model for invasion waves in 
mathematical biology \cite{Okubo,Murray1} and for propagation into unstable states in physics
\cite{EbertvanSaarloos}, often with a focus on the wave speed
\cite{BerestyckiHamelNadirashvili,BerestyckiHamelNadirashvili1}.\medskip 

This paper is directly motivated by the results of Berestycki et {al.}~\cite{BerestyckiNadinPerthameRyzhik}  
who considered a nonlocal version of FKPP equation 
\be
\label{eq:Fisher-KPP}
\frac{\partial u}{\partial t}=\Delta u+\mu u(1-\phi\ast u),
\ee
where $\phi:\R^d\ra\R$ is a kernel for the convolution 
\benn
(\phi\ast u)(x):=\int_{\R^d}u(x-y)\phi(y)~\txtd y=\int_{\R^d}u(y)\phi(x-y)~\txtd y.
\eenn
The term $\phi \ast u$ models nonlocal saturation or competition effects \cite{LefeverLejeune}. 
The nonlocal FKPP equation \eqref{eq:Fisher-KPP} has been studied from various perspectives 
\cite{AlfaroCoville,Britton1,Britton,FangZhao,GenieysVolpertAuger,Gourley,NadinPerthameTang} 
with a focus on stability analysis of steady states, the existence of traveling waves and applications
in mathematical biology; there 
are also several results available for bounded domains \cite{FurterGrinfeld,PerthameGenieys} instead 
of $\R^d$. Other types of nonlocality may arise instead of the nonlinear term $u(\phi\ast u)$ 
\cite{CovilleDupaigne,CovilleDupaigne1,PerthameSouganidis,VougalterVolpert}, generalizations of \eqref{eq:Fisher-KPP}
have been considered \cite{EbertvanSaarloos,WangLiRuan} as well as the nonlocal bistable case \cite{AlfaroCovilleRaoul}. 
Furthermore, there are several other different nonlocal versions of the FKPP equation involving time delay 
\cite{AshwinBartuccelliBridgesGourley,Zou} or nonlocal diffusion via fractional operators 
\cite{del-Castillo-NegreteCarrerasLynch,FelmerYangari,MancinelliVergniVulpiani}.\medskip 

In this paper we focus on the analysis of stationary solutions for the FKPP equation 
\eqref{eq:Fisher-KPP} which satisfy 
\be
\label{eq:ss_Fisher-KPP}
0=\Delta u+\mu u(1-\phi\ast u),\qquad x\in\R^d,~u:\R^d\ra \R,~x\mapsto u(x).
\ee 
The main goal is to understand certain subclasses of classical solutions $u\in C^2_b(\R^d,\R)=:C^2_b$ 
which satisfy \eqref{eq:ss_Fisher-KPP} pointwise. However, we shall need weaker solution spaces to infer 
properties about the relevant classical solutions. It is assumed in \cite{BerestyckiNadinPerthameRyzhik} that 
the kernel satisfies 
\be
\label{eq:kernel_assume}
\phi\geq 0,\qquad \phi(0)>0,\qquad \nabla \phi\in C_b(\R^d),
\qquad \int_{\R^d} \phi(x)~\txtd x=1,\qquad \int_{\R}x^2 \phi(x)~\txtd x<\I.
\ee
Observe that $u\equiv 0$ is a solution of \eqref{eq:ss_Fisher-KPP} and since $\int_{\R^d}\phi ~\txtd x=1$ 
it follows that $u\equiv 1$ is also always a solution of \eqref{eq:ss_Fisher-KPP}. 
Note that a Gaussian probability density and a (suitably extended) exponential probability 
density satisfy the assumptions \eqref{eq:kernel_assume}. Considering $\tilde{x}:=\sigma x$, a 
function $\tilde{u}(\tilde{x}):=u(\tilde{x}/\sigma)=u(x)$ and kernel $\phi_\sigma(\tilde{x}):=
\frac{1}{\sigma^d} \phi(\tilde{x}/\sigma)=\frac{1}{\sigma^d}\phi(x)$ one finds, upon dropping the 
tildes and letting $\mu=\sigma^2$, that
\be
\label{eq:ss_Fisher-KPP_space_scaled}
0=\Delta u+u(1-\phi_\sigma\ast u)=:F(u,\sigma),
\ee 
Hence, upon a space rescaling, the equations \eqref{eq:ss_Fisher-KPP} and 
\eqref{eq:ss_Fisher-KPP_space_scaled} are equivalent if $\mu\neq0$ and $\sigma\neq 0$. 
We shall work with the version 
\eqref{eq:ss_Fisher-KPP_space_scaled} from now on. The kernel $\phi_\sigma$ converges to a 
delta-distribution $\lim_{\sigma\ra 0}\phi_\sigma(x)=\delta(x)$. In the limit $\sigma=0$ 
for \eqref{eq:ss_Fisher-KPP_space_scaled} one recovers the standard elliptic (local) 
FKPP steady state problem 
\be
\label{eq:local_Fisher_steady}
0=\Delta u+u(1-u),
\ee
which again has the homogeneous steady-states $u\equiv0$ and $u\equiv 1$. Note 
that the formal limits $\mu=0$ and $\sigma=0$ do not coincide, if the spatial scaling
is disregarded, since
\benn
\text{\eqref{eq:ss_Fisher-KPP_space_scaled} for }\sigma\ra0~\Rightarrow\quad 0=\Delta u+u(1- u)
\qquad \not\leftrightarrow\qquad 
\text{\eqref{eq:ss_Fisher-KPP} for }\mu\ra0~\Rightarrow\quad 0=\Delta u.
\eenn
For the one-dimensional case, the following result for the nonlocal FKPP-equation is known:

\begin{thm}[\cite{BerestyckiNadinPerthameRyzhik}, $d=1$]
\label{thm:french_thm1}
Suppose the assumptions \eqref{eq:kernel_assume} hold. There exists $\sigma_0>0$ ($\mu_0>0$) 
such that for $\sigma\in[0,\sigma_0]$ ($\mu\in(0,\mu_0]$) 
the only bounded non-negative classical solutions of the stationary nonlocal FKPP equation 
\eqref{eq:ss_Fisher-KPP_space_scaled} (respectively \eqref{eq:ss_Fisher-KPP}) 
are $u\equiv0$ and $u\equiv 1$.
\end{thm}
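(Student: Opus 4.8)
The plan is to argue by contradiction, using elliptic regularity and compactness to reduce to the local steady-state problem \eqref{eq:local_Fisher_steady}, whose bounded non-negative solutions on $\R$ are completely classified by an elementary phase-plane analysis. By the scaling equivalence recorded above it suffices to produce $\sigma_0>0$ such that for $\sigma\in[0,\sigma_0]$ the only bounded non-negative classical solutions of $0=u''+u(1-\phi_\sigma\ast u)$ on $\R$ are $u\equiv0$ and $u\equiv1$; the $\mu$-statement then follows. Writing the equation as $u''+(1-\phi_\sigma\ast u)\,u=0$ with bounded coefficient and using $u\ge0$, the strong maximum principle forces any such solution to be either identically $0$ or strictly positive, so I may assume $u>0$. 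Now suppose the assertion fails: there are $\sigma_n\downarrow0$ and positive bounded classical solutions $u_n$ with $u_n\not\equiv1$, and I seek a contradiction.

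The first and, I expect, hardest step is a uniform a priori bound $\sup_n\|u_n\|_{L^\infty(\R)}<\infty$. This is exactly where the local theory does not suffice: the convolution couples values of $u$ at far-apart points, so the pointwise maximum-principle estimates that trivially bound solutions of \eqref{eq:local_Fisher_steady} no longer close by themselves. I would proceed by blow-up: if $M_n:=\|u_n\|_\infty\to\infty$, put $w_n(x):=u_n(x/\sqrt{M_n})/M_n$, so that $w_n''=w_n(\phi_{\tau_n}\ast w_n-1/M_n)$ with $\tau_n:=\sigma_n\sqrt{M_n}$ and $0\le w_n\le1$; then $\|w_n''\|_\infty\le1$, hence $\|w_n'\|_\infty\le\sqrt2$ by interpolation, and after translating to a near-maximizer of $w_n$ one extracts $w_n\to w_\ast$ in $C^2_{\mathrm{loc}}$ with $0\le w_\ast\le1$ and $w_\ast(0)=1$. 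Passing to the limit in the equation — taking, if $\tau_n\to\infty$, a weak-$\ast$ limit of $\phi_{\tau_n}\ast w_n$ — the right-hand side is non-negative in the limit, so $w_\ast$ is convex and bounded, hence constant $\equiv1$, which is then incompatible with the limiting equation; the regime $\tau_n\to\infty$ is the delicate one and may require an additional rescaling at the intrinsic length scale of $w_n$. Granted $\|u_n\|_\infty\le C$, Schauder estimates for $u_n''=u_n(\phi_{\sigma_n}\ast u_n-1)$ yield $\|u_n\|_{C^{2,\alpha}(\R)}\le C'$ for some $\alpha\in(0,1)$, and Taylor-expanding the convolution (using the finite second moment of $\phi$) gives $\|\phi_{\sigma_n}\ast u_n-u_n\|_\infty\le C\sigma_n\to0$.

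With $0\le u_n\le C$, $\|u_n\|_{C^{2,\alpha}}\le C'$ and $\sigma_n\to0$ in hand, I would finish by a trichotomy on the range of $u_n$. If $\sup u_n\le\tfrac14$ for infinitely many $n$, then $\phi_{\sigma_n}\ast u_n\le\tfrac14<1$ together with $u_n\ge0$ forces $u_n''\le0$, so $u_n$ is concave and bounded, hence constant, hence $\equiv0$, contradicting $u_n>0$. If $\inf u_n\ge\tfrac34$ for infinitely many $n$, then either $\|u_n-1\|_\infty\to0$ along this subsequence — in which case, with $v_n:=u_n-1$ solving $(\partial_{xx}-\phi_{\sigma_n}\ast)\,v_n=v_n(\phi_{\sigma_n}\ast v_n)$ and the Fourier symbol $-\xi^2-\widehat\phi(\sigma_n\xi)$ bounded away from $0$ for $\sigma_n$ small (so $\partial_{xx}-\phi_{\sigma_n}\ast$ has a uniformly bounded inverse between the relevant H\"older spaces), a contraction-mapping argument on $v=(\partial_{xx}-\phi_{\sigma_n}\ast)^{-1}\!\big(v(\phi_{\sigma_n}\ast v)\big)$, whose right side is quadratically small, forces $v_n\equiv0$, contradicting $u_n\not\equiv1$ — or $\|u_n-1\|_\infty\not\to0$, in which case choosing $x_n$ nearly maximizing $|u_n-1|$, translating, and extracting a $C^2_{\mathrm{loc}}$-limit gives a bounded solution $w_\ast$ of \eqref{eq:local_Fisher_steady} with $\tfrac34\le w_\ast\le C$ and $w_\ast(0)\neq1$, contradicting that $w_\ast\equiv1$ is the only bounded solution of \eqref{eq:local_Fisher_steady} that stays away from $0$. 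In the remaining case $u_n$ attains a value in $[\tfrac14,\tfrac34]$ for all large $n$ (by connectedness, since then $\sup u_n>\tfrac14$ and $\inf u_n<\tfrac34$); translating to such a point $p_n$, extracting a limit, and using $\|\phi_{\sigma_n}\ast u_n(\cdot+p_n)-u_n(\cdot+p_n)\|_\infty\to0$ produces a bounded non-negative classical solution $w_\ast$ of \eqref{eq:local_Fisher_steady} with $w_\ast(0)\in[\tfrac14,\tfrac34]$, hence $w_\ast\not\equiv0$; but the conserved quantity $\tfrac12(w')^2+\tfrac12w^2-\tfrac13w^3$ shows at once that the only bounded non-negative solutions of \eqref{eq:local_Fisher_steady} on $\R$ are $0$ and $1$, so $w_\ast\equiv1$, contradicting $w_\ast(0)\le\tfrac34$. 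In every case we reach a contradiction, proving the theorem.

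The one genuinely hard ingredient is the uniform $L^\infty$ bound of the second paragraph; everything else is a careful but routine combination of Schauder theory, Arzel\`a--Ascoli, the phase portrait of the local ODE, and — for solutions already $C^{2,\alpha}$-close to $1$ — the invertibility of the linearized nonlocal operator.
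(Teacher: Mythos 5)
The theorem you are trying to prove is not proved in this paper: it is quoted from Berestycki--Nadin--Perthame--Ryzhik \cite{BerestyckiNadinPerthameRyzhik}, and the authors describe that proof as resting on ``a-priori estimates, explicit Taylor expansion, approximation on finite regions and several integral estimates.'' The paper's own contribution is Theorem~\ref{thm:main}, a variant for arbitrary $d$ that \emph{assumes} an a priori bound $\|u\|_{C^2_b}\le K$ and is proved by an abstract route: weighted Sobolev spaces $X,Y$, the implicit function theorem at $(u,\sigma)=(1,0)$ via invertibility of $\cL_1=\Delta-1$ (Proposition~\ref{prop:L1}), a comparison with oscillatory Helmholtz solutions near $(0,0)$ (Section~\ref{sec:zero}), and a compactness argument for branches away from the constants (Section~\ref{sec:proof_main}). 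Your proposal is genuinely different from both: you work directly in classical H\"older spaces, try to manufacture the a priori bound yourself via a blow-up, and then reduce to the local problem \eqref{eq:local_Fisher_steady} through translation, Arzel\`a--Ascoli, and a trichotomy on the range of $u$. In flavor this is closer to \cite{BerestyckiNadinPerthameRyzhik} than to the present paper, although you replace their integral estimates by a pointwise/ODE argument.

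The soft part of your argument is sound. Given $\|u_n\|_\infty\le C$, Schauder gives uniform $C^{2,\alpha}$ bounds, the second-moment assumption in \eqref{eq:kernel_assume} yields $\|\phi_{\sigma_n}\ast u_n-u_n\|_\infty=O(\sigma_n)$, and the trichotomy ($\sup u_n\le\tfrac14$, $\inf u_n\ge\tfrac34$, or an intermediate value is attained) is exhaustive. The small-solution case exploits that a bounded concave function on $\R$ is constant; this is exactly the step where the paper needs a more elaborate comparison with radial Bessel-type solutions to handle $d\ge 2$, so your shortcut is correct but dimension-specific. The large-solution subcase $\|u_n-1\|_\infty\to0$ is handled by a contraction whose symbol estimate $|\xi^2+\hat\phi(\sigma\xi)|\ge c>0$ is indeed uniform for $\sigma$ small; this plays the same role as Proposition~\ref{prop:L1} and the implicit function theorem do in Section~\ref{sec:implicit}. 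The phase-plane energy $\tfrac12(u')^2+\tfrac12u^2-\tfrac13u^3$ correctly identifies $0$ and $1$ as the only bounded non-negative solutions of \eqref{eq:local_Fisher_steady}, in agreement with Lemma~\ref{lem:local}.

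The genuine gap is exactly where you flag it: the uniform $L^\infty$ bound. The rescaling $w_n(x)=u_n(x/\sqrt{M_n})/M_n$ closes in the regimes $\tau_n:=\sigma_n\sqrt{M_n}\to\tau\in[0,\infty)$, where the limit equation $w_\ast''=w_\ast(\phi_\tau\ast w_\ast)$ (or $w_\ast^2$), convexity, and $w_\ast(0)=1$ yield a contradiction. But if $\tau_n\to\infty$ you only obtain $w_\ast''=w_\ast\psi$ with $0\le\psi\le1$ a weak-$\ast$ limit of $\phi_{\tau_n}\ast w_n$; convexity forces $w_\ast\equiv1$ and then $\psi\equiv0$, which is \emph{not} a contradiction, because the mass of $\phi_{\tau_n}$ escapes every fixed compact set and $C^0_{\mathrm{loc}}$-convergence of $w_n$ to $1$ gives no lower bound on $\phi_{\tau_n}\ast w_n$. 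The ``additional rescaling at the intrinsic length scale'' is not spelled out and I do not see how it closes: rescaling instead by $\sigma_n$ gives the singularly perturbed equation $\tau_n^{-2}v''=v(\phi\ast v)-M_n^{-1}v$, for which there is no uniform gradient bound to extract a nontrivial limit from. This is precisely the step that \cite{BerestyckiNadinPerthameRyzhik} handle by integral estimates, and that the present paper sidesteps entirely by building $\|u\|_{C^2_b}\le K$ into the hypotheses of Theorem~\ref{thm:main} and flagging it as the ``main difference'' from Theorem~\ref{thm:french_thm1}. Until the a priori bound is supplied, your argument proves only the conditional statement that Theorem~\ref{thm:main} already gives (restricted to $d=1$), not the unconditional Theorem~\ref{thm:french_thm1}.
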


Theorem \ref{thm:french_thm1} is a persistence result which shows that if the nonlocal effect
is sufficiently small then there are no additional non-negative bounded solutions beyond the two
trivial ones. 
The proof by Berestycki et {al.}~\cite{BerestyckiNadinPerthameRyzhik} uses a combination of a-priori 
estimates, explicit Taylor expansion, approximation on finite regions and several integral 
estimates. 

In this paper we provide a result similar to Theorem \ref{thm:french_thm1} for arbitrary dimensions.
We also lift some assumptions on the kernel $\phi$. This main result is stated in Section 
\ref{sec:result}. We note that our proof does not use the approach in 
\cite{BerestyckiNadinPerthameRyzhik}. We use a perturbation technique involving 
the implicit function theorem in suitable function spaces, bifurcation theory and knowledge 
about the limiting equation \eqref{eq:local_Fisher_steady} for $\sigma=0$; the strategy of the 
proof is outlined in Section \ref{sec:result}.  

\section{The Main Result}
\label{sec:result}

Instead of the assumptions \eqref{eq:kernel_assume} we shall require that 

\begin{itemize}
 \item[(A)] $\phi\in L^1(\R^d)$, $\phi\geq 0$ with $\int_{\R^d} \phi(x) ~\txtd x=1$.
\end{itemize}

We note that some of the assumptions \eqref{eq:kernel_assume} have also been
removed in \cite{HamelRyzhik} for the case $d=1$. However, to remove any form of integrability
or boundedness assumption of the kernel $\phi$ seems to be very difficult, if not 
impossible.
 
\begin{thm}[$d\in\N$]
\label{thm:main}
Suppose (A) holds. For a positive constant $K>1$, there exists a $\sigma_0>0$ ($\mu_0>0$) such that for
$\sigma\in[0,\sigma_0]$ ($\mu\in(0,\mu_0]$) the only bounded
non-negative classical solutions $u\in C^2_b(\R^d)$ with $0\leq \|u\|_{C^2_b}\leq K$ 
of the stationary nonlocal FKPP equation \eqref{eq:ss_Fisher-KPP_space_scaled} 
(respectively \eqref{eq:ss_Fisher-KPP}) are $u\equiv 0$ and
$u\equiv 1$.
\end{thm}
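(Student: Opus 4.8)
The plan is to treat $\sigma$ (equivalently the nonlocality) as a perturbation parameter and to apply the implicit function theorem to the map $F(u,\sigma)$ defined in \eqref{eq:ss_Fisher-KPP_space_scaled} in the neighbourhood of the two known branches $u\equiv 0$ and $u\equiv 1$, working in weighted Sobolev spaces over $\R^d$. The key point is that in the limit $\sigma=0$ the problem reduces to the local FKPP steady-state equation \eqref{eq:local_Fisher_steady}, for which it is classical (Berestycki--Nirenberg-type sliding/moving-plane arguments, or the known structure of bounded nonnegative solutions of $\Delta u+u(1-u)=0$ on $\R^d$) that the only bounded nonnegative solutions are $u\equiv 0$ and $u\equiv 1$; moreover the linearisations $D_uF(0,0)=\Delta+\mathrm{Id}$ and $D_uF(1,0)=\Delta-\mathrm{Id}$ are invertible on appropriately chosen weighted spaces (the latter since $-1$ is in the resolvent set of $\Delta$, the former after a suitable weighted/exponential conjugation that shifts the essential spectrum off $0$). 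So the scheme is: (1) set up the functional-analytic framework — choose weighted Sobolev spaces $X,Y$ on which $F:X\times[0,\sigma_0]\to Y$ is well-defined and $C^1$, in particular verifying that the convolution term $u\mapsto \phi_\sigma\ast u$ is a bounded operator on these spaces uniformly for small $\sigma$ and that $\phi_\sigma\ast u\to u$ as $\sigma\to0$ in the relevant norm; (2) verify invertibility of $D_uF$ at $(0,0)$ and $(1,0)$; (3) apply the implicit function theorem to get, for each small $\sigma$, a unique solution near each of $u\equiv0$ and $u\equiv1$ — which must then be $u\equiv0$ and $u\equiv1$ themselves, since those solve \eqref{eq:ss_Fisher-KPP_space_scaled} for every $\sigma$ because $\phi_\sigma$ integrates to $1$.

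The remaining — and genuinely delicate — issue is that the implicit function theorem only gives \emph{local} uniqueness near the two branches, whereas Theorem \ref{thm:main} asserts uniqueness in the whole ball $\{\|u\|_{C^2_b}\le K\}$. To bridge this gap I would prove an a priori \emph{dichotomy}: any bounded nonnegative classical solution of \eqref{eq:ss_Fisher-KPP_space_scaled} with $\|u\|_{C^2_b}\le K$, for $\sigma$ small, lies in one of the two small neighbourhoods produced by the IFT. The natural route is compactness plus continuity in $\sigma$: suppose not; then there is a sequence $\sigma_n\to0$ and solutions $u_n$ with $\|u_n\|_{C^2_b}\le K$ staying bounded away (in $C^2_b$, or after a suitable translation, since the problem is translation-invariant and one must guard against solutions that ``escape to infinity'') from both $0$ and $1$. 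Elliptic regularity gives uniform $C^{2,\alpha}_{loc}$ bounds, so along a subsequence $u_n\to u_\infty$ in $C^2_{loc}$, and passing to the limit in the equation (using assumption (A) and $\phi_{\sigma_n}\ast u_n\to u_\infty$ locally uniformly, which requires a small argument since $u_n$ is only bounded, not decaying) yields a bounded nonnegative solution $u_\infty$ of the local equation \eqref{eq:local_Fisher_steady} that is not identically $0$ or $1$ — contradicting the classification of solutions of \eqref{eq:local_Fisher_steady}. Care is needed because $C^2_{loc}$ convergence does not by itself force $u_\infty\not\equiv 0,1$; one must recentre each $u_n$ at a point where it is bounded away from $\{0,1\}$, and then argue that the limit inherits that property.

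The main obstacle, then, is this last compactness/limit step on the unbounded domain $\R^d$: controlling the convolution $\phi_\sigma\ast u$ for merely bounded (non-decaying) $u$, handling loss of compactness via translations, and ensuring the limiting local solution is nontrivial. This is exactly where the weighted-space machinery interacts with the classical Liouville-type theorem for \eqref{eq:local_Fisher_steady}; the rest — verifying the hypotheses of the IFT, the mapping and smoothness properties of $F$ on the weighted spaces, and the spectral computations for $\Delta\pm\mathrm{Id}$ — is technical but routine.
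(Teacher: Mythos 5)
Your scheme matches the paper's strategy near the branch $u\equiv 1$ (linearization $\Delta-\mathrm{Id}$, Proposition~\ref{prop:L1}, implicit function theorem) and in the outer compactness step, but there is a genuine gap at the zero branch, which the paper explicitly singles out as a \emph{special point} where the implicit function theorem cannot be applied. The linearization $\txtD_uF_{(0,0)}=\Delta+\mathrm{Id}$ has nontrivial nullspace in every weighted Sobolev space whose weight is integrable: for $d=1$, $\cos x$ and $\sin x$ solve $U''+U=0$ and lie in $H^2(\R;w)$ for any $w\in L^1(\R)$, and for $d\geq2$ the purely oscillatory Helmholtz solutions play the same role (Lemma~\ref{lem:change_sign}, Proposition~\ref{prop:oscillatory}). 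Your proposed repair by a ``weighted/exponential conjugation that shifts the essential spectrum off $0$'' does not work. Conjugating by $e^{a\cdot x}$ replaces the symbol by $-\|\xi\|^2-2i\,a\cdot\xi+\|a\|^2+1$, which still vanishes for $d\geq 2$ (take $\xi\perp a$, $\|\xi\|^2=\|a\|^2+1$); and even in $d=1$, where it does not vanish, the conjugation is merely a unitary change of variables between $H^2(w)$ and $H^2(e^{-2ax}w)$, so it cannot change invertibility. More structurally, any weight making $\Delta+\mathrm{Id}$ injective must exclude bounded oscillatory functions, hence cannot be integrable, and then small $\|u\|_{C^2_b}$ no longer implies small $\|u\|_X$ — so the IFT's local-uniqueness conclusion would not apply to the class of bounded nonnegative solutions you actually need to classify.

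The paper's substitute at $(0,0)$ (Section~\ref{sec:zero}) turns the obstruction into the proof mechanism: the oscillatory nullspace elements \emph{change sign}, while the hypothetical bounded solution $u\geq 0$, $u\not\equiv0$, is strictly positive. Using $\|u\|_\infty<\epsilon$ to get $0\geq\Delta u+(1-\epsilon)u$, a comparison principle on a carefully built bounded domain (whose boundary sits in a ring where $u$ and a scaled radial Bessel/Helmholtz solution $\bar u^\epsilon$ cross) forces $u\geq\bar u^\epsilon$, and then rescaling $\bar u^\epsilon$ yields a contradiction (Propositions~\ref{prop:zero_branch} and~\ref{prop:zero_branch_ra}). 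Some argument of this nature, exploiting nonnegativity rather than linear invertibility, is required; the IFT alone is structurally unavailable at $(0,0)$. A secondary point: your compactness step via recentering and $C^2_{\mathrm{loc}}$ limits needs the extra argument you flag to keep the limit nontrivial; the paper instead extracts a limit using the compact embedding $H^{k+2}(\R^d;w_X)\hookrightarrow H^{k+1}(\R^d;w_X)$ of weighted spaces, so that the weight itself penalizes translating profiles and no separate concentration-compactness step is needed.
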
 

We point out that in Theorem \ref{thm:main} the constant 
$\sigma_0$ (respectively $\mu_0>0$) does depend upon $K>1$. Indeed, the main difference of Theorem \ref{thm:main}
compared to Theorem \ref{thm:french_thm1} is that our perturbation approach allows for the existence of
certain solution branches $(u_j,\sigma_j)$, where $u_j$ depends upon $\sigma_j$, and a certain weighted
norm of $u_j$ becomes unbounded as $\sigma_j\ra 0$ for $j\ra +\I$. In fact, the restriction on $\|u\|_{C^2_b}$
can be weakened to a weighted Sobolev norm bound described below.\medskip

We outline, on a formal level, the main steps of the perturbation argument:

\begin{enumerate}
 \item[(S1)]\label{S1} \textit{The local problem:} The case $\sigma=0$ is well understood and we collect
 the relevant results later in this section. In particular, it is known that the only bounded
 non-negative solutions of the (local) FKPP equation \eqref{eq:local_Fisher_steady} are $u\equiv 0$
 and $u\equiv 1$.
 \item[(S2)]\label{S2} \textit{Function spaces:} To analyze the problem we utilize spaces for 
 weak solutions to infer results about classical solutions. In particular, one would like to choose Banach spaces 
 which include the homogeneous stationary solutions and are adapted to the linear and nonlinear
 parts of the mapping $F(u,\sigma)$ induced by the nonlocal FKPP equation 
 \eqref{eq:ss_Fisher-KPP_space_scaled}. We use suitably weighted Sobolev spaces and their 
 intersections in this paper; see Section \ref{sec:func_spaces}.
 \item[(S3)]\label{S3} \textit{Regular points:} It turns out that the solution $(u,\sigma)=(1,0)$ can be 
 viewed as a regular point of $F$, {i.e.}~the Frech\'{e}t derivative $(\txtD_uF)_{(1,0)}$ is invertible as a 
 linear map in the spaces chosen in (S2). Then the implicit function theorem shows that the solution branch
 $(u,\sigma)=(1,0)$ is locally unique for sufficiently small $\sigma$; see Section \ref{sec:implicit}.
 \item[(S4)]\label{S4} \textit{Special points:} The solution $(u,\sigma)=(0,0)$ cannot be treated directly
 using the implicit function theorem. Hence it requires a special technique. We use results about 
 purely oscillatory solutions to show that any possible bifurcating solutions near the special 
 point must change sign; see Section \ref{sec:zero}.
 \item[(S5)]\label{S5} \textit{Additional branches:} Using an argument from bifurcation theory we show 
 that there are no additional branches of non-negative bounded solutions outside of neighborhoods of  
 the two states $(u,\sigma)=(0,0)$ and $(u,\sigma)=(1,0)$ for sufficiently small $\sigma_0>0$; see also 
 Figure \ref{fig:01}(a)-(b). The details of this argument can be found in Section \ref{sec:proof_main}. 
 \end{enumerate}

It is important to point out that the steps (S1)-(S5) have been designed with a view towards other
persistence problems arising in nonlocal evolution equations; this extension is discussed in Section 
\ref{sec:discussion}. Let us also remark that the main technical complications arise due to the 
unbounded domain $\R^d$ and the convolution term $\phi_\sigma \ast u$ which substantially restrict the type 
of spaces one may use in (S2).

\begin{figure}[htbp]
\psfrag{a}{\scriptsize{(a)}}
\psfrag{b}{\scriptsize{(b)}}
\psfrag{c}{\scriptsize{(c)}}
\psfrag{d}{\scriptsize{(d)}}
\psfrag{u}{$\|u\|_X$}
\psfrag{u1}{$u\equiv1$}
\psfrag{u0}{$u\equiv0$}
\psfrag{sigma}{$\sigma$}
	\centering
		\includegraphics[width=1\textwidth]{./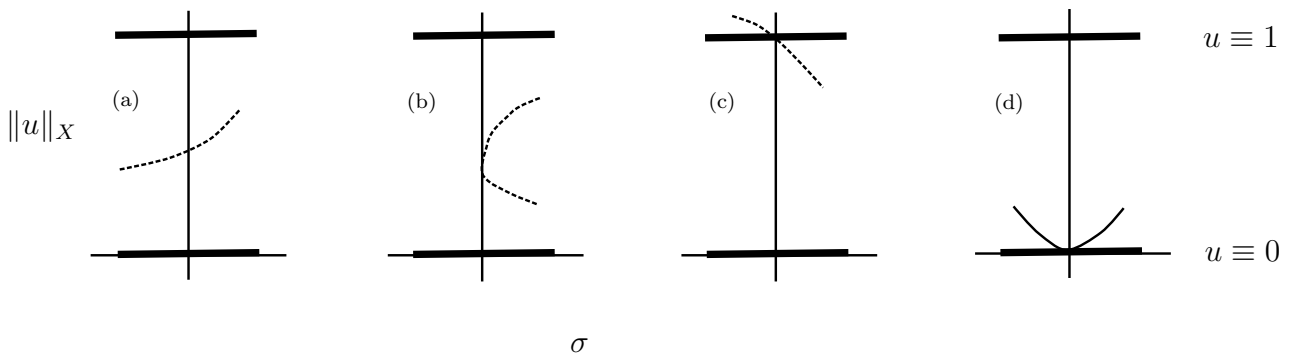}
		\caption{\label{fig:01}Sketch of possible bifurcation scenarios in $(\|u\|_X,\sigma)$-space. The thick
		lines mark the homogeneous stationary states $u\equiv 0$ and $u\equiv 1$. The thin curves indicate possible 
		bifurcation curves. The cases (a)-(b) are impossible as they would violate the result for the local 
		problem $\sigma=0$. The case (c) near $(u,\sigma)=(1,0)$ will be shown to be impossible using the implicit 
		function theorem. We are going to show that the only solutions that could potentially bifurcate near 
		$(u,\sigma)=(0,0)$ are solutions which change sign.}
\end{figure} 

Regardless of these complications, it is always key to understand the step (S1) for a perturbation 
argument. In particular, consider the PDE
\be
\label{eq:KongShen}
0=\Delta u+uf(u),\qquad x\in\R^d~,u:\R^d\ra \R,x\mapsto u(x).
\ee
Suppose $f:\R\ra \R$ is $C^2$. The following result is known:

\begin{thm}(see {e.g.}~\cite{BerestyckiHamelRoques,KongShen})
\label{thm:local}
Suppose $f(u)<0$ for all $u\in\R^+$ with $u\geq \beta_0$ for some $\beta_0>0$ and $f'(u)<0$ for all $u\in\R^+=(0,+\I)$. 
Then there exists a unique positive solution $u^*\in C^2(\R^d,\R^+)$ for \eqref{eq:KongShen} such 
that $\inf_{x\in\R^d} u^*(x)>0$.
\end{thm}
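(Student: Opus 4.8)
\medskip
\noindent\textit{Proof proposal.} I would treat existence and uniqueness separately, with uniqueness being the heart of the matter. Two preliminary observations: first, the hypotheses force $f(0^+)>0$, for if $f<0$ on all of $\R^+$ then any positive bounded solution with $\inf u>0$ would satisfy $\Delta u=-uf(u)\ge c>0$ on $\R^d$ for some constant $c$, so that $u(x)-\tfrac{c}{2d}|x|^2$ is subharmonic and tends to $-\I$, contradicting the sub-mean-value inequality; second, a standard maximum-principle argument shows that every positive classical solution of \eqref{eq:KongShen} with $\inf u>0$ is bounded, which I use freely below. For existence, fix a small $\varepsilon>0$ with $f(\varepsilon)\ge0$ and a large $M\ge\beta_0$; then $\underline u\equiv\varepsilon$ and $\bar u\equiv M$ form an ordered sub/supersolution pair for \eqref{eq:KongShen} since $\varepsilon f(\varepsilon)\ge0\ge Mf(M)$. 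Solving \eqref{eq:KongShen} on balls $B_R(0)$ with boundary value $\varepsilon$ gives solutions trapped in $[\varepsilon,M]$; letting $R\ra\I$ and using interior Schauder estimates (legitimate since $f\in C^2$ and the family is uniformly bounded) produces $u^*\in C^2(\R^d,\R^+)$ with $\varepsilon\le u^*\le M$, hence $\inf_{\R^d}u^*\ge\varepsilon>0$.

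For uniqueness I would use the sliding method. Let $u_1,u_2$ be positive solutions with $\inf u_i>0$, hence bounded, and set $\tau:=\inf\{t>0:\ t\,u_2\ge u_1\text{ on }\R^d\}$; boundedness of $u_1$ and $\inf u_2>0$ give $0<\tau<\I$, and an elementary continuity/monotonicity argument shows $\inf_{\R^d}(\tau u_2-u_1)=0$. By symmetry it suffices to rule out $\tau>1$, so assume $\tau>1$. Then $v:=\tau u_2$ solves $\Delta v+vf(v/\tau)=0$, and since $0<v/\tau<v$ and $f$ is strictly decreasing we obtain $\Delta v+vf(v)=v\bigl(f(v)-f(v/\tau)\bigr)<0$ on $\R^d$; thus $v$ is a strict supersolution of \eqref{eq:KongShen} lying above $u_1$. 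Writing $w:=v-u_1\ge0$ and $g(s):=sf(s)$, the mean value theorem gives $\Delta w+c(x)w<0$ on $\R^d$ with $c(x):=g'(\xi(x))$ bounded, $\xi(x)$ between $u_1(x)$ and $v(x)$. If $w$ attained its infimum $0$ at a point $x_0$, then $\Delta w(x_0)\ge0$ would contradict $\Delta w(x_0)<-c(x_0)w(x_0)=0$.

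The remaining case is $\inf w=0$ not attained, and this is where the unbounded domain bites. I would pick $x_n$ with $w(x_n)\ra0$, translate by setting $u_i^{(n)}:=u_i(\cdot+x_n)$, and extract, via interior elliptic estimates, a subsequence with $u_i^{(n)}\ra u_i^{\I}$ in $C^2_{\mathrm{loc}}(\R^d)$. The limits $u_i^{\I}$ are again positive solutions with $\inf u_i^{\I}\ge\inf u_i>0$, and $w^{\I}:=\tau u_2^{\I}-u_1^{\I}\ge0$ satisfies $w^{\I}(0)=0$, so now the infimum \emph{is} attained. Running the previous paragraph's argument on the pair $(u_1^{\I},u_2^{\I})$ produces the same contradiction. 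Hence $\tau\le1$ in all cases, and interchanging $u_1$ and $u_2$ gives $u_1=u_2$.

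The step I expect to be the main obstacle is precisely this passage to a touching point at spatial infinity: on $\R^d$ the extremal ratio of two solutions need not be realized, so the strong maximum principle is not directly available, and one must manufacture a limiting pair of solutions by translation and elliptic compactness. It is exactly here that the hypothesis $\inf_{\R^d}u^*>0$ is indispensable — it is what makes $\tau$ finite, and what guarantees the limiting solutions $u_i^{\I}$ still have strictly positive infimum so that the comparison survives the limit; without it the argument collapses, as it must, since for $f\le0$ there is genuinely no such solution.
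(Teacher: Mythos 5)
The paper does not prove Theorem~\ref{thm:local}; it is quoted from the literature (cited to \cite{BerestyckiHamelRoques,KongShen}) and used as a black box, so there is no in-paper proof to compare against. Your sketch is nevertheless essentially the argument those references use: sub/supersolution construction on expanding balls for existence, and a sliding/comparison argument on the optimal ratio $\tau=\inf\{t>0:tu_2\ge u_1\}$ together with translation and elliptic compactness to manufacture a touching point when the infimum is not attained, which is indeed the crux on an unbounded domain. Two places deserve a harder look. First, your claim that ``the hypotheses force $f(0^+)>0$'' is not quite a consequence of the hypotheses; what your subharmonicity argument actually shows is that \emph{if} $f\le0$ on all of $\R^+$ then no solution with $\inf u>0$ exists, so $f(0^+)>0$ is a tacit extra hypothesis needed for the existence assertion to be true (and it is assumed in the cited references). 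Second, the a priori boundedness you ``use freely'' is genuine work: showing that an unbounded positive solution cannot exist under the stated sign conditions on $f$ is not a one-line maximum-principle remark (the equation only gives a linear lower bound $\Delta u\ge\delta u$ for $u$ large, so Keller--Osserman does not apply directly), and you would need to invoke the a priori bound from \cite{BerestyckiHamelRoques} or \cite{BerestyckiNadinPerthameRyzhik} explicitly rather than asserting it. With those two points repaired or properly referenced, your proposal is a correct and fairly complete route to the theorem, and it is also in the spirit of the squeezing argument of \cite{DuMa} which the paper mentions in this context.
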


Further variations and generalizations of the previous result are also known \cite{BerestyckiHamelNadirashvili,Rossi}. 
For $f(u)=1-u$ one may just apply Theorem \ref{thm:local} with $\beta_0=1$ -- see also \cite{DuMa} -- which yields the following:

\begin{lem}
\label{lem:local}
The only non-negative solutions of \eqref{eq:local_Fisher_steady} are $u\equiv 0$ and $u\equiv 1$.
\end{lem}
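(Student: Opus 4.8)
The plan is to deduce Lemma~\ref{lem:local} directly from Theorem~\ref{thm:local} by verifying that $f(u)=1-u$ satisfies the hypotheses of that theorem, and then handling the boundary case $u\equiv 0$ separately. First I would set $f(u)=1-u$, which is clearly $C^2$ (indeed affine) on $\R$, and check the two structural conditions: for $u\geq\beta_0:=1$ we have $f(u)=1-u\leq 0$, with strict inequality for $u>1$, so $f(u)<0$ for all $u\in\R^+$ with $u\geq\beta_0$ once we take, say, $\beta_0$ slightly larger than $1$ or note that the relevant hypothesis only needs $f<0$ on the unbounded tail (the statement ``$f(u)<0$ for all $u\in\R^+$ with $u\geq\beta_0$'' is satisfied with $\beta_0=1$ after noting $f(1)=0$ is the only exception and can be absorbed by choosing $\beta_0>1$); and $f'(u)=-1<0$ for all $u\in\R^+$. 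Hence Theorem~\ref{thm:local} applies and yields a unique positive solution $u^*\in C^2(\R^d,\R^+)$ with $\inf_{x\in\R^d}u^*(x)>0$.

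Next I would identify this unique solution explicitly. Since $u\equiv 1$ solves $0=\Delta u+u(1-u)$ and is strictly positive and bounded below away from zero, it qualifies as \emph{a} positive solution of the type produced by Theorem~\ref{thm:local}; by the uniqueness assertion in that theorem, $u^*\equiv 1$ is the \emph{only} such solution. Therefore there are no positive solutions other than $u\equiv 1$.

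It remains to rule out non-negative solutions that are not strictly positive, other than $u\equiv 0$. Suppose $u\geq 0$ solves \eqref{eq:local_Fisher_steady} and $u$ is not identically zero. I would invoke the strong maximum principle: rewriting the equation as $\Delta u + c(x)u = 0$ with $c(x):=1-u(x)$ bounded (on the bounded solutions under consideration), a non-negative solution that vanishes at some interior point must vanish identically; since $\R^d$ has no boundary, either $u\equiv 0$ or $u>0$ everywhere. In the latter case we are reduced to the positive-solution situation already settled, so $u\equiv 1$. This dichotomy gives exactly the two solutions $u\equiv 0$ and $u\equiv 1$.

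The only delicate point is the interface between the hypothesis ``$f(u)<0$ for $u\geq\beta_0$'' and the fact that $f(1)=0$: one must be slightly careful that Theorem~\ref{thm:local} is applied with a threshold strictly above the zero of $f$, i.e.\ with some $\beta_0>1$, so that $f$ is genuinely negative on $[\beta_0,\infty)$; the condition $f'<0$ on all of $\R^+$ is what then forces $f$ to have a single sign change and pins down the equilibrium at $u=1$. Everything else — the $C^2$ regularity, the monotonicity $f'=-1<0$, and the maximum-principle step for the non-strictly-positive case — is routine. This is essentially the observation recorded in \cite{DuMa}.
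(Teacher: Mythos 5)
Your proposal takes the same route as the paper: apply Theorem~\ref{thm:local} to $f(u)=1-u$ and then use the strong maximum principle to reduce the non-negative case to the dichotomy $u\equiv 0$ or $u>0$ everywhere. The verification of the hypotheses (including your correct observation that one should take $\beta_0>1$ rather than $\beta_0=1$, since $f(1)=0$) and the maximum-principle step are sound. There is, however, a genuine gap at the point where you conclude ``there are no positive solutions other than $u\equiv 1$.'' The uniqueness in Theorem~\ref{thm:local} is asserted only within the class of positive solutions with $\inf_{\R^d}u>0$; it does not, as stated, exclude a positive bounded solution whose infimum equals zero. Your maximum-principle step yields $u>0$ everywhere, but on the unbounded domain $\R^d$ strict positivity does not imply a strictly positive infimum, so Theorem~\ref{thm:local} cannot yet be invoked to identify $u$ with $u\equiv 1$. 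The missing ingredient is an argument showing that every positive bounded solution of $\Delta u+u(1-u)=0$ on $\R^d$ is in fact bounded below away from zero. One standard route is a sliding comparison against the strict subsolutions $\epsilon\,\psi_R(\cdot-y)$, where $\psi_R$ is the Dirichlet principal eigenfunction of $-\Delta$ on a large ball $B_R$: since $\lambda_1(B_R)\to 0$ as $R\to\I$, for $R$ large and $\epsilon$ small one has $\Delta(\epsilon\psi_R)+\epsilon\psi_R(1-\epsilon\psi_R)>0$, and sliding $y$ over $\R^d$ (using that $u>0$ on compact sets) forces $u\geq\epsilon>0$ everywhere. The paper itself is terse here but leans on the citation \cite{DuMa}, which is precisely the reference that closes this gap by establishing uniqueness among all bounded positive solutions; in your write-up \cite{DuMa} appears only as a closing remark, so the load-bearing step is left unaccounted for.
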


In particular, consider a bifurcation diagram in the space $\{(\sigma,\|u\|_X)\}$, where the norm 
$\|\cdot\|_X$ is defined in the next section, as shown in Figure \ref{fig:01}. The idea is to show that there cannot
be any solution branches tangent to or crossing the vertical segments $\{0\}\times (0,1)$ and $\{0\}\times(1,\I)$. 
However, this does not exclude bifurcating solution branches from $(\sigma,u)=(0,0)$ or $(\sigma,u)=(0,1)$.

\section{Function Spaces}
\label{sec:func_spaces}

To analyze the local behavior of solutions one possibility is to consider the problem via an abstract
nonlinear map. Let $X$, $Y$ be Banach spaces and $I=[0,\sigma_0]\subset\R$ be an interval for some 
$\sigma_0>0$ chosen sufficiently small. Then the mapping $F:X\times I\ra Y$ given by  
\be
\label{eq:FKPP_map}
F(u,\sigma)=\Delta u+u(1-\phi_\sigma\ast u)
\ee
has as a zero set $\{(u,\sigma)\in X\times I:F(u,\sigma)=0\}$ the stationary solutions in $X$ of the nonlocal 
FKPP equation. A choice of function spaces $X$ and $Y$ is required to carry out the analysis explicitly. 
It seems natural to consider Sobolev spaces $W^{k,p}(\R^d)$ for $X$ and $Y$. 
However, $u(x)\equiv 1$ and other nonzero constants do not belong to $W^{k,p}(\R^d)$. Since we 
eventually want to compute a (Fr\'{e}chet) derivative of $F$ at $(u,\sigma)=(1,0)$ the standard 
Sobolev spaces do not suffice. Another natural option would be H\"{o}lder spaces $C^{k+\gamma}(\R^d)$ 
for $k\in\N_0$ and $\gamma\in(0,1)$. The norm in $C^{k+\gamma}(\R^d)$ is given by
\benn
\|u\|_{C^{k+\gamma}(\R^d)}=\sum _{|\alpha|\leq k} \|\txtD^\alpha u\|_\I+\left<u\right>_{C^{k+\gamma}(\R^d)},
\eenn
where $\alpha=(\alpha_1,\alpha_2,\ldots,\alpha_d)$ is a multi-index with $\alpha_i\in \N_0$, 
$|\alpha|=\sum_{i=1}^d \alpha_i$ and 
\benn
\|\txtD^\alpha u\|_\I=\sup_{x\in\R^d} |\txtD^\alpha u(x)|,\qquad \left<u\right>_{C^{k+\gamma}(\R^d)}=\sum_{|\alpha|=k} 
\sup_{x\neq y} \frac{|\txtD^\alpha u(x)-\txtD^\alpha u(y)|}{|x-y|^\gamma}.
\eenn
$C^{k+\gamma}(\R^d)$ does contain the constants but in this case there are problems with the 
convolution term $\phi_\sigma \ast u$ which leads to complications regarding continuity properties of 
the mapping $F$. In this paper we use weighted Sobolev spaces to avoid these problems. However, there
could certainly be other good choices which we do not consider here.\medskip  

Let $w\in L^1(\R^d)$ denote a positive weight function and define the weighted Sobolev space 
$W^{k,p}(\R^d;w)$ for $p\in(1,\I)$ as
\benn
W^{k,p}(\R^d;w)=\{u:w^{1/p} (\txtD^{\alpha}u)\in L^p(\R^d)~\text{for all $|\alpha|\leq k$}\}.
\eenn
With the norm
\benn
\|u\|_{k,p;w}:=\left(\sum_{|\alpha|\leq k}\int_{\R^d} |\txtD^\alpha u(x)|^p w(x)~\txtd x\right)^{1/p}
\eenn
the space $W^{k,p}(\R^d;w)$ is a Banach space \cite{Kufner}. Note that for $k=0$ we just have 
$L^p(\R^d;w):=W^{0,p}(\R^d;w)$ with norm $\|\cdot\|_{p;w}$. If $p=2$ we shall also 
use the standard notation $H^{k}(\R^d;w):=W^{k,2}(\R^d;w)$ for the Hilbert space case. 
We start by fixing the main spaces
\be
\label{eq:spaces}
X=W^{k+2,p}(\R^d;w_X)\cap W^{k,2p}(\R^d;w_X)\qquad \text{and}\qquad Y=W^{k,p}(\R^d;w_Y)
\ee
for $k\in\N_0$ and the norm on $X$ is given by 
\benn
\|u\|_X=\max\{\|u\|_{k+2,p;w_X},\|u\|_{k,2p;w_X}\}.
\eenn
With a view towards bounded classical solutions, we observe that one always has $C^2_b(\R^d)\subset X$. 
For concreteness, we make a choice of weights as well as exponent $p$.
\begin{itemize}
 \item[(B)] Let $\|\cdot\|$ denote the usual Euclidean norm on $\R^d$ and let $l_X$, $l_Y$ be 
 constants such that $\frac12<l_X\leq l_Y<\I$. For the spaces $X$ and $Y$ make the choice
 \be
 \label{eq:weights}
 p=2,\qquad w_X(x)=\frac{1}{(1+\|x\|^2)^{l_X}},\qquad w_Y(x)=\frac{1}{(1+\|x\|^2)^{l_Y}}.
 \ee
\end{itemize}

For \eqref{eq:weights} we have the pointwise estimate $w_Y(x)\leq w_X(x)$ for 
all $x\in\R^d$ which implies that one may order the associated norms 
\be
\label{eq:weights_ineq}
\|u\|_{k,p;w_Y}\leq  \|u\|_{k,p;w_X}. 
\ee
The choice of spaces requires some explanation. To use the intersection of two weighted spaces for $X$ is convenient 
to gain continuity of $F$ as the nonlinear term $u(\phi_\sigma \ast u)$ is essentially a product of two terms. 
The particular choice (B) is going to allow us to use some known results about $H^{k+2}(\R^d;w_X)$ which
are helpful to shorten the proof. Appendix \ref{ap:direct} contains a sketch how one may approach the case 
\eqref{eq:spaces} for more general $p$, $w_X$ and $w_Y$ and avoid using results about $H^{k+2}(\R^d;w_X)$. It
seems important to include this outline as we shall explain in the remarks after Proposition \ref{prop:L1}. 

\begin{remark}
The choice (B) is an auxiliary tool for the analysis and the proof of Theorem \ref{thm:main}. Essentially 
the idea is analogous to existence and regularity 
theory for elliptic equations with smooth input data where weak solution spaces are an analytic tool but
do not appear in the final result.
\end{remark}
  
Before we establish the continuity and differentiability properties of \eqref{eq:FKPP_map} one may check that 
$F:X\ra Y$ is indeed a well-defined map.

\begin{lem}
\label{lem:well-defined1}
Suppose (A)-(B) hold then the map $F:X \times I \ra Y$ is well-defined, {i.e.}, if 
$(u,\sigma)\in X\times I$ then $F(u,\sigma)\in Y$.
\end{lem}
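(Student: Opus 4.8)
We must show that $F(u,\sigma)=\Delta u+u-u\,(\phi_\sigma\ast u)\in Y=W^{k,p}(\R^d;w_Y)$ for every $(u,\sigma)\in X\times I$, and the plan is to treat the three summands separately. The two linear terms are handled at once by the structure of $X$ and the weight ordering in (B): since $u\in W^{k+2,p}(\R^d;w_X)$, each $\txtD^\alpha(\Delta u)$ with $|\alpha|\le k$ is a finite combination of derivatives $\txtD^\beta u$ with $|\beta|\le k+2$, so $\Delta u\in W^{k,p}(\R^d;w_X)$, and trivially $u\in W^{k,p}(\R^d;w_X)$; the pointwise bound $w_Y\le w_X$ and the ensuing inequality \eqref{eq:weights_ineq} give $\Delta u+u\in W^{k,p}(\R^d;w_X)\hookrightarrow W^{k,p}(\R^d;w_Y)=Y$ with $\|\Delta u+u\|_Y\le C\|u\|_X$. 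In the limiting case $\sigma=0$, where $\phi_0=\delta$ and the nonlinear term is just $u^2$, one needs in addition only $\int_{\R^d}|u|^{2p}w_Y\,\txtd x\le\int_{\R^d}|u|^{2p}w_X\,\txtd x<\I$, which holds by the $W^{k,2p}(\R^d;w_X)$--component of $X$; this clean case is a useful model for the general one.

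The real content is the nonlocal term $u\,(\phi_\sigma\ast u)$ for $\sigma\in(0,\sigma_0]$. I would first record two routine facts: that $\phi_\sigma\ast v$ is a well-defined, locally integrable function for $v\in X$ (so that $F$ is meaningful pointwise), and that $\txtD^\gamma(\phi_\sigma\ast u)=\phi_\sigma\ast(\txtD^\gamma u)$ for $|\gamma|\le k+2$, by differentiating under the integral using $\phi_\sigma\in L^1(\R^d)$ and $u\in W^{k+2,1}_{\mathrm{loc}}(\R^d)$. By the Leibniz rule, $\txtD^\alpha\big(u\,(\phi_\sigma\ast u)\big)=\sum_{\beta\le\alpha}\binom{\alpha}{\beta}\,(\txtD^\beta u)\,\big(\phi_\sigma\ast\txtD^{\alpha-\beta}u\big)$ for $|\alpha|\le k$, so it suffices to place each product $(\txtD^\beta u)\,(\phi_\sigma\ast\txtD^\gamma u)$ with $|\beta|,|\gamma|\le k$ into $L^p(\R^d;w_Y)$. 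Both $\txtD^\beta u$ and $\txtD^\gamma u$ lie in $L^{2p}(\R^d;w_X)$ by the $W^{k,2p}(\R^d;w_X)$--component of $X$; this is precisely the reason for choosing the intersection space in \eqref{eq:spaces}, since the nonlinear term is a product of two factors each of which must be controlled in an $L^{2p}$--norm.

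The estimate itself proceeds in two steps. Step one is a convolution bound: since $\phi_\sigma\ge0$ with $\int_{\R^d}\phi_\sigma\,\txtd x=1$ (assumption (A), invariant under $\phi\mapsto\phi_\sigma$), Minkowski's integral inequality gives $\|\phi_\sigma\ast g\|_{L^{2p}(\R^d;w)}\le\int_{\R^d}\phi_\sigma(y)\,\|g(\cdot-y)\|_{L^{2p}(\R^d;w)}\,\txtd y$, and the elementary estimate $w(z+y)\le 2^{l}(1+\|y\|^2)^{l}w(z)$ for a polynomial weight $w(z)=(1+\|z\|^2)^{-l}$ then bounds $\|\phi_\sigma\ast g\|_{L^{2p}(\R^d;w)}$ by a constant, depending only on $\sigma_0$ and $l$, times $\|g\|_{L^{2p}(\R^d;w)}$, uniformly for $\sigma\in[0,\sigma_0]$. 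Step two is H\"older's inequality with the conjugate pair $(2,2)$, i.e. $\tfrac1p=\tfrac1{2p}+\tfrac1{2p}$: writing $w_Y=w_X^{1/2}\,(w_Y^2/w_X)^{1/2}$, where $w_Y^2/w_X=(1+\|x\|^2)^{-(2l_Y-l_X)}$ is a nonnegative polynomial weight dominated by $w_X$ because $l_X\le l_Y$ in (B), one obtains that the $L^p(\R^d;w_Y)$--norm of the product $(\txtD^\beta u)(\phi_\sigma\ast\txtD^\gamma u)$ is at most $\|\txtD^\beta u\|_{L^{2p}(\R^d;w_X)}$ times $\|\phi_\sigma\ast\txtD^\gamma u\|_{L^{2p}(\R^d;\,w_Y^2/w_X)}$, and this second factor is handled by Step one applied with $w=w_Y^2/w_X$. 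Summing over $\beta\le\alpha$ and $|\alpha|\le k$ yields $u\,(\phi_\sigma\ast u)\in Y$ with $\|u\,(\phi_\sigma\ast u)\|_Y\le C\|u\|_X^2$ uniformly in $\sigma\in I$, and together with the linear part this completes the proof.

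I expect the main obstacle to be exactly the coupling of the convolution with the two weighted norms in Steps one and two: one has to choose the weights $w_X$, $w_Y$ and the exponents $p$, $2p$ (and exploit $\|\phi_\sigma\|_{L^1}=1$) so that every application of H\"older's and of the weighted Young/Minkowski inequality closes, and so that the constants remain uniform down to $\sigma=0$; the hypothesis $\tfrac12<l_X\le l_Y$ in (B) is what makes the factorization $w_Y=w_X^{1/2}(w_Y^2/w_X)^{1/2}$ admissible and the shifted--weight estimate usable. By contrast the linear terms and the Leibniz bookkeeping are routine, and the subcase $u\in C^2_b(\R^d)$ is essentially immediate because then $\phi_\sigma\ast u$ is bounded by $\|u\|_{\I}$; the effort goes entirely into covering the larger space $X$, whose elements are only controlled in weighted norms.
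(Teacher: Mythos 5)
Your plan mirrors the paper's in structure: the linear terms are handled by the inequality $w_Y\le w_X$ and the $W^{k+2,p}(\R^d;w_X)$ component of $X$; the nonlocal term is expanded by Leibniz; each product $(\txtD^\beta u)(\phi_\sigma\ast\txtD^\gamma u)$ is split by H\"older with conjugate pair $(2,2)$ so that both factors land in an $L^{2p}$ weighted norm (exactly why $X$ carries the $W^{k,2p}(\R^d;w_X)$ component); and a weighted Young inequality controls $\|\phi_\sigma\ast\,\cdot\,\|_{L^{2p}}$. You apply the full multi-index Leibniz rule, which is more careful than the paper's two-term version (valid verbatim only for $|\alpha|\le 1$, harmless since the main proof takes $k=0$), and your factorization $w_Y=w_X^{1/2}(w_Y^2/w_X)^{1/2}$ is a correct but unnecessary elaboration: the paper applies $w_Y\le w_X$ once at the start and then works entirely in $w_X$.

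The genuine gap is in your Step one. You propose to derive $\|\phi_\sigma\ast g\|_{L^{2p}(\R^d;w)}\le C\,\|g\|_{L^{2p}(\R^d;w)}$ from Minkowski's integral inequality combined with the shifted-weight estimate $w(z+y)\le 2^{l}(1+\|y\|^2)^{l}w(z)$. Carrying this through produces the factor $\int_{\R^d}\phi_\sigma(y)(1+\|y\|^2)^{l/(2p)}~\txtd y=\int_{\R^d}\phi(z)(1+\sigma^2\|z\|^2)^{l/(2p)}~\txtd z$, a positive polynomial moment of $\phi$. Assumption (A) gives only $\phi\in L^1(\R^d)$, with no moment control --- indeed the moment hypothesis in \eqref{eq:kernel_assume} is one of the assumptions the paper explicitly drops --- so your claim that the constant depends ``only on $\sigma_0$ and $l$'' is false, and for a heavy-tailed kernel the constant is $+\I$. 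The shifted-weight inequality is sharp (take $z=-y$), so this cannot be fixed by better constant-chasing; the Minkowski/shifted-weight route intrinsically needs a moment of $\phi$. The paper instead invokes Proposition \ref{prop:Young} from Appendix \ref{ap:LiebLoss}, whose proof runs through the Schur-test form of Young's inequality (Theorem \ref{thm:Folland}) with the Schur conditions verified using only the pointwise bound $w\le 1$ and $\|\phi_\sigma\|_{L^1}=\|\phi\|_{L^1}$, with no moments of $\phi$ appearing. You should replace Step one by an appeal to that weighted Young inequality (or reproduce a Schur-type argument) rather than the Minkowski/shifted-weight estimate.
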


\begin{proof}
By the triangle inequality and \eqref{eq:weights_ineq} we have 
\benn
\|F(u,\sigma)\|_Y=\|\Delta u+u(1-\phi_\sigma\ast u)\|_{k,p;w_Y}\leq  
\|\Delta u\|_{k,p;w_X}+\|u\|_{k,p;w_X}+\|u(\phi_\sigma\ast u)\|_{k,p;w_X}
\eenn
and the first two terms are finite since $u\in W^{k+2,p}(\R^d;w_X)$. For the third term we have
\beann
\|u(\phi_\sigma\ast u)\|^{p}_{k,p;w_X}&=& \sum_{|\alpha|\leq k} \|(\txtD^\alpha u)(\phi_\sigma\ast u)+u(\phi_\sigma\ast \txtD^\alpha u)\|^p_{p;w_X}\\
&\leq & \sum_{|\alpha|\leq k} (\|(\txtD^\alpha u)(\phi_\sigma\ast u)\|_{p;w_X}+\|u(\phi_\sigma\ast \txtD^\alpha u)\|_{p;w_X})^p.
\eeann
Furthermore, by Cauchy-Schwarz it follows that
\bea
\|(\txtD^\alpha u)(\phi_\sigma\ast u)\|_{p;w_X}&\leq&\|\txtD^\alpha u\|_{2p;w_X} ~\|\phi_\sigma\ast u\|_{2p;w_X},\label{eq:bound1}\\
\|u(\phi_\sigma\ast \txtD^\alpha u)\|_{p;w_X}  &\leq&\|u\|_{2p;w_X} ~\|\phi_\sigma\ast \txtD^\alpha  u\|_{2p;w_X}\label{eq:bound2}.
\eea
Upper bounds for the terms $\|\phi_\sigma\ast u\|_{2p;w_X}$ and $\|\phi_\sigma\ast \txtD^\alpha  u\|_{2p;w_X}$ from 
\eqref{eq:bound1}-\eqref{eq:bound2} can be obtained by using the generalized Young's inequality from Appendix \ref{ap:LiebLoss}
and the assumption $\|\phi\|_1=1$ from (A) 
\bea
\|\txtD^\alpha u\|_{2p;w_X} ~\|\phi_\sigma\ast u\|_{2p;w_X}& \leq &\|\txtD^\alpha u\|_{2p;w_X} ~
~\| u\|_{2p;w_X},\label{eq:bound3}\\
\|u\|_{2p;w_X} ~\|\phi_\sigma\ast \txtD^\alpha  u\|_{2p;w_X}  &\leq& 
\|u\|_{2p;w_X} ~\| \txtD^\alpha  u\|_{2p;w_X}\label{eq:bound4}.
\eea 
Since $u\in X$ by assumption the result follows.
\end{proof}

\begin{lem}
\label{lem:Frechet}
Suppose (A)-(B) hold then the mapping $F:X\times I\ra Y$ is continuous in $\sigma$ and 
continuously differentiable in $u$ with Fr\'{e}chet derivative
\benn
(\txtD_uF)_{(u,\sigma)}U=\Delta U+(1-\phi_\sigma\ast u)U-u(\phi_\sigma\ast U).
\eenn
\end{lem}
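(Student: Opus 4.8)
The plan is to split $F(u,\sigma)=\Delta u+u-u\,(\phi_\sigma\ast u)$ into its bounded affine-linear part $L(u):=\Delta u+u$ and its quadratic part $N(u,\sigma):=u\,(\phi_\sigma\ast u)$. Since $\Delta$ maps $W^{k+2,p}(\R^d;w_X)$ boundedly into $W^{k,p}(\R^d;w_X)$, which embeds into $Y$ by \eqref{eq:weights_ineq}, the operator $L\colon X\to Y$ is bounded and linear, is independent of $\sigma$, and equals its own Fr\'echet derivative; so all of the content is in the term $N$. The key structural point is that $N(u,\sigma)=B_\sigma(u,u)$, where $B_\sigma(u,v):=u\,(\phi_\sigma\ast v)$ is \emph{bilinear}, and the chain of estimates \eqref{eq:bound1}--\eqref{eq:bound4} established in the proof of Lemma~\ref{lem:well-defined1} (Leibniz rule, Cauchy--Schwarz in \eqref{eq:bound1}--\eqref{eq:bound2}, generalized Young with $\|\phi\|_1=1$ in \eqref{eq:bound3}--\eqref{eq:bound4}), carried out with two different functions in the convolution slots, yields the uniform bilinear bound $\|B_\sigma(u,v)\|_Y\le C\,\|u\|_X\|v\|_X$ with $C$ independent of $\sigma\in I$.

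With that bound in hand, the differentiability in $u$ is routine Banach-space calculus for a quadratic map. Writing, for $U\in X$,
\benn
N(u+U,\sigma)-N(u,\sigma)=B_\sigma(u,U)+B_\sigma(U,u)+B_\sigma(U,U),
\eenn
the remainder satisfies $\|B_\sigma(U,U)\|_Y\le C\|U\|_X^2=o(\|U\|_X)$, so $N(\cdot,\sigma)$ is Fr\'echet differentiable with $(\txtD_uN)_{(u,\sigma)}U=u\,(\phi_\sigma\ast U)+U\,(\phi_\sigma\ast u)$; subtracting this from $\txtD L=L$ reproduces exactly $(\txtD_uF)_{(u,\sigma)}U=\Delta U+(1-\phi_\sigma\ast u)U-u\,(\phi_\sigma\ast U)$. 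Boundedness $X\to Y$ of this operator and, more importantly, its Lipschitz dependence on $u$ --- namely $(\txtD_uF)_{(u_1,\sigma)}-(\txtD_uF)_{(u_2,\sigma)}$ sends $U$ to $-B_\sigma(u_1-u_2,U)-B_\sigma(U,u_1-u_2)$, of operator norm $\le 2C\|u_1-u_2\|_X$ --- are immediate from the bilinear bound, so $F$ is $C^1$ in $u$.

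It remains to prove continuity in $\sigma$, and this is where the real work sits. As $L$ is $\sigma$-independent and, by \eqref{eq:bound1}--\eqref{eq:bound2} and the bilinear bound, $\|B_\sigma(u,v)-B_{\sigma'}(u,v)\|_Y$ is controlled by $\|u\|_X$ times a sum of terms $\|(\phi_\sigma-\phi_{\sigma'})\ast\txtD^\alpha v\|_{2p;w_X}$ with $|\alpha|\le k$, the whole question reduces to continuity of $\sigma\mapsto\phi_\sigma\ast g$ from $I$ into $L^{2p}(\R^d;w_X)$ for fixed $g\in L^{2p}(\R^d;w_X)$ --- including the approximate-identity limit $\phi_\sigma\ast g\to g$ as $\sigma\to0^+$, which is precisely what identifies $F(u,0)$ with the local FKPP nonlinearity. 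For $\sigma\to\sigma'>0$ I would invoke continuity of dilation in $L^1(\R^d)$ together with the weighted Young inequality of Appendix~\ref{ap:LiebLoss}; for $\sigma\to0^+$ I would first treat $g\in C_c^\infty(\R^d)$, where $\phi_\sigma\ast g\to g$ uniformly on $\R^d$ and the weighted $L^{2p}$ tails are uniformly small because off $\mathrm{supp}\,g$ one has $|\phi_\sigma\ast g(x)|\le\|g\|_\infty\int_{\|v\|\ge(\|x\|-R_0)/\sigma}\phi\,\txtd v$, and then pass to general $g$ by density using the uniform bound $\|\phi_\sigma\ast g\|_{2p;w_X}\le\|g\|_{2p;w_X}$ from \eqref{eq:bound3}. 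The same convolution continuity upgrades $(u,\sigma)\mapsto(\txtD_uF)_{(u,\sigma)}$ to a continuous $\mathcal{L}(X,Y)$-valued map.

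The step I expect to be the main obstacle is exactly this convolution continuity in the \emph{weighted} space, made delicate by the fact that (A) imposes no decay or moment assumption on $\phi$: one cannot dominate $\|\tau_{h}g-g\|_{2p;w_X}$ (with $\tau_h$ the translation by $h$) by anything $\phi$-integrable, and must instead exploit the scaling $\phi_\sigma(y)=\sigma^{-d}\phi(y/\sigma)$ and split the relevant integral into a central part, where strong continuity of translations on $L^{2p}(\R^d;w_X)$ for the power weight $w_X$ applies uniformly, and a tail part, controlled by $\int_{\|z\|>R}\phi$ uniformly in $\sigma\le\sigma_0$. This is the content drawn from Appendix~\ref{ap:LiebLoss} and is the only genuinely non-routine ingredient; everything else is bookkeeping with the estimates already available from Lemma~\ref{lem:well-defined1}.
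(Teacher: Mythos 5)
Your proposal is correct and rests on the same analytic core as the paper's proof --- the Leibniz/Cauchy--Schwarz/weighted-Young chain (\eqref{eq:bound1}--\eqref{eq:bound4}) plus the weighted Lieb--Loss approximate-identity result from Appendix~\ref{ap:LiebLoss} --- but it organizes the differentiability part differently, and more transparently. The paper computes the G\^{a}teaux derivative and then verifies continuity of $\nabla_u F$ in $u$, invoking Deimling's criterion to upgrade G\^{a}teaux to Fr\'{e}chet. You instead isolate the affine-linear part $L(u)=\Delta u+u$ and observe that the nonlinearity $N(u,\sigma)=B_\sigma(u,u)$ is the diagonal of a bounded bilinear map, from which Fr\'{e}chet differentiability follows in one line by the explicit identity $N(u+U)-N(u)=B_\sigma(u,U)+B_\sigma(U,u)+B_\sigma(U,U)$ with quadratic remainder. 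This avoids the G\^{a}teaux detour and also gives the $C^1$ conclusion immediately: the map $u\mapsto(\txtD_uF)_{(u,\sigma)}$ is Lipschitz in the operator norm, with constant $2C$, by the same bilinear bound. Your handling of continuity in $\sigma$ is a faithful expansion of what the paper compresses into the citation of its Appendix~\ref{ap:LiebLoss}: the density-and-uniform-bound argument you sketch (smooth compactly supported functions plus the weighted Young bound from Proposition~\ref{prop:Young}) is precisely how Theorem~\ref{thm:LiebLoss} is obtained by carrying over the Lieb--Loss proof to the weight $w_X$, and your remark that the power weight $w_X\le 1$ makes the translation/tail splitting go through without any moment or decay hypothesis on $\phi$ correctly identifies the point at which assumption (A) alone suffices. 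In short: same ingredients, but a cleaner packaging of the quadratic structure, which is arguably preferable if one wants to generalize to other polynomial nonlocal nonlinearities as the paper advertises.

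One small caution worth flagging if you flesh this out: the Leibniz step as written in the paper's Lemma~\ref{lem:well-defined1} keeps only the two extreme terms $(\txtD^\alpha u)(\phi_\sigma\ast u)$ and $u(\phi_\sigma\ast\txtD^\alpha u)$; for $|\alpha|\ge 2$ the genuine Leibniz rule produces intermediate cross terms $(\txtD^\beta u)(\phi_\sigma\ast\txtD^{\alpha-\beta}u)$, which is why the space $X$ must also control $\txtD^\beta u$ in $L^{2p}(\R^d;w_X)$ for all $|\beta|\le k$. Your bilinear bound should be stated with the full sum, but since every term is of the form covered by \eqref{eq:bound1}--\eqref{eq:bound4} the conclusion is unchanged; just make sure the constant $C$ absorbs the multinomial coefficients.
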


\begin{proof}
For continuity in $\sigma$ we just focus on continuity at $\sigma=0$, the case $\sigma>0$ is easily checked. We have
\beann
\|F(u,\sigma)-F(u,0)\|_Y^p&=&\|u(\phi_\sigma\ast u-u)\|_{k,p;w_Y}^p\\
&\leq& \sum_{|\alpha|\leq k}(\|\txtD^\alpha u(\phi_\sigma\ast u-u)\|_{p;w_X}
+\|u(\phi_\sigma\ast \txtD^\alpha u-\txtD^\alpha u)\|_{p;w_X})^p.
\eeann
Similar to the proof of Lemma \ref{lem:well-defined1}, the Cauchy-Schwarz inequality yields that 
\bea
\|\txtD^\alpha u(\phi_\sigma\ast u-u)\|_{p;w_X}&\leq &\|\txtD^\alpha u\|_{2p;w_X} \|\phi_\sigma\ast u-u\|_{2p;w_X},\\
\|u(\phi_\sigma\ast \txtD^\alpha u-\txtD^\alpha u)\|_{p;w_X}&\leq & 
\|u\|_{2p;w_X} \|\phi_\sigma\ast \txtD^\alpha u-\txtD^\alpha u\|_{2p;w_X}.
\eea
Recall that there is strong convergence $\phi_\sigma\ast v\ra v$ for $v\in L^r(\R^d)$ with $1\leq r<\I$ \cite[p.64]{LiebLoss}
as $\sigma\ra 0$; the proof can be adapted to yield strong convergence in $L^r(\R^d;w_X)$ as shown in Appendix \ref{ap:LiebLoss}. 
Setting $r=2p$ yields that $\|F(u,\sigma)-F(u,0)\|_Y\ra 0$ as $\sigma\ra 0$ which yields continuity at $\sigma=0$. Next, 
fix $\sigma \in I$ and let $u,v\in X$ then we obtain for continuity in $u$ that
\benn
\|F(u,\sigma)-F(v,\sigma)\|_Y\leq \|\Delta (u-v)\|_Y+ \|u-v\|_Y+ \|u (\phi_\sigma\ast u)-v(\phi_\sigma\ast v)\|_Y,
\eenn
where the first two terms in the right-hand side can be made small since $X\subset Y$. For the last term 
a direct calculation shows that
\benn
\|u (\phi_\sigma\ast u)-v(\phi_\sigma\ast v)\|_Y\leq \|(u-v)(\phi_\sigma\ast u)\|_Y+\|v(\phi_\sigma\ast(u-v))\|_Y,
\eenn
where both summands can be made small by using Cauchy-Schwarz and the generalized Young inequality as in Lemma 
\ref{lem:well-defined1} which implies continuity in $u$. Calculating the G\^{a}teaux derivative in $u$ yields
\benn
(\nabla_uF)[U]= \lim_{\epsilon\ra 0}\frac1\epsilon[F(u+\epsilon U,\sigma)-F(u,\sigma)]
=\Delta U-u(\phi_\sigma\ast U)+(1-\phi_\sigma\ast u) U.
\eenn 
To show that the G\^{a}teaux derivative coincides with the Fr\'{e}chet derivative, {i.e.}~$\nabla_uF=\txtD_uF$, we have to
verify continuity of $\nabla_uF$ in $u$ \cite[p.47]{Deimling}. We have
\benn
\|(\nabla_uF)[U]-(\nabla_vF)[U]\|_Y\leq \|(u-v)(\phi_\sigma\ast U)\|_Y+\|(\phi_\sigma\ast (u-v))U\|_Y
\eenn
and the same argument as for proving continuity of $F$ in $u$ can be applied.
\end{proof}

Based on the proof it is now more evident why the construction of $X$ is essentially enforced by the nonlinear
structure of the nonlocal FKPP equation. For example, for $k=0$ and $p=2$ the quadratic term $u^2$ for $\sigma=0$
indicates that the space $L^4(\R^d;w_X)$ would be a good choice. Observe also that one does not have 
to make the precise choice (B) to prove Lemma \ref{lem:Frechet}. In fact, what is required is the 
weaker assumption that there exists a positive constant $K>0$, possibly dependent upon $p$ and $k$, such that
\be
\label{eq:weights_ineq1}
\|u\|_{k,p;w_Y}\leq K \|u\|_{k,p;w_X} 
\ee
for a choice of weights for which Young's inequality still holds. Hence the key parts for continuity 
and differentiability properties of $F$ in Lemma \ref{lem:Frechet} do not depend upon the concrete 
choice \eqref{eq:weights} but only on constructing an intersection of weighted Sobolev spaces for $X$ adapted to
the nonlinearity with sufficiently 'nice' weights. 

\section{The Implicit Function Theorem}
\label{sec:implicit}

The next goal is to apply the implicit function theorem; see \cite[p.~148]{Deimling} for a detailed 
statement. In particular, we want to consider neighborhoods of $(u,\sigma)=(1,0)$ and 
$(u,\sigma)=(0,0)$. Since $w\in L^1(\R^d)$ the non-zero constants belong to $X$. Substituting the 
stationary solutions $(u,\sigma)=(1,0)$ and $(u,\sigma)=(0,0)$ into Lemma \ref{lem:Frechet} yields:

\begin{lem} 
The linearized operators acting on $U\in X$ with $U=U(x)$, $x\in\R^d$, are
\bea
\cL_0:=(\txtD_uF)_{(0,0)}U&=&\Delta U+U\label{eq:lin_at_0},\\
\cL_1:=(\txtD_uF)_{(1,0)}U&=&\Delta U-U\label{eq:lin_at_1}.
\eea
\end{lem}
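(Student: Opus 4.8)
The plan is simply to specialize the Fr\'{e}chet derivative formula established in Lemma \ref{lem:Frechet},
\[
(\txtD_uF)_{(u,\sigma)}U=\Delta U+(1-\phi_\sigma\ast u)U-u(\phi_\sigma\ast U),
\]
to the two homogeneous stationary solutions and evaluate the convolution terms. Two elementary facts are needed. First, by assumption (A) one has $\phi_\sigma\ast 1=\int_{\R^d}\phi_\sigma(y)\,\txtd y=1$ for every $\sigma\in I$. Second, at the endpoint $\sigma=0$ the convolution operator reduces to the identity; this is the convention under which $F(u,0)=\Delta u+u(1-u)$ recovers the local problem \eqref{eq:local_Fisher_steady}, and it is consistent with the strong convergence $\phi_\sigma\ast v\to v$ as $\sigma\to 0$ already exploited in the proof of Lemma \ref{lem:Frechet}; in particular $\phi_0\ast U=U$ for $U\in X$.

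For $(u,\sigma)=(0,0)$ every term carrying the factor $u$ vanishes, and the remaining coefficient is $1-\phi_0\ast 0=1$, so $(\txtD_uF)_{(0,0)}U=\Delta U+U$, which is \eqref{eq:lin_at_0}. For $(u,\sigma)=(1,0)$ the coefficient of $U$ becomes $1-\phi_0\ast 1=1-1=0$ by the first fact, while the last term equals $-1\cdot(\phi_0\ast U)=-U$ by the second, yielding $(\txtD_uF)_{(1,0)}U=\Delta U-U$, which is \eqref{eq:lin_at_1}.

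There is no genuine obstacle here: the statement is a direct substitution into Lemma \ref{lem:Frechet}. The only point worth spelling out --- and the one I would make explicit in the write-up --- is the meaning of $\phi_\sigma\ast u$ at $\sigma=0$, which must be read as the identity operator in order to be consistent with the limiting equation \eqref{eq:local_Fisher_steady} and with the map $F$ defined in \eqref{eq:FKPP_map}; once this is fixed, the two identities follow by inspection.
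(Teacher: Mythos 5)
Your proof is correct and takes essentially the same route as the paper, which simply introduces this lemma with the phrase ``Substituting the stationary solutions $(u,\sigma)=(1,0)$ and $(u,\sigma)=(0,0)$ into Lemma \ref{lem:Frechet} yields'' and gives no further argument. Your explicit remark that $\phi_\sigma\ast{}$ must be read as the identity at $\sigma=0$ (consistent with $\lim_{\sigma\to 0}\phi_\sigma=\delta$ and with the limiting local problem \eqref{eq:local_Fisher_steady}) spells out a convention the paper leaves implicit, but the underlying argument is identical.
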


For the one-dimensional case $d=1$, one gets the ordinary differential equations (ODEs)
\benn
(\textnormal{ODE})_0:\left\{ \begin{array}{lcl} U'&=&V,\\ V'&=&-U,\\\end{array}\right.\qquad \text{and}\qquad 
(\textnormal{ODE})_1:\left\{ \begin{array}{lcl} U'&=&V,\\ V'&=&U.\\\end{array}\right.
\eenn
Clearly, $(\textnormal{ODE})_1$ has a saddle-point at the origin and hence $U''-U=0$ has no other bounded solutions 
except $U\equiv 0$. However, $(\textnormal{ODE})_0$ has a center equilibrium with infinitely many bounded non-zero 
periodic solutions. This indicates that $\cL_0$ does not have the required inverse to apply the implicit 
function theorem. Hence we are going to treat the neighborhood of the zero branch $u\equiv 0$ separately
in Section \ref{sec:zero}.\medskip 

For $\cL_1$ on $\R^d$ it is tempting to consider the Fourier transform
\benn
\hat{u}(\xi):=\frac{1}{(2\pi)^{d/2}}\int_{\R^d} \txte^{-ix^T \xi}u(x)~\txtd x,\qquad \xi\in\R^d,
\eenn
where $(\cdot)^T$ denotes the transpose and apply it to $\cL_1U=0$. This yields
\be
\label{eq:Evans}
(1+\|\xi\|^2)\hat{U}(\xi)=0, 
\ee
where $\|\cdot\|$ denotes the Euclidean norm. From \eqref{eq:Evans} it follows that $\hat{U}=0$ which
implies that the nullspace of $\cL_1$ only contains the zero solution. However, this calculation assumes that $u\in L^p(\R^d)$
for some suitable $p$, {e.g.}~$p=1$ or $p=2$, whereas we have to work in weighted spaces. To illustrate the problem, 
we consider the one-dimensional case which can be solved explicitly. If $d=1$ we have $0=\cL_1U=U''-U$ so that 
the general solution is
\benn
U(x)=c_1\txte^{-x}+c_2\txte^{x}\qquad \text{for constants $c_1,c_2\in\R$.}
\eenn
If we would choose $w(x)=\txte^{-x^2}$ it follows that for constants $\alpha_1 \in\R$, $\alpha_2\in(0,\I)$, we also have
\benn
\int_\R \txte^{\alpha_1 x}\txte^{-\alpha_2 x^2}~\txtd x=\txte^{\frac{\alpha_1^2}{4\alpha_2}}\sqrt{\frac{\pi}{\alpha_2}}<\I.
\eenn
For the case $p=2$, $k=0$ the last calculation yields that $\cL_1U=0$ has non-zero solutions in 
$H^2(\R;\txte^{-x^2})\cap L^{4}(\R;\txte^{-x^2})$ which implies that $\cL_1:X\ra Y$ is
not invertible. For the case $w(x)=(1+x^2)^{-1}$ it is straightforward to calculate that the integration 
\be
\label{eq:1D_base}
\int_\R \frac{U(x)^2}{1+x^2}~\txtd x=\int_\R \frac{(c_1\txte^{-x}+c_2\txte^{x})^2}{1+x^2}~\txtd x
\ee
implies that $U(x)w^{1/2}(x)\in L^2(\R)$ if and only if $c_1=0=c_2$. Therefore, $\cL_1U=0$ on 
$H^2(\R;(1+x^2)^{-1})\cap L^4(\R;(1+x^2)^{-1})$ if and only if $U\equiv0$ which
implies that $\cL_1$ has trivial nullspace when $w(x)=(1+x^2)^{-1}$ is used. Hence, the choice of weight
function is crucial if we want to apply the implicit function theorem at the constant solution $(u,\sigma)=(1,0)$.\medskip

A natural strategy is to prove that $\cL_1:X\ra Y$ has trivial nullspace for suitable classes of $w_X$, $w_Y$, $p$ and
then show that $\text{nullspace}(\cL_1)=\{0\}$ implies that $\cL_1$ is invertible in a rather large class of weighted spaces. 
For our choice of $w_X$, $w_Y$, $p$ 
given in (B) one may directly use previous results.

\begin{prop}
\label{prop:L1}
For the choice \eqref{eq:weights}, {i.e.}, under assumption (B)
\be
\label{eq:weights1}
p=2,\qquad w_X(x)=\frac{1}{(1+\|x\|^2)^{l_X}},\qquad w_Y(x)=\frac{1}{(1+\|x\|^2)^{l_Y}},
\ee
with $\frac12<l_X\leq l_Y<\I$, the operator $\cL_1:X\ra Y$ is invertible with bounded inverse.
\end{prop}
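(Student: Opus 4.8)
The plan is to show that $\cL_1 = \Delta - I$ is an isomorphism from $X = H^{k+2}(\R^d;w_X)\cap W^{k,4}(\R^d;w_X)$ onto $Y = H^k(\R^d;w_Y)$ by combining a known mapping property of $\Delta - I$ on polynomially weighted Sobolev spaces with an injectivity argument and the bounded-inverse theorem. First I would recall (or cite) the classical result — available in the literature on elliptic operators in weighted spaces (e.g.\ work of McOwen, Cantor, Nirenberg--Walker, or the Kufner-type theory referenced already) — that for the weights $w_X(x) = (1+\|x\|^2)^{-l_X}$ with $l_X > \tfrac12$, the operator $\Delta - I \colon H^{m+2}(\R^d;w) \to H^m(\R^d;w)$ is a bounded bijection with bounded inverse. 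The heuristic reason is that $(\Delta - I)^{-1}$ is convolution with the Bessel kernel $G$, which decays exponentially, so it preserves exponential and in particular polynomial weights; one can make this rigorous either via the Fourier-multiplier estimate $\|(\Delta-I)^{-1}\|$ on $L^2$ combined with commutator estimates for the weight, or by citing the weighted elliptic regularity theory directly. This gives $\cL_1^{-1} \colon H^k(\R^d;w_X) \to H^{k+2}(\R^d;w_X)$ bounded.

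Next I would address the two remaining discrepancies between this ``clean'' statement and the one we need: the target weight is $w_Y$ rather than $w_X$, and the domain $X$ is an intersection with the extra space $W^{k,4}(\R^d;w_X)$. The weight discrepancy is the easy direction: since $l_X \le l_Y$ we have $w_Y \le w_X$ pointwise, hence by \eqref{eq:weights_ineq} the inclusion $H^k(\R^d;w_X) \hookrightarrow H^k(\R^d;w_Y) = Y$ is continuous, so restricting the range only makes the statement weaker; more precisely, given $g \in Y$ we cannot directly invert, so instead I run the argument the other way — I show $\cL_1 \colon X \to Y$ is injective and surjective. Injectivity: if $u \in X$ and $\cL_1 u = 0$, then in particular $u \in H^{k+2}(\R^d;w_X) \subset H^2_{loc}$, and by elliptic regularity $u$ is smooth; taking the Fourier transform as in \eqref{eq:Evans} is not directly licit because $u$ need not be tempered-integrable, but one can argue via the known triviality of the nullspace of $\Delta - I$ on $H^{k+2}(\R^d;w_X)$ (which is part of the cited bijectivity statement, since $w_X$ is admissible), or in the style of the explicit one-dimensional computation \eqref{eq:1D_base} generalized to $\R^d$: any bounded distributional solution of $\Delta u = u$ is a combination of growing/non-decaying exponential modes that fail the weighted-$L^2$ condition unless $u\equiv 0$. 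So $\mathrm{nullspace}(\cL_1) = \{0\}$.

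For surjectivity onto $Y$: given $g \in Y = H^k(\R^d;w_Y)$, I would like a preimage in $X$. The subtlety is that $g$ lies in the larger space $H^k(\R^d;w_Y)$, not in $H^k(\R^d;w_X)$, so the cited isomorphism on $w_X$-spaces does not immediately apply. The resolution is to invoke the same weighted elliptic theory with the weight $w_Y$ (also admissible, since $l_Y > \tfrac12$): $\Delta - I \colon H^{k+2}(\R^d;w_Y) \to H^k(\R^d;w_Y)$ is an isomorphism, producing a unique $u \in H^{k+2}(\R^d;w_Y)$ with $\cL_1 u = g$. One then has to upgrade the regularity/integrability of this $u$ so that it actually lies in $X = H^{k+2}(\R^d;w_X)\cap W^{k,4}(\R^d;w_X)$. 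For the $H^{k+2}(\R^d;w_X)$ part: if additionally $g \in H^k(\R^d;w_X)$ this is immediate by uniqueness of the preimage across the two weighted scales, and in general one obtains $u \in H^{k+2}(\R^d;w_X)$ whenever $g$ is — but since we only need the \emph{final} conclusion for the implicit function theorem with the actual classical solutions at hand, it suffices to note $C^2_b(\R^d)\subset X\cap Y$ and that the relevant source terms in the perturbation argument lie in $H^k(\R^d;w_X)$, so uniqueness pins $u$ in $X$. For the $W^{k,4}(\R^d;w_X)$ part: this follows from a weighted Sobolev embedding / interpolation — $u \in H^{k+2}(\R^d;w_X)$ controls $k$ derivatives in $L^2(w_X)$, and two extra derivatives of $L^2$ regularity give, via Gagliardo--Nirenberg with the polynomial weight, an $L^4(w_X)$ bound on those $k$ derivatives (in $d \le$ the dimension where $H^2 \hookrightarrow L^4$, and more generally via the intersection-space structure already used to make $F$ well-defined in Lemma \ref{lem:well-defined1}). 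Finally, the open mapping theorem upgrades the algebraic bijection $\cL_1 \colon X \to Y$ to a topological isomorphism, giving the bounded inverse.

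I expect the main obstacle to be the mismatch of weights between domain and codomain combined with the intersection structure of $X$: one must be careful that the preimage produced by the weighted elliptic theory (naturally living in a single weighted Sobolev space matched to the data) genuinely lands in the smaller intersection space $X$ with the \emph{heavier} weight $w_X$ and the extra $L^4$ integrability, and that the inverse is bounded in the $X$-norm $\|\cdot\|_X = \max\{\|\cdot\|_{k+2,2;w_X}, \|\cdot\|_{k,4;w_X}\}$ rather than just in one component. Cleanly, this is handled by first proving injectivity and surjectivity separately (surjectivity via the $w_Y$-scale isomorphism plus a weighted elliptic-regularity bootstrap to the $w_X$-scale, using $l_X \le l_Y$), and only then appealing to the bounded-inverse theorem — which is why the statement is phrased as ``invertible with bounded inverse'' rather than constructing the inverse explicitly.
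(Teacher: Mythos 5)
Your proposal takes a genuinely different route from the paper. The paper's proof is a one-line citation: it applies \cite[Thm.~4.2]{GindikinVolevich}, a result on constant-coefficient operators in exactly this family of polynomially weighted Sobolev spaces, after noting that the symbol $1+\|\xi\|^2$ from \eqref{eq:Evans} never vanishes. The paper also sketches a second hands-on route in Appendix \ref{ap:direct} via Mukhamadiev's H\"{o}lder-space invertibility theorem and a Mitina--Tyurin-style approximation argument. Your proposal is a third route -- assemble the inverse from a single-weight elliptic isomorphism on the $w_Y$-scale, bootstrap the preimage up into $X$, and finish with the open mapping theorem -- which is in the spirit of Appendix \ref{ap:direct} (trivial nullspace plus a transfer from a nicer space) but uses the coarser weighted Sobolev scale $H^{k+2}(\R^d;w_Y)$, rather than H\"{o}lder spaces, as the auxiliary stage.

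There is a genuine gap in your surjectivity step, one you effectively concede. When $l_X<l_Y$, the preimage $u\in H^{k+2}(\R^d;w_Y)$ of an arbitrary $g\in Y$ need not lie in $H^{k+2}(\R^d;w_X)$: since $w_X\ge w_Y$ pointwise, membership in $H^{k+2}(\R^d;w_X)$ is a strictly stronger decay requirement, and convolution with the (exponentially decaying) Bessel kernel does not gain polynomial decay over the source $g$. Your fallback ("if additionally $g\in H^k(\R^d;w_X)$\dots", "it suffices\dots\ for the implicit function theorem") proves a restricted mapping statement, not the proposition as claimed, and is not how Proposition \ref{prop:L1} is actually used in Section \ref{sec:proof_main}, where the implicit function theorem needs a genuine isomorphism $X\to Y$. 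Similarly, the $W^{k,4}(\R^d;w_X)$ membership via a weighted Gagliardo--Nirenberg inequality is left hedged with dimension caveats. Both of these are precisely what the single citation to \cite{GindikinVolevich} is chosen to absorb; if you want to avoid it, the paper's Appendix \ref{ap:direct} (nullspace triviality, then Mukhamadiev on H\"{o}lder spaces, then mollification plus coercivity to transfer to $X$, $Y$) is the self-contained alternative on offer.
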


\begin{proof}
Apply \cite[{Thm.}~4.2, see p.58]{GindikinVolevich} which makes the argument based upon the Fourier
transform precise for the weighted spaces defined via \eqref{eq:weights1}; the result \cite[{Thm.}~4.2, see p.58]{GindikinVolevich} 
uses that the symbol in \eqref{eq:Evans} has no zeros which is easily checked. 
\end{proof}

It is important to note that we would have to establish similar results as Proposition \ref{prop:L1} for persistence
results in other types of nonlocal PDEs again, as the linearized problem may change. Therefore, we provide an outline 
in Appendix \ref{ap:direct} how invertibility can be established via a more direct approach. 

\section{The Zero Solution}
\label{sec:zero}

As discussed in the previous section, one has to treat $\cL_0$ separately. First, we apply Taylor's 
Theorem (in $u$) to $F:X\ra Y$, which yields
\be
\label{eq:Taylor1}
F(u,\sigma)=F(0,\sigma)+\txtD_uF_{(0,\sigma)}u+R(u),
\ee
where the remainder $R(u)$ satisfies $R(u)=\cO(\|u\|_X^2)$. Note that for the nonlocal FKPP equation
the remainder is exact and given by $R(u)=-u(\phi_\sigma\ast u)$. However, we shall only need the 
existence of a bound for the remainder when $\|u\|_X$ is small; one may calculate this bound efficiently
also for other equations, not only FKPP, by using the Lagrange or integral forms of the remainder 
(see {e.g.}~\cite[{Ch.5}]{DudleyNorvaisa}). Since we always have 
$F(0,\sigma)=0$ it follows from \eqref{eq:Taylor1} that 
\be
\label{eq:Taylor2}
F(u,\sigma)=\txtD_uF_{(0,\sigma)}u+R(u)=\Delta u+u+R(u).
\ee
Let $Z:=X\cap L^\I(\R^d)$ with norm $\|\cdot\|_Z=\max\{\|\cdot\|_X,\|\cdot\|_\I\}$ and 
observe that we may make the last term in \eqref{eq:Taylor2} small if $u$ is in a 
neighborhood of $u=0$ {i.e.}~when $\|u\|_Z$ is small. We start by proving a one-dimensional 
result to illustrate, why it is expected that a change-of-sign plays a role for solutions
close to zero.

\begin{prop}
Let $d=1$ then there exists $\sigma_0>0$ and a ball $B\subset Z$ centered at $u=0$ 
such that $B\times [0,\sigma_0]$ does not contain any non-negative bounded classical solution 
$u\not\equiv 0$, $u\in C^2_b$, to $F(u,\sigma)=0$.
\end{prop}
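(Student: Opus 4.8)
\emph{Proof strategy.} My plan is to combine the exact Taylor remainder of the nonlocal FKPP map with the elementary one-dimensional fact that a bounded concave function on $\R$ is constant. First I would record that, by \eqref{eq:Taylor2} with the exact remainder $R(u)=-u(\phi_\sigma\ast u)$, every classical solution $u\in C^2_b$ of $F(u,\sigma)=0$ with $d=1$ satisfies the pointwise identity
\be
u''(x)=-u(x)\bigl(1-(\phi_\sigma\ast u)(x)\bigr),\qquad x\in\R .
\ee

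Next I would fix the ball $B:=\{u\in Z:\|u\|_Z<\frac12\}$ (any radius $\le\frac12$ works equally well) and let $\sigma_0>0$ be arbitrary, since the estimates below are uniform in $\sigma$. For a non-negative $u\in B$, the normalization $\|\phi_\sigma\|_1=\|\phi\|_1=1$ from (A) together with Young's inequality gives $\|\phi_\sigma\ast u\|_\I\le\|u\|_\I\le\|u\|_Z<\frac12$, hence $1-(\phi_\sigma\ast u)(x)\ge\frac12>0$ for every $x\in\R$. Since $u\ge0$, the displayed identity then forces $u''(x)\le-\frac12\,u(x)\le0$ on all of $\R$; that is, $u$ is concave.

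The key step is then to conclude that a bounded $C^2$ concave function on $\R$ must be constant: $u'$ is non-increasing, so if $u'(x_0)\neq0$ for some $x_0$ then, choosing the side on which $u'$ keeps the sign of $u'(x_0)$, one gets $u(x)\le u(x_0)-|u'(x_0)|\,|x-x_0|\to-\infty$, contradicting boundedness; hence $u'\equiv0$ and $u\equiv a$ for a constant $a\ge0$. Substituting $u\equiv a$ into $F(u,\sigma)=0$ and using $\int_{\R^d}\phi_\sigma=1$ yields $a(1-a)=0$, so $a\in\{0,1\}$; since $\|u\|_Z<1$ rules out $a=1$, we obtain $a=0$, i.e.\ $u\equiv0$, which contradicts $u\not\equiv0$. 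This would establish the proposition for the ball $B$ above and any $\sigma_0>0$.

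I do not expect a genuine obstacle here: the whole content of the proof is that non-negativity converts the semilinear ODE into a concavity statement, and concavity plus boundedness excludes nontrivial solutions when $d=1$. The only points deserving care are that the bound $\|\phi_\sigma\ast u\|_\I\le\|u\|_\I$ is uniform in $\sigma\in[0,\sigma_0]$ (so $\sigma_0$ is essentially a free parameter at this stage), and that $u\in C^2_b$ is genuinely used, so that the concavity/boundedness dichotomy for $u'$ applies in the classical sense. This reasoning is special to $d=1$; in Section \ref{sec:zero} the general case will instead have to invoke results on purely oscillatory solutions of $\Delta U+U=0$, and the present statement is included only to motivate why any branch bifurcating from $(u,\sigma)=(0,0)$ must change sign.
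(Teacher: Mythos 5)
Your proof is correct, and it reduces the statement to the same one‑dimensional ODE identity $u''=-u(1-\phi_\sigma\ast u)$ with $1-\phi_\sigma\ast u$ bounded below by a positive constant on a small ball in $Z$, exactly as the paper does. The difference is in how the final contradiction is extracted. The paper runs a phase‑plane analysis on the system $(u',v')=(v,-u(1-\eta))$, with a case distinction on the sign of $v(0)$, arguing informally that the trajectory must exit $\{u\geq 0\}$. You instead note that $u''\leq 0$ everywhere (not merely where $u>0$, since $u''=0$ when $u=0$), so $u$ is globally concave, and a bounded (or nonnegative) concave $C^2$ function on $\R$ is constant; plugging a constant into $F(u,\sigma)=0$ and using $\|u\|_\I<1$ forces $u\equiv 0$. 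This is cleaner: it avoids the case analysis, it is rigorous without needing to chase the trajectory, and, since $\|\phi_\sigma\ast u\|_\I\leq\|\phi\|_1\|u\|_\I=\|u\|_\I$ uniformly in $\sigma$, it shows $\sigma_0$ may be taken arbitrary (indeed the result holds for all $\sigma\geq 0$ when $B$ is a ball in $Z$, a point worth noting because in Proposition \ref{prop:zero_branch_ra}, where $B\subset X$ only, smallness of $\sigma_0$ really is needed). What the phase‑plane presentation in the paper buys is the explicit ``change of sign'' picture that motivates the higher‑dimensional argument via oscillatory comparison solutions; your argument, while tidier, is intrinsically one‑dimensional and does not foreshadow that mechanism. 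Minor point of hygiene: in the displayed estimate for concavity, the chain $u(x)\leq u(x_0)-|u'(x_0)|\,|x-x_0|$ should be stated for $x$ on the appropriate side of $x_0$ (to the left if $u'(x_0)>0$, to the right if $u'(x_0)<0$), which is what you indicate in words; the inequality as written does not hold for all $x$ simultaneously.
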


\begin{proof}
Fix $\epsilon>0$. Consider any non-negative bounded classical solution 
$(u,\sigma)\in B\times (0,\sigma_0]$ with $u\not\equiv 0$ and define $\eta:=\phi_\sigma\ast u$ 
then we obtain
\benn
F(u,\sigma)=u''+u(1-\eta),
\eenn
where $|\eta(x)|<\epsilon$ holds for all $x\in \R$ as $\|u\|_Z$ can be 
made small; note that this step would have worked with a general small remainder for a
Taylor expansion. Considering $F(u,\sigma)=0$ leads to the two-dimensional non-autonomous vector
field
\be
\label{eq:ODE_1d}
\left\{ \begin{array}{lcl} \frac{\textnormal{d}u}{\textnormal{d}x}(x)&=&v(x),\\ 
\frac{\textnormal{d}v}{\textnormal{d}x}(x)&=&-u(x)(1-\eta(x)).\\\end{array}\right.
\ee
We argue by contradiction and suppose there exists a nonzero solution $(u,v)$ to 
\eqref{eq:ODE_1d} such that $u(x)\geq 0$ for all $x\in\R$ and there exists some $x$ where
$u(x)>0$. We have the pointwise estimate 
\be
\label{eq:good_estimate}
-u(x)(1+\epsilon)<v'(x)<-u(x)(1-\epsilon)
\ee
for all $x\in\R$. A phase plane analysis can now be carried out. Since $u(x)>0$ for some 
$x\in \R$ we may take this value as $x=0$ without loss of generality and consider
\benn
(u(0),v(0))\in \left\{[0,\I)\times (-\I,\I)\right\}-\{(0,0)\}. 
\eenn
Furthermore, if $v(0)\leq 0$ then we immediately get that $v(x)<0$ for
some $x>0$ close to zero. If $v(x)<0$ it follows that $u$ will decay until we have
$u(x^*)<0$ for some $x^*>0$ which yields a contradiction. Hence, we are left 
with the case $v(0)>0$. However, if $v(x)>0$ then $u$ increases so the estimate
\eqref{eq:good_estimate} can be applied to show that $v$ has to cross $\{v=0\}$ into
the region where $v(x)<0$. This finishes the proof.
\end{proof}

The key ideas of the last proof are that we just look at solutions inside a small
neighborhood of the origin $(u,\sigma)=(0,0)$ in $X\times I$ and that any nonzero 
solution $u$ must change sign for an equation which is controlled by solutions of  
\benn
u''+u(1\pm \epsilon)=0. 
\eenn
To generalize this idea we briefly recall some classical theory for linear homogeneous 
partial differential operators \cite{Hoermander1,Hoermander2} 
focusing just on the operator given by
\benn
\cL^\alpha_0U:=\Delta U+(1+\alpha)U,\qquad \alpha\in\R,~|\alpha|<\epsilon
\eenn
for sufficiently small fixed $\epsilon$ with $0<\epsilon<1$. 

\begin{remark}
We are going to develop the following ideas in slightly more generality than 
strictly necessary here to illustrate that the approach is applicable to a much broader 
class of problems. 
\end{remark}

A solution $U=U(x)$ of $\cL^\alpha_0U=0$ is called an exponential solution if it is of the form
\be
\label{eq:H_sol}
U(x)=f(x)\txte^{\txti\xi^Tx},\qquad \text{for $\xi\in\C^d$,} 
\ee
where $f$ is a polynomial. We say that a solution of the form \eqref{eq:H_sol} is a simple exponential solution
if $f\equiv 1$ and $\xi\neq 0$. Furthermore, we call a simple exponential solution purely oscillatory if $\xi\in\R^d$. 
The next result is a slightly modified version of \cite[{Thm.}~7.3.6, p.185]{Hoermander1}.

\begin{prop}
\label{prop:Hoermander_sol}
The closed linear hull of simple exponential solutions to $\cL^\alpha_0U=0$ in the space $C^\I(\R^d)$
yields all solutions to $\cL_0^\alpha U=0$ in $C^\I(\R^d)$.
\end{prop}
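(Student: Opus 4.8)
The plan is to reduce the statement to the classical structure theorem of H\"ormander on constant-coefficient partial differential operators, after checking that the hypotheses of that theorem apply to $\cL_0^\alpha$. First I would recall the relevant setup: $\cL_0^\alpha$ is a linear partial differential operator with constant coefficients, with characteristic polynomial $P(\xi) = -\|\xi\|^2 + (1+\alpha)$ (using the convention that $\partial_j$ corresponds to $\txti\xi_j$, so that the exponential $\txte^{\txti\xi^Tx}$ with $\xi\in\C^d$ is annihilated precisely when $P(\xi)=0$). The set $N(\cL_0^\alpha)$ of solutions in $C^\I(\R^d)$ — equivalently, the space of solutions in $\cD'(\R^d)$, since solutions of an elliptic equation (when $1+\alpha>0$, which holds for $|\alpha|<\epsilon<1$) are automatically smooth by elliptic regularity — is a closed translation-invariant subspace of $C^\I(\R^d)$ with its usual Fr\'echet topology.

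The key step is to invoke \cite[Thm.~7.3.6, p.185]{Hoermander1}, which asserts precisely that for a constant-coefficient operator the closed linear hull (in $C^\I$) of the \emph{exponential solutions} $f(x)\txte^{\txti\xi^Tx}$, where $P$ and all its derivatives up to the order of the multiplicity of $\xi$ as a zero vanish at $\xi$, equals the full solution space. So the content to be extracted here is the reduction from general exponential solutions to \emph{simple} ones (i.e.\ $f\equiv 1$). For this I would examine the zero set $\{\xi\in\C^d : P(\xi)=0\} = \{\xi : \|\xi\|^2 = 1+\alpha\}$ (the complex quadric) and check that every zero of $P$ is a \emph{simple} zero, i.e.\ $\nabla P$ does not vanish there: indeed $\nabla P(\xi) = -2\xi$, which vanishes only at $\xi=0$, and $\xi=0$ is not a zero of $P$ since $P(0)=1+\alpha\neq 0$. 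Hence at every point of the characteristic variety the multiplicity is one, so the only polynomial factors $f$ that can occur are constants, and H\"ormander's exponential solutions are exactly the simple exponential solutions $\txte^{\txti\xi^Tx}$ with $\|\xi\|^2 = 1+\alpha$ (note $\xi\neq 0$ is automatic). This yields the claimed statement.

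A secondary point worth spelling out is why one may restrict attention to $\xi\neq 0$ in the definition of ``simple'': this is forced, since $\xi=0$ would give the constant solution $U\equiv 1$, but constants do not solve $\cL_0^\alpha U = 0$ as $(1+\alpha)\cdot 1 \neq 0$. So no information is lost. I would also remark that the ``slightly modified'' character relative to H\"ormander's original is only this packaging: his theorem is stated for the characteristic variety with multiplicities, and here the multiplicity is uniformly one, which is what collapses ``exponential'' to ``simple exponential.''

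\textbf{Main obstacle.} The genuine mathematical work is minimal — it is the simplicity check on the characteristic variety — so the only real care needed is bookkeeping: making sure the Fourier-sign conventions are consistent with those used elsewhere in the paper (so that $\txte^{\txti\xi^Tx}$ rather than $\txte^{-\txti\xi^Tx}$ is the right ansatz), and making sure that ``closed linear hull in $C^\I(\R^d)$'' is interpreted with the correct topology (uniform convergence of all derivatives on compacta) so that the cited theorem applies verbatim. I do not anticipate any analytic difficulty; the proof is essentially a one-line appeal to \cite[Thm.~7.3.6]{Hoermander1} plus the observation $\nabla P(\xi) = -2\xi \neq 0$ on $\{P=0\}$.
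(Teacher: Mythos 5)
Your proof is correct and takes essentially the same route as the paper: invoke H\"ormander's Theorem~7.3.6 to express all solutions as the closure of the span of exponential solutions, then reduce to simple exponential solutions by noting that the symbol $P(\xi)=-\|\xi\|^2+(1+\alpha)$ has no repeated factors (the paper cites a remark in H\"ormander vol.~II for this; you supply the equivalent explicit check $\nabla P(\xi)=-2\xi\neq 0$ on $\{P=0\}$). Your observation that $\xi\neq 0$ is automatic since $P(0)=1+\alpha\neq 0$ is a slightly cleaner way to say what the paper phrases as constants not solving the equation, but the argument is the same.
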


\begin{proof}
To show that the closed linear hull of exponential solutions contains all solutions follows verbatim from  
\cite[{Thm.}~7.3.6]{Hoermander1} as $\cL^\alpha_0$ is a special case of the elliptic operators covered. To 
restrict the class to simple exponential solutions, one observes that the symbol of $\cL^\alpha_0$ has no 
multiple factors which makes the remark in \cite[p.39]{Hoermander2} applicable such that $f(x)\equiv 1$
for all exponential solutions. Since we restrict to nonzero solutions it follows that $\xi\neq 0$ can be
assumed as well since $\cL^\alpha_0U=0$ has no other constant solutions except $U\equiv 0$. 
\end{proof}

The next result is not necessary to prove the main result of this paper but it is very interesting
to understand the structure of solutions to $\cL^\alpha_0U=0$ in weighted spaces. 

\begin{prop}
\label{prop:oscillatory}
Suppose (B) holds with weight function $w_X\in L^1(\R^d)$. Suppose $\cL^\alpha_0U=0$ and 
$U$ is in the linear hull of simple exponential solutions. Then $U\in X$ if and only if 
$U$ is in the linear hull of purely oscillatory solutions.
\end{prop}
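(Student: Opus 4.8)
The plan is to prove the two implications separately. Write $U=\sum_{j=1}^{N}c_{j}\,\txte^{\txti\xi_{j}^{T}x}$ as a finite linear combination of simple exponential solutions, with the $\xi_{j}\in\C^{d}$ pairwise distinct, all $c_{j}\neq 0$, each $\xi_{j}\neq 0$ and $\xi_{j}^{T}\xi_{j}=1+\alpha$; writing $\xi_{j}=a_{j}+\txti b_{j}$ with $a_{j},b_{j}\in\R^{d}$ we have $|\txte^{\txti\xi_{j}^{T}x}|=\txte^{-b_{j}^{T}x}$, and $U$ lies in the linear hull of the \emph{purely oscillatory} solutions precisely when $b_{j}=0$ for every $j$.

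For the ``if'' direction, assume all $b_{j}=0$. Then $U$ and all its derivatives are bounded, since $\txtD^{\gamma}U=\sum_{j}c_{j}(\txti\xi_{j})^{\gamma}\txte^{\txti\xi_{j}^{T}x}$ with $\|\txtD^{\gamma}U\|_{\I}\leq\sum_{j}|c_{j}|\,\|\xi_{j}\|^{|\gamma|}$. Because $w_{X}\in L^{1}(\R^{d})$, this gives $\|\txtD^{\gamma}U\|_{q;w_{X}}^{q}\leq\|\txtD^{\gamma}U\|_{\I}^{q}\,\|w_{X}\|_{1}<\I$ for $q\in\{2,4\}$ and every relevant multi-index $\gamma$, hence $U\in W^{k+2,2}(\R^{d};w_{X})\cap W^{k,4}(\R^{d};w_{X})=X$.

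For the ``only if'' direction I argue by contraposition: assuming some $b_{j}\neq0$, I will show $\int_{\R^{d}}|U(x)|^{2}w_{X}(x)\,\txtd x=+\I$, which forces $U\notin X$ since $\int_{\R^{d}}|U|^{2}w_{X}\leq\|U\|_{k+2,2;w_{X}}^{2}\leq\|U\|_{X}^{2}$. Put $M:=\max_{j}\|b_{j}\|>0$, fix $j_{0}$ with $\|b_{j_{0}}\|=M$, set $b^{*}:=b_{j_{0}}$ and $J^{*}:=\{\,j:b_{j}=b^{*}\,\}$; this set is nonempty, and since the $\xi_{j}$ are distinct the vectors $\{a_{j}:j\in J^{*}\}$ are pairwise distinct. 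Choose orthonormal coordinates with first axis $e_{1}=-b^{*}/M$ and write $x=(x_{1},x')\in\R\times\R^{d-1}$; one checks $b_{j}^{T}x=-Mx_{1}$ for $j\in J^{*}$, while $b_{j}^{T}x=\beta_{j}x_{1}+b_{j}^{T}x'$ with $\beta_{j}>-M$ for $j\notin J^{*}$. Restricting the integral to a slab $\{x'\in Q\}$ with $Q\subset\R^{d-1}$ a large fixed box, and factoring out the common exponential, yields on that slab
\benn
U(x)=\txte^{Mx_{1}}\bigl(g(x)+r(x)\bigr),\qquad g(x):=\sum_{j\in J^{*}}c_{j}\,\txte^{\txti a_{j}^{T}x},\qquad |r(x)|\leq C\,\txte^{-\delta x_{1}}\quad(x_{1}>0),
\eenn
with $\delta:=\min_{j\notin J^{*}}(M+\beta_{j})>0$. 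Since $g$ is a nontrivial trigonometric polynomial, on the set where $|g|\geq 2C\txte^{-\delta x_{1}}$ (whose complement inside the slab is exponentially thin as $x_{1}\to\I$) we have $|U(x)|^{2}\geq\tfrac14\txte^{2Mx_{1}}|g(x)|^{2}$.

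It remains to prove $\int\txte^{2Mx_{1}}|g(x)|^{2}w_{X}(x)\,\txtd x=+\I$ over that slab. Partition $\{x_{1}>R_{0}\}$ into windows $W_{\nu}=\{R_{0}+\nu L<x_{1}<R_{0}+(\nu+1)L\}$ with $L$ fixed and large; on $W_{\nu}\times Q$ one has $\txte^{2Mx_{1}}w_{X}(x)\gtrsim\txte^{2M\nu L}(1+\nu)^{-2l_{X}}$, and expanding $|g|^{2}=\sum_{j,l\in J^{*}}c_{j}\bar c_{l}\,\txte^{\txti(a_{j}-a_{l})^{T}x}$ the diagonal contributes $\bigl(\sum_{j\in J^{*}}|c_{j}|^{2}\bigr)L|Q|$, while every off-diagonal term ($j\neq l$, hence $a_{j}\neq a_{l}$) is bounded uniformly in $\nu$: its transverse integral over $Q$ is $o(|Q|)$ when $a_{j}-a_{l}$ has a nonzero transverse component, and its $x_{1}$-integral over $W_{\nu}$ is $O(1)$ otherwise. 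Choosing $Q$ large and then $L$ large therefore gives $\int_{W_{\nu}\times Q}|g|^{2}\gtrsim L|Q|$ uniformly in $\nu$; after absorbing the exponentially thin exceptional set (valid for $R_{0}$ large) one gets $\int_{\R^{d}}|U|^{2}w_{X}\gtrsim\sum_{\nu\geq0}\txte^{2M\nu L}(1+\nu)^{-2l_{X}}=+\I$. The crux of the argument is exactly this last step: the leading term $g$ is only almost-periodic and may vanish on large sets, so $|U|$ cannot be bounded below along a single ray; the remedy is to group the frequencies of maximal imaginary-part norm so that they share the factor $\txte^{Mx_{1}}$, and to integrate over a box that is large transversally and over a window long in $x_{1}$, so that the cross terms in $|g|^{2}$ decouple (Bessel's inequality for trigonometric polynomials) and only the strictly positive diagonal survives, every contribution involving $j\notin J^{*}$ being exponentially subdominant.
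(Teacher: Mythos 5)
Your proof is correct and establishes the statement, but the key step — showing that exponential growth forces divergence of the weighted $L^{2}$-norm — is carried out quite differently from the paper. The paper's proof (Appendix B) picks the maximal \emph{coordinate-wise} imaginary part $|\textnormal{Im}((\xi_{k^{*}})_{l^{*}})|$, constructs a sequence of disjoint balls $B(y_m,\delta)$ with $\|y_m\|\to\infty$ that avoid the zero set of the oscillatory factor $s(x)$ and on which $|U|^{p}w_X>1$ pointwise, and then sums the ball volumes; the difficulty that several frequencies may tie for the maximal growth rate and produce cancellations is dealt with by a rather intricate case analysis (unique vs.\ non-unique $(k^{*},l^{*})$, complex-conjugate pairs, same-growth subtractive pairs). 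You instead take the maximal vector norm $M=\max_j\|b_j\|$, group \emph{all} tied frequencies into $J^{*}$, rotate so the common factor is $\txte^{Mx_{1}}$, and replace the pointwise lower bound by an $L^{2}$-averaging argument over boxes $W_{\nu}\times Q$: orthogonality of distinct characters makes the diagonal of $|g|^{2}$ dominate the cross terms once $Q$ and then $L$ are taken large. This buys you a genuinely uniform treatment of all the cancellation scenarios the paper enumerates by hand — the conjugate-pair and same-growth cases are absorbed automatically since they either lie outside $J^{*}$ (and thus decay at rate $\geq\delta$) or inside it (and thus only contribute off-diagonal terms that your box integration controls). The trade-off is that your argument requires the slightly more delicate bookkeeping of the exceptional set where $|g|<2C\txte^{-\delta x_{1}}$; you correctly note it is exponentially thin, but a fully rigorous write-up would want the sublevel-set estimate for the trigonometric polynomial $g$ spelled out (e.g.\ via a Turán-type bound), whereas the paper sidesteps quantitative sublevel estimates by placing balls directly away from the codimension-$\geq1$ zero set. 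Both routes are valid; yours is cleaner conceptually and scales better if one wanted to pass to the closed linear hull (Conjecture B.2), where the ball-placement argument would not obviously survive.
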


\begin{proof}
See Appendix \ref{ap:osc}.
\end{proof}

The next result, albeit straightforward, is a crucial observation to deal with solutions near
the point $(u,\sigma)=(0,0)$ in the proof of Theorem \ref{thm:main}.

\begin{lem}
\label{lem:change_sign}
There exists a purely oscillatory solution of $\cL^\alpha_0U=0$ such that $U$ changes sign {i.e.}
there exist two points $x^-,x^+\in\R^d$ such that $U(x^-)<0$ and $U(x^+)>0$.
\end{lem}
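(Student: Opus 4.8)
The plan is to exhibit an explicit purely oscillatory solution and verify directly that it changes sign. By definition a purely oscillatory solution has the form $U(x)=\txte^{\txti\xi^Tx}$ with $\xi\in\R^d\setminus\{0\}$; applying the operator gives $\cL^\alpha_0U=(1+\alpha-\|\xi\|^2)U$, so such a $U$ solves $\cL^\alpha_0U=0$ exactly when $\|\xi\|^2=1+\alpha$. Since $|\alpha|<\epsilon<1$ we have $1+\alpha\in(0,2)$, so this relation admits real solutions, for instance $\xi_0:=(\sqrt{1+\alpha},0,\dots,0)\in\R^d$.

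Next I would pass to a real-valued solution while staying inside the hull of purely oscillatory solutions. Because $\cL^\alpha_0$ has real coefficients, $\txte^{-\txti\xi_0^Tx}$ also solves $\cL^\alpha_0U=0$, and since $\|-\xi_0\|^2=1+\alpha$ it is again purely oscillatory. Hence
\[
U(x):=\cos(\xi_0^Tx)=\tfrac12\bigl(\txte^{\txti\xi_0^Tx}+\txte^{-\txti\xi_0^Tx}\bigr)
\]
lies in the linear hull of purely oscillatory solutions and satisfies $\cL^\alpha_0U=0$.

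Finally I would check the sign change with explicit points: taking $x^+:=0\in\R^d$ gives $U(x^+)=\cos 0=1>0$, and taking $x^-:=\frac{\pi}{1+\alpha}\,\xi_0\in\R^d$ gives $\xi_0^Tx^-=\pi$, hence $U(x^-)=\cos\pi=-1<0$. This proves the lemma.

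There is essentially no obstacle here; the only points that need a moment of care are that $1+\alpha>0$, so that a real frequency vector exists at all, and that one forms the real part as a combination of two \emph{purely oscillatory} solutions (using that $\xi_0$ and $-\xi_0$ both lie on the admissible sphere). If one additionally wants $U\in X$, as is used when this lemma is applied near $(u,\sigma)=(0,0)$, that follows from Proposition \ref{prop:oscillatory} together with $w_X\in L^1(\R^d)$, but it is not required for the statement as given.
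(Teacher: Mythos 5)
Your proof is correct and uses exactly the same oscillatory solution $U(x)=\cos(\sqrt{1+\alpha}\,x_1)$ as the paper; you simply verify the dispersion relation, the real-valued decomposition into two admissible exponentials, and the explicit sign-change points that the paper leaves implicit.
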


\begin{proof}
It is easy to see that $U(x)=\cos(\sqrt{1+\alpha}x_1)$. In fact, there are many other 
trigonometric functions which would work as well.
\end{proof}

The last proof also showed that $\cL^\alpha_0:X\ra Y$ is not invertible as it does have a non-trivial 
nullspace; {cf.}~Appendix \ref{ap:direct}. The next step is to exclude the existence of non-negative bounded
solutions $u\not\equiv 0$ for sufficiently small nonlocality near the special point $(u,\sigma)=(0,0)$
for arbitrary dimension $d$.

\begin{prop}
\label{prop:zero_branch}
There exists $\sigma_0>0$ and a ball $B\subset Z$ centered at $u=0$ such that $B\times [0,\sigma_0]$ does not 
contain any non-negative bounded classical solution $u\not\equiv 0$, $u\in C^2_b(\R^d)$, to $F(u,\sigma)=0$.
\end{prop}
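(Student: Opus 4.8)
The plan is to localize near $(u,\sigma)=(0,0)$, to reduce the nonlocal stationary equation to a \emph{linear elliptic equation whose zeroth-order coefficient is close to $1$ but bounded below by a positive constant}, and then to compare a hypothetical non-negative solution with the first Dirichlet eigenfunction of $-\Delta$ on a large ball. This is the device that replaces, in arbitrary dimension $d$, the one-dimensional phase-plane argument used above, and it turns the change-of-sign mechanism of Lemma~\ref{lem:change_sign} into a quantitative statement. Concretely, I would fix $\epsilon\in(0,1)$, set $\sigma_0:=1$ (any positive value works), and let $B\subset Z$ be the ball of radius $\epsilon$ centred at $u=0$. Suppose $(u,\sigma)\in B\times[0,\sigma_0]$ is a non-negative classical solution $u\in C^2_b(\R^d)$, $u\not\equiv 0$, of $F(u,\sigma)=0$, and put $\eta:=\phi_\sigma\ast u$. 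The generalized Young inequality (Appendix~\ref{ap:LiebLoss}) together with $\|\phi_\sigma\|_1=1$ from (A) gives $\|\eta\|_\I\le\|u\|_\I\le\|u\|_Z<\epsilon$, \emph{uniformly in} $\sigma$; since $u\ge0$ and $\phi_\sigma\ge0$ one has $0\le\eta\le\epsilon$. By the Taylor form \eqref{eq:Taylor2}, $u$ then solves
\benn
\Delta u+c(x)\,u=0\quad\text{in }\R^d,\qquad c(x):=1-\eta(x)\ge 1-\epsilon>0.
\eenn

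Next I would argue by contradiction. Since $u$ is continuous, non-negative, and not identically zero, there is $x_0\in\R^d$ with $u(x_0)>0$, hence $u>0$ on a neighbourhood of $x_0$. For $R>0$ let $\lambda_1(B_R)>0$ be the principal Dirichlet eigenvalue of $-\Delta$ on the ball $B_R(x_0)$ of radius $R$, with eigenfunction $\varphi_R\in C^\I(\overline{B_R(x_0)})$ normalized so that $\varphi_R>0$ in $B_R(x_0)$ and $\varphi_R=0$ on $\partial B_R(x_0)$. By scaling $\lambda_1(B_R)=R^{-2}\lambda_1(B_1)\to 0$ as $R\to\I$, so I fix $R$ so large that $\lambda_1(B_R)<1-\epsilon$.

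The contradiction then comes from Green's second identity on $B_R(x_0)$: using $\varphi_R=0$ on $\partial B_R(x_0)$,
\benn
\int_{B_R(x_0)}\bigl(\varphi_R\,\Delta u-u\,\Delta\varphi_R\bigr)\,\txtd x=-\int_{\partial B_R(x_0)}u\,\partial_\nu\varphi_R\,\txtd S\ge 0,
\eenn
because $u\ge0$ on $\partial B_R(x_0)$ while the outward normal derivative $\partial_\nu\varphi_R\le0$ there (a non-negative function vanishing on the boundary cannot have positive outward normal derivative). Inserting $\Delta u=-c(x)u$ and $\Delta\varphi_R=-\lambda_1(B_R)\varphi_R$ rewrites the left-hand side as $\int_{B_R(x_0)}\bigl(\lambda_1(B_R)-c(x)\bigr)u\,\varphi_R\,\txtd x$, which is \emph{strictly negative}: $\lambda_1(B_R)-c(x)<(1-\epsilon)-(1-\epsilon)=0$ for every $x\in B_R(x_0)$, whereas $u\,\varphi_R\ge 0$ with $u\,\varphi_R>0$ on a neighbourhood of $x_0$. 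This contradiction shows that no such $u$ exists; in particular the one-dimensional case above is recovered with $\varphi_R(x)=\cos\!\big(\tfrac{\pi}{2R}(x-x_0)\big)$ and $\lambda_1(B_R)=\pi^2/(4R^2)$.

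I expect the argument to be mostly bookkeeping once the comparison function is identified; the genuine inputs are the elementary scaling $\lambda_1(B_R)=R^{-2}\lambda_1(B_1)\to0$ and the regularity facts that $u\in C^2(\overline{B_R(x_0)})$ (since $u\in C^2_b(\R^d)$) and that $\varphi_R$ is smooth up to $\partial B_R(x_0)$ with the stated sign of $\partial_\nu\varphi_R$. The only point I would be careful about is the localization: the smallness of $\|\eta\|_\I$ must be uniform over $\sigma\in[0,\sigma_0]$, which holds automatically because $\|\phi_\sigma\|_1=1$ for all $\sigma$ — this is precisely why $\sigma_0$ may be chosen freely here. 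Alternatively one could run the same integral comparison against the purely oscillatory, Bessel-type solutions of $\cL^\alpha_0U=0$ from Section~\ref{sec:zero}, restricted to their first positive arch; the positive principal Dirichlet eigenfunction on a ball is simply the most convenient such solution for making the sign bookkeeping transparent.
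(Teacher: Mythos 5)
Your proof is correct, and it takes a genuinely different and cleaner route than the paper's. The paper argues by comparing $u$ with a carefully chosen radial oscillatory (Bessel-type) solution of $\Delta v+(1-\epsilon)v=0$ on an annulus; it first invokes the strong positivity $u>0$ everywhere from \cite[{Lem. 2.1}]{BerestyckiNadinPerthameRyzhik}, then uses the constant-rank level set theorem to manufacture a domain $\Omega$ on which the comparison principle \cite[{Thm.}~2.2.4]{PucciSerrin} applies, and finally derives a contradiction by rescaling the comparison function. Your argument bypasses all of that: you reduce, exactly as the paper does, to the linear inequality $\Delta u+c(x)u=0$ with $c\ge 1-\epsilon>0$, but then test against the principal Dirichlet eigenfunction $\varphi_R$ of $-\Delta$ on a large ball, using only the scaling $\lambda_1(B_R)=R^{-2}\lambda_1(B_1)\to 0$, the sign of $\partial_\nu\varphi_R$ on the boundary, and Green's second identity. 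This is the standard ``no positive supersolution of a Schr\"odinger operator with strictly positive potential on $\R^d$'' argument, and it needs only a single point $x_0$ with $u(x_0)>0$ rather than global positivity of $u$, and no level-set or rescaling machinery. What the paper's longer route buys is continuity with the preceding discussion of purely oscillatory solutions and Lemma~\ref{lem:change_sign} (the paper deliberately emphasizes that narrative), and an explicit comparison function; what your route buys is brevity, fewer external citations (no \cite{Lee}, no \cite{PucciSerrin}, no appeal to the positivity lemma of \cite{BerestyckiNadinPerthameRyzhik}), and a cleaner isolation of the genuine mechanism, namely that $\lambda_1(B_R)$ beats any uniform lower bound on the zeroth-order coefficient once $R$ is large. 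One small presentational point: it is worth noting explicitly that at $\sigma=0$ one reads $\phi_\sigma\ast u$ as $u$, so $\|\eta\|_\infty\le\|u\|_\infty$ holds on the whole closed interval $[0,\sigma_0]$; you implicitly use this but do not say it.
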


\begin{proof}
Fix $\epsilon>0$. Let $u\in B$ and $u\not\equiv 0$. If $u\geq 0$ then it follows $u(x)>0$ for all
$x\in\R^d$ by \cite[{Lem. 2.1}]{BerestyckiNadinPerthameRyzhik}. We argue by contradiction 
and suppose that there exists a solution $u(x)>0$ for all $x\in\R^d$. In particular, this implies 
$u(0)>0$. Observe that 
\be
\label{eq:ineq_comp}
0=\Delta u+u(1-\phi_\sigma \ast u)\geq \Delta u+u(1-\epsilon)
\ee
holds with $0<\epsilon<1$ if $\|u\|_{L^\infty}<\epsilon$, which we can 
achieve by shrinking $B$. Define the differential operator 
\benn
\cL^\epsilon v:=\Delta v+v(1-\epsilon),\qquad \text{with $0<\epsilon<1$}.
\eenn 
Now we choose a special radially symmetric oscillatory 
solution $\bar{u}^\epsilon$ with $\bar{u}^\epsilon(0)>0$ which, for some 
$\delta_3>\delta_2>\delta_1>0$, obeys 
\be
\label{eq:radial_cond1}
\bar{u}^\epsilon(x)>0\text{ for $x\in \{\|x\|<\delta_1\} \cup \{\delta_2<\|x\|\leq\delta_3\}$}
\ee 
and
\be
\label{eq:radial_cond2} 
\bar{u}^\epsilon(x)<0\text{ for $x\in \{\delta_1<\|x\|<\delta_2\}$}.
\ee
To construct such a solution we may just take the solution from the proof Lemma \ref{lem:change_sign}
for dimension $d=1$. For $d\geq 2$, we note that $\cL^\epsilon v=0$ is just Helmholtz's equation and 
a suitable radial solution is given by scaled hyperspherical Bessel functions; see 
{e.g.}~\cite{FengHuangYang}. We may scale the solution $\bar{u}^\epsilon$ by a positive 
constant, say $\kappa>0$, and still denote it $\bar{u}^\epsilon$, such that 
\be
\label{eq:xcondS}
\bar{u}^\epsilon(x)>u(x)\text{ for $x\in \{\delta_2+\delta<\|x\|<\delta_3\}$},
\ee
for some small constant $\delta>0$, and a suitable $\delta_3$ such that $\delta_2+\delta<\delta_3$ and the 
conditions \eqref{eq:radial_cond1}-\eqref{eq:radial_cond2} still hold. By making $\kappa>0$ sufficiently large 
and using smoothness of $u$ and 
$\bar{u}^\epsilon$ it follows that the mapping $(u-\bar{u}^\epsilon):\R^d\ra \R$ has constant rank one for 
$x\in \{\delta_2<\|x\|<\delta_3\}$. Now the constant-rank level set theorem 
\cite[{Thm 8.8}]{Lee} and smoothness of $u$ and $\bar{u}^\epsilon$ yield the existence of a closed 
embedded $(d-1)$-dimensional manifold $\Gamma$, which consists of the zeros of $u-\bar{u}^\epsilon$ in 
$\{\delta_2<\|x\|<\delta_2+\delta\}$, {i.e.}, $u(x)=\bar{u}^\epsilon(x)$ for $x\in \Gamma$. In fact, 
$\Gamma$ is smooth \cite[{Thm 8.2}]{Lee}. 

Furthermore, consider an arbitrary nonzero unit length vector $v\in\R^d$ and the function 
$\rho(\lambda):=(u-\bar{u}^\epsilon)(\lambda v)$ for $\lambda\in(\delta_2,\delta_2+\delta)$. By continuity, there 
exists a point $p=\lambda v\in\Gamma$ such that $\rho(p)=0$.
Upon making $\delta>0$ sufficiently small, and increasing $\kappa>0$ if necessary, it follows that 
there exists a unique such point $p$. Hence, it follows that $\Gamma$ is smooth manifold, which is 
topologically a sphere. Let $\Omega$ denote the bounded domain with boundary $\partial \Omega = \Gamma$ 
inside the topological sphere $\Gamma$, {i.e.},
\benn
\Omega:=\bigcup_{v\in \Gamma} \{\lambda v:\lambda \in[0,1] \}.
\eenn
Using \eqref{eq:ineq_comp} 
we apply the comparison 
principle \cite[{Thm.}~2.2.4]{PucciSerrin} to $\cL^\epsilon$ for the domain $\Omega$, which implies
\be
\label{eq:large_sol}
u(x)\geq \bar{u}^\epsilon(x) 
\ee
for all $x\in\Omega$. However, we can scale $\bar{u}^\epsilon$ by a positive constant so that $\bar{u}^\epsilon(x)>u(x)$ 
for some $x\in \{\|x\|<\delta_1\}$. This contradiction to \eqref{eq:large_sol} concludes the proof.
\end{proof}

In the next proposition, we strengthen the conclusion of Proposition \ref{prop:zero_branch} 
to a ball in $X$.

\begin{prop}
\label{prop:zero_branch_ra}
There exists $\sigma_0>0$ and a ball $B\subset X$ centered at $u=0$ such that $B\times [0,\sigma_0]$ does not 
contain any non-negative bounded classical solution $u\not\equiv 0$, $u\in C^2_b(\R^d)$, to $F(u,\sigma)=0$.
\end{prop}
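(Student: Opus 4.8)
The strategy is to reduce the claim to Proposition \ref{prop:zero_branch} by noting that its comparison argument never uses a global $L^\infty$ bound on $u$ — only that the averaged function $\phi_\sigma\ast u$ is small on a \emph{fixed} compact neighbourhood of the origin — and that this local smallness is already controlled by $\|u\|_X$ alone.

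First I would re-inspect the proof of Proposition \ref{prop:zero_branch} to isolate exactly what is used. Once one has $\phi_\sigma\ast u\leq\epsilon$ on the relevant region, the only consequence invoked is the inequality $\cL^\epsilon u=\Delta u+u(1-\epsilon)=u(\phi_\sigma\ast u-\epsilon)\leq0$, {i.e.}~that $u$ is a supersolution of $\cL^\epsilon$; the radial barrier $\bar u^\epsilon$, the topological sphere $\Gamma$, the bounded domain $\Omega$ with $\partial\Omega=\Gamma$, and the application of the comparison principle \cite[{Thm.}~2.2.4]{PucciSerrin} all live inside the fixed ball $\{\|x\|\leq\delta_3\}$, whose radius depends only on the fixed constant $\epsilon$ (through the nested radii $\delta_1<\delta_2<\delta_3$), not on $u$ or on $\sigma$. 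The positivity $u>0$ on $\R^d$ needed to locate $\Gamma$ still follows from \cite[{Lem.}~2.1]{BerestyckiNadinPerthameRyzhik}, and the final rescaling of $\bar u^\epsilon$ only uses that $u$ is bounded on $\overline{B_{\delta_3}(0)}$, which holds since $u\in C^2_b(\R^d)$. Hence it suffices to arrange $\phi_\sigma\ast u<\epsilon$ on $\{\|x\|\leq\delta_3\}$.

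Second, I would prove a local smoothing bound: there is a constant $C=C(\delta_3,d,k)$, independent of $\sigma\in I$, such that $\sup_{\|x\|\leq\delta_3}|(\phi_\sigma\ast u)(x)|\leq C\|u\|_X$ for every $u\in X$. Since differentiation commutes with convolution, $\txtD^\alpha(\phi_\sigma\ast u)=\phi_\sigma\ast\txtD^\alpha u$, the generalized Young inequality of Appendix \ref{ap:LiebLoss} together with $\|\phi\|_1=1$ gives $\|\txtD^\alpha(\phi_\sigma\ast u)\|_{p;w_X}\leq\|\txtD^\alpha u\|_{p;w_X}$ for all $|\alpha|\leq k+2$, and likewise with exponent $2p$ for $|\alpha|\leq k$; hence $\phi_\sigma\ast u\in X$ with $\|\phi_\sigma\ast u\|_X\leq\|u\|_X$. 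As $w_X$ is bounded below by a positive constant on the compact set $\{\|x\|\leq\delta_3\}$, the norm of $\phi_\sigma\ast u$ in the \emph{unweighted} space $W^{k+2,p}(B_{\delta_3}(0))\cap W^{k,2p}(B_{\delta_3}(0))$ is bounded by a multiple of $\|u\|_X$, and the Sobolev embedding of this space into $C^0(\overline{B_{\delta_3}(0)})$ — valid once $k$ is chosen large enough relative to $d$, {e.g.}~$2(k+2)>d$ — yields the asserted bound. (For small $d$ one may simply take $k=0$; in general this only fixes how $k$ is chosen in the definition of $X$, which is harmless.)

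Finally, with $\epsilon$ and $\delta_1<\delta_2<\delta_3$ the constants from the proof of Proposition \ref{prop:zero_branch} and $\sigma_0>0$ as there, I would set $B:=\{u\in X:\|u\|_X<\epsilon/C\}$. For any non-negative bounded classical solution $(u,\sigma)\in B\times[0,\sigma_0]$ of $F(u,\sigma)=0$ with $u\not\equiv0$, the previous step gives $\phi_\sigma\ast u<\epsilon$ on $\{\|x\|\leq\delta_3\}$, so the barrier-and-comparison argument of Proposition \ref{prop:zero_branch} runs verbatim and produces a contradiction. The main obstacle — and the reason one cannot simply quote Proposition \ref{prop:zero_branch} — is that a small value of $\|u\|_X$ does \emph{not} force $\|u\|_{L^\infty(\R^d)}$ to be small, since a solution could in principle be large far from the origin where $w_X$ is tiny, so $u$ need not lie in the ball $B\subset Z$ of Proposition \ref{prop:zero_branch}. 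The point that makes the argument work anyway is that only the \emph{local} size of the averaged function $\phi_\sigma\ast u$ enters the comparison argument, and this is tamed by the full weighted Sobolev norm $\|u\|_X$ irrespective of the behaviour of $u$ at infinity.
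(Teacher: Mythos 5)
Your approach is genuinely different from the paper's, and the key structural observation — that the barrier-and-comparison argument of Proposition \ref{prop:zero_branch} only needs $\phi_\sigma\ast u<\epsilon$ on the fixed compact set $\{\|x\|\leq\delta_3\}$, not globally — is a correct and useful reformulation. The paper instead splits the convolution into contributions from inside and outside a fixed ball $\cB(2\delta)$, uses the $C^2_b$-regularity of $u$ together with the smallness of $\|u\|_X$ to argue that $u<\tfrac13\epsilon$ on $\cB(2\delta)$, and uses $\sigma$ small to make the far-field convolution contribution small near the origin. Your route replaces that decomposition by a direct local estimate $\sup_{\|x\|\leq\delta_3}|\phi_\sigma\ast u|\leq C\|u\|_X$ obtained from the weighted Young bound $\|\phi_\sigma\ast u\|_X\leq\|u\|_X$ followed by a Sobolev embedding on $B_{\delta_3}$, and this has the appealing feature that it is uniform in $\sigma$.

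However, there is a genuine gap in the final step. The embedding $W^{k+2,2}(B_{\delta_3})\hookrightarrow C^0(\overline{B_{\delta_3}})$ needs $2(k+2)>d$, and under the paper's assumption (B) one has $p=2$; moreover the choice $k=0$ is effectively forced by the inclusion $C^2_b(\R^d)\subset X$, which the paper explicitly uses (a function in $C^2_b$ has no weak derivatives of order $>2$ in general, so $C^2_b\subset W^{k+2,2}(\R^d;w_X)$ fails for $k\geq1$). Thus your embedding only holds for $d\leq3$, and the remark that choosing $k$ larger is ``harmless'' is not correct: raising $k$ breaks $C^2_b\subset X$ and with it the compatibility of $X$ with the class of classical solutions in Theorem \ref{thm:main}. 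The extra $L^{2p}$ part of the $X$-norm does not rescue the embedding either, since $W^{2,2}\cap L^4$ on a bounded domain still fails to embed into $L^\infty$ once $d\geq4$. To close the gap one would have to either restrict to $d\leq3$, or add a separate elliptic bootstrapping argument showing that any $C^2_b$ solution of $F(u,\sigma)=0$ in fact lies in $C^{k+2}_b$ with a controlled norm (so that $X$ can legitimately be redefined with a larger $k$); neither is addressed in the proposal, whereas the paper's splitting argument avoids the Sobolev embedding entirely and works for all $d$.
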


\begin{proof}
Observe that to establish \eqref{eq:ineq_comp} in Proposition \ref{prop:zero_branch} we needed 
the bound $0<\|u\|_\I<\epsilon <1$. Note that we may not have this uniform $L^\I$-bound on 
$u$ for $u\in B\subset X$, where $B$ is a small ball centered at $u=0$. However, by making $B$ small enough 
and using the assumption that $u\in C^2_b$, there exists a ball
\benn
\cB(2\delta):=\{x\in\R^d:\|x\|\leq 2\delta\}\subset \R^d
\eenn
for some $\delta>0$ such that for a given fixed constant $\epsilon\in(0,1)$
\benn
\delta >\delta_3\quad \text{and}\quad  u(x)<\frac13\epsilon~\text{for $x\in \cB(2\delta)$}.
\eenn
Indeed, suppose $\cB(2\delta)$ does not exist, then there exists an open bounded set $\cM\subset \R^d$ 
of positive volume such that $u(x)\geq\frac13 \epsilon$ for all $x\in \cM$. In this case, we can make $B$ smaller
to obtain a contradiction. This implies that all sets $\cM$ such that $u(x)\geq\frac13\epsilon$ for $x\in \cM$ can be
assumed to exist only outside the ball $\cB(2\delta)$. Hence we deduce that
\beann
\int_{\R^d}\phi_\sigma(x-y)u(y)~\txtd y&=&\int_{\R^d-\cB(2\delta)}\phi_\sigma(x-y)u(y)~\txtd y+
\int_{\cB(2\delta)}\phi_\sigma(x-y)u(y)~\txtd y\\
&\leq & \int_{\R^d-\cB(2\delta)}\phi_\sigma(x-y)u(y)~\txtd y +\frac13\epsilon, 
\eeann
where the first term on the right can be made small if $x\in\cB(\delta)$ and $\sigma>0$ is sufficiently small; 
in particular, we can make it smaller than $\frac23\epsilon$ since for $x\in \cB(\delta)$ and $\sigma$ small, 
the function $\phi_\sigma(x-y)$ is concentrated near $\partial \cB(\delta)$ and $u(x)\leq \frac13\epsilon$ for $x$ near 
$\partial \cB$ as $u\in C_b^2$. Therefore, the inequality \eqref{eq:ineq_comp} 
holds on $\cB(\delta)$. Now we can repeat the argument from 
Proposition \ref{prop:zero_branch} as all relevant sets are contained in $\cB(\delta)$.
\end{proof}

We could have proved Proposition \ref{prop:zero_branch} slightly quicker 
and bypassed some of the preliminary discussion of the oscillatory solution space. However, 
it is important to point out the structure of special points. In 
particular, the linearized problem $\txtD_uF_{(0,0)}u=0$, and a suitable small perturbation of it, only contain
solutions outside of the class we are interested in ('bounded non-negative classical solutions'). This observation 
facilitates the comparison approach and is a much more general strategy applicable to problems beyond 
the FKPP equation.

\section{Proof of the Main Result}
\label{sec:proof_main}

Recall that we try to establish that the only bounded non-negative classical solutions to the nonlocal 
FKPP equation for small nonlocality are $u\equiv 0$ and $u\equiv 1$.

\begin{proof}(of Theorem \ref{thm:main})
By Proposition \ref{prop:L1} and the implicit function theorem applied to $F:X\times I\ra Y$ at 
$(u,\sigma)=(1,0)$ it follows that there exist balls $B(1,r_1)\subset X$ and $B(0,r_0)\subset I$ 
with some radii $r_{0,1}>0$ and exactly one continuous map $T:B(0,r_0)\ra B(1,r_1)$ such that 
\benn
T(0)=1\qquad \text{and}\qquad F(T(\sigma),\sigma)=0\quad\text{on $B(0,r_0)$.}
\eenn   
Since $F(1,\sigma)=0$ gives a solution branch $u\equiv 1$ the uniqueness of $T$ implies that 
$T(\sigma)\equiv1$ for $\sigma\in B(0,r_0)$. Hence, in a sufficiently small neighborhood $\cN_1$ 
of $(u,\sigma)=(1,0)$ only the homogeneous solution $u\equiv1$ exists.

In a neighborhood $\cN_0$ of $(u,\sigma)=(0,0)$ we may apply Proposition \ref{prop:zero_branch_ra} to 
conclude that the only branch of bounded non-negative classical solutions to $F(u,\sigma)=0$ is given by
$u\equiv 0$. Hence it remains to consider the possibility of solutions outside of neighborhoods of 
the two trivial solution branches.

We argue by contradiction. Suppose there exists a sequence of bounded non-negative 
solutions $(u_j,\sigma_j)$ such that 
\benn
(u_j,\sigma_j)\in X\times I-(\cN_0\cup\cN_1),\quad 0\leq \|u_j\|_{C^2_b}\leq K,\quad
\text{ for all $j\in\N$,}
\eenn
and we have 
\benn
F(u_j,\sigma_j)=0,\qquad \sigma_j\ra 0\text{ as }j\ra \I.
\qquad  
\eenn
Note that the sequence $\{u_j\}$ is bounded in $X_1=W^{k+2,p}(\R^d;w_X)$ for $k=0$ and $p=2$, 
{i.e.}, in $X_1=H^{2}(\R^d;w_X)$. Due 
to the theory of weighted Sobolev spaces and their associated compact embeddings \cite[{Prop. 2.5}]{HarringtonRaich} 
we have that the embedding $W^{k+2,2}(\R^d;w_X)\hookrightarrow W^{k+1,2}(\R^d;w_X)$ is compact. It
follows, upon passing to a subsequence, $u_{j}\ra u_\I$ for some $u_\I\in \tilde{X}_1:=H^1(\R^d;w_X)$. By continuity 
of $F$ from Lemma \ref{lem:Frechet}, it follows that $u_\I$ is a weak solution to the local problem
\benn
F(u,0)=0
\eenn
{i.e.}~for any test function $\psi\in C^\I_c(\R^d)$ we have 
\benn
-\int_{\R^d}\sum_{j=1}^d\frac{\partial u_\I}{\partial x_j}(x)  \frac{\partial \psi}{\partial x_j}(x) ~\txtd x+\int_{\R^d}u_\I(x)(1-u_\I(x))\psi(x)~\txtd x. 
\eenn
Furthermore, $u_\I\geq 0$ almost everywhere as $u_j(x)\geq 0$ for all $x\in\R^d$, $j\in\N$. 
By elliptic regularity we can obtain that $u_\I\in C^2(\R^d)$. Therefore, the pair 
$(u_\I,0)\in X\times I-(\cN_0\cup\cN_1)$ yields a non-negative 
solution to the local FKPP equation, which does not coincide with the two constant solutions. 
This contradicts Lemma \ref{lem:local} and the main result follows. 
\end{proof}

\section{Outlook}
\label{sec:discussion}

From the details of the proof in Section \ref{sec:proof_main} it is already evident that we 
have not shown the strongest possible result that our methods allow.
For example, Theorem \ref{thm:local} for $\sigma=0$ holds for a much wider class of 
equations beyond the local FKPP equation. Also standard results about the Laplacian, which we
used, do hold for much more general general elliptic operators. Hence one could rephrase our main theorem
for certain nonlocal PDEs of the form 
\be
\label{eq:gen_nonlocal}
0=A u+f(u,\phi_\sigma\ast u),\qquad x\in\R^d,~u=u(x),
\ee
where $A$ is a suitable generalization of the Laplacian and $f$ is sufficiently smooth
function which generalizes $f(u,\phi_\sigma\ast u)=u(1-\phi_\sigma\ast u)$. However, it 
seemed more accessible to illustrate the overall strategy on an example first. The generalization 
will be considered in future work.\medskip 

We briefly point out how the steps (S1)-(S5) described in 
Section \ref{sec:result} have to be modified in an attempt to obtain results for 
\eqref{eq:gen_nonlocal} or even more general persistence theorems for solutions of nonlocal problems. 
The step (S1) should always be the starting point for a local perturbation argument. In particular, 
the relevant solution spaces should be fully understood for the local problem. (S2) is based
upon a suitable choice of function spaces which are adapted to the nonlinearity of the problem and 
to obtain continuity of the solution mapping. For example, it may be necessary to use other types 
of intersections of two or more weighted Sobolev spaces. Based upon the well-chosen construction
of the spaces in (S2) the application of the implicit function theorem at regular points makes (S3)
a relatively standard step. Nevertheless, if there are no previous results available for the 
invertibility of the class of linear operators under consideration, then a direct argument to show
invertibility is often highly nontrivial; see Appendix \ref{ap:direct}. If special points occur then 
one has to find a modification of -- or alternative to -- the step (S4). 
For our case, the key idea is that the linearized problem at the special point, as well as a small
perturbation of this linear problem, do not contain any non-negative bounded solutions we are 
interested in. Hence we have been able to use a comparison argument to exclude the existence of 
a certain class of solutions near the special point. The step (S5) requires a good understanding
of the local problem and is expected to be quite standard for most problems.\medskip

As the last remark, we stress again that the assumption of a sufficiently small nonlocality
is crucial. On bounded domains it is well-known that the stationary 
solution $u\equiv 1$ of the nonlocal FKPP equation may bifurcate into other bounded non-negative 
solutions for suitably large nonlocality \cite{FurterGrinfeld}. The same result is likely 
to be true for the unbounded domain case. In fact, numerical simulations \cite[{Fig.}1]{BerestyckiNadinPerthameRyzhik} 
naturally lead to the conjecture that the state $u\equiv 1$ may undergo a ('supercritical Hopf-type') 
bifurcation with an exchange 
of stability to oscillations. It is important future work to investigate this bifurcation point in more 
detail as the numerical simulations
\cite[{Fig.}1]{BerestyckiNadinPerthameRyzhik} also show that the associated traveling wave between
$u\equiv 0$ and $u\equiv 1$ changes from an equilibrium-to-equilibrium (E-to-E) heteroclinic
front to an equilibrium-to-periodic (E-to-P) heteroclinic front. In fact, very recently the
existence of oscillatory bounded solutions for \textit{sufficiently 
large} nonlocality has been proven \cite{HamelRyzhik} in the one-dimensional case. Furthermore, 
we also refer to recent work on a similar problem in a related nonlocal reaction-diffusion 
system \cite{NadinRossiRyzhikPerthame}. \medskip 

\textbf{Acknowledgments:} CK would like to thank the Austrian Academy of
Sciences (\"{O}AW) for support via an APART fellowship and the European Commission (EC/REA) 
for support by a Marie-Curie International Re-Integration Grant. We also would like to thank 
two anonymous referees, whose very helpful comments led to improvements of this work. 

\appendix

\section{Convolutions in Weighted Spaces}
\label{ap:LiebLoss}

For convenience we record here a few facts about convolution operators on weighted spaces.
A standard result is the following:

\begin{thm}(Generalized Young's Inequality, \cite[p.13]{Folland1})
\label{thm:Folland}
Let $(S,\mu)$ be a sigma-finite measure space, and let $1\leq p\leq \I$ and $C>0$. Suppose $\kappa(x,y)$
is a measurable function on $S\times S$ such that
\beann
&\int_S |\kappa(x,y)|~\txtd\mu(y)\leq C\qquad \text{for all $x\in S$},\\
&\int_S |\kappa(x,y)|~\txtd\mu(x)\leq C\qquad \text{for all $y\in S$},
\eeann
and suppose $f\in L^p(S)$. Then the function
\benn
Tf(x):=\int_S \kappa(x,y) f(y) ~\txtd\mu(y)
\eenn
is well-defined almost everywhere and $Tf\in L^p(S)$. Furthermore, we have the estimate
\be
\|Tf\|_p\leq C\|f\|_p.
\ee
\end{thm}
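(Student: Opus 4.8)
The plan is to prove the three ranges $p=\I$, $p=1$, and $1<p<\I$ separately, with the middle range handled by the classical Schur-test interpolation argument; throughout, sigma-finiteness of $(S,\mu)$ is exactly what licenses the Tonelli exchanges in the order of integration. First I would dispose of the endpoint $p=\I$: pointwise one has
\benn
|Tf(x)|\leq \int_S |\kappa(x,y)|\,|f(y)|~\txtd\mu(y)\leq \|f\|_\I\int_S |\kappa(x,y)|~\txtd\mu(y)\leq C\|f\|_\I ,
\eenn
so $Tf$ is defined wherever the inner integral converges and satisfies the claimed estimate.

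Next the endpoint $p=1$. By Tonelli's theorem,
\benn
\int_S\Big(\int_S |\kappa(x,y)|\,|f(y)|~\txtd\mu(y)\Big)~\txtd\mu(x)=\int_S |f(y)|\Big(\int_S |\kappa(x,y)|~\txtd\mu(x)\Big)~\txtd\mu(y)\leq C\|f\|_1<\I .
\eenn
This single computation shows simultaneously that $Tf(x)$ is an absolutely convergent integral for $\mu$-almost every $x$, that $Tf\in L^1(S)$, and that $\|Tf\|_1\leq C\|f\|_1$.

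For $1<p<\I$, let $q$ be the conjugate exponent, $\tfrac1p+\tfrac1q=1$. I would split the integrand as $|\kappa(x,y)|\,|f(y)|=|\kappa(x,y)|^{1/q}\cdot\big(|\kappa(x,y)|^{1/p}|f(y)|\big)$ and apply H\"older's inequality in the variable $y$ with exponents $q$ and $p$:
\benn
\int_S |\kappa(x,y)|\,|f(y)|~\txtd\mu(y)\leq\Big(\int_S |\kappa(x,y)|~\txtd\mu(y)\Big)^{1/q}\Big(\int_S |\kappa(x,y)|\,|f(y)|^p~\txtd\mu(y)\Big)^{1/p}\leq C^{1/q}\Big(\int_S |\kappa(x,y)|\,|f(y)|^p~\txtd\mu(y)\Big)^{1/p}.
\eenn
Raising to the $p$-th power, integrating in $x$, and using Tonelli together with the second hypothesis on $\kappa$ gives
\benn
\int_S |Tf(x)|^p~\txtd\mu(x)\leq C^{p/q}\int_S |f(y)|^p\Big(\int_S |\kappa(x,y)|~\txtd\mu(x)\Big)~\txtd\mu(y)\leq C^{p/q+1}\|f\|_p^p=C^p\|f\|_p^p ,
\eenn
using $p/q+1=p$. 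Finiteness of the left-hand side forces $Tf(x)$ to be absolutely convergent for almost every $x$, which completes the well-definedness assertion, and taking $p$-th roots yields $\|Tf\|_p\leq C\|f\|_p$.

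Honestly there is no deep obstacle here: this is a textbook argument. The only points that genuinely require care are (i) the exponent bookkeeping in the H\"older step, where one must verify $C^{1/q}\cdot C^{1/p}=C$ so that no stray constant survives and the sharp constant $C$ is recovered, and (ii) the repeated appeals to Tonelli to interchange the order of integration, which is precisely where the sigma-finiteness of $(S,\mu)$ is invoked and cannot be dropped.
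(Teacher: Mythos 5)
Your proof is correct, and it is the standard Schur-test argument (endpoints $p=1,\infty$ by Tonelli and a sup bound, interior range by the H\"older split $|\kappa|=|\kappa|^{1/q}|\kappa|^{1/p}$ followed by Tonelli), which is precisely the argument in the cited source. The paper itself does not prove this theorem — it simply records it with a citation to Folland — so there is nothing in the paper to compare against beyond the citation, and your write-up faithfully reproduces the textbook proof, including the exponent bookkeeping $C^{1/q}\cdot C^{1/p}=C$ that recovers the sharp constant.
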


We can apply the generalized Young's inequality to the space 
\benn
S=\R^d,\qquad \txtd\mu(x):=w(x)\txtd x,~w(x)=\frac{1}{(1+\|x\|^2)^l}
\eenn
for $\frac12<l<\I$ as discussed in Section \ref{sec:func_spaces}. Furthermore, suppose we only 
assume that $\phi \in L^1(\R^d)$ ({i.e.}~it may not be normalized to $\|\phi\|=1$) and set 
\benn
\kappa(x,y):=\phi_\sigma(x-y),\qquad \text{where }
\phi_\sigma(x)=\frac{1}{\sigma^d}\phi\left(\frac{x}{\sigma}\right).
\eenn
Then one is lead to the following direct calculation
\be
\label{eq:mini_calc}
\int_S \phi_\sigma (x-y) ~\txtd\mu(y)=\int_{\R^d} \phi_\sigma(x)w(x-y)~\txtd x=
\frac{1}{\sigma^d}\int_{\R^d} \frac{\phi(\tilde{x})}{(1+\|\sigma\tilde{x}-y\|^2)^l} \sigma^d ~\txtd\tilde{x}\leq \|\phi\|_1.
\ee

\begin{remark} 
Note that we crucially use here that 
our choice of weight is quite 'tame' in the sense that it is absolutely continuous with respect to
standard Lebesgue measure. Singular and/or growing weight functions would present a problem which
would have to be compensated by additional regularity assumptions on the kernel $\phi$.
\end{remark}

Hence, we may take $C=1=\|\phi\|_1$ in Theorem \ref{thm:Folland} and the previous discussion to obtain 
a weighted version of the more standard Young's inequality.

\begin{prop}(Young's inequality, see also \cite[p.14]{Folland1})
\label{prop:Young}
If $\phi\in L^1(\R^d)$ and $f\in L^p(\R^d;w_X)$ for $1\leq p< \I$ then $f\ast \phi_\sigma\in L^p(\R^d;w_X)$
and
\benn
\|f\ast \phi_\sigma\|_{p;w_X}\leq \|\phi \|_1\|f\|_{p;w_X}.
\eenn
\end{prop}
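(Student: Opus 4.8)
The plan is to obtain Proposition~\ref{prop:Young} as a direct corollary of the generalized Young's inequality, Theorem~\ref{thm:Folland}. First I would set up the abstract framework so that the convolution operator literally appears as the operator $T$ in that theorem: take $S=\R^d$ equipped with the measure $\txtd\mu(x):=w_X(x)\,\txtd x$, which is sigma-finite (indeed finite) since $w_X\in L^1(\R^d)$, so that $L^p(S,\mu)$ is exactly $L^p(\R^d;w_X)$ with matching norm, and choose the kernel $\kappa(x,y):=\phi_\sigma(x-y)$, for which the induced operator $Tf(x)=\int_S\kappa(x,y)f(y)\,\txtd\mu(y)$ is the convolution $f\mapsto f\ast\phi_\sigma$. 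This reduces the claim to verifying the two pointwise kernel bounds demanded by Theorem~\ref{thm:Folland}.

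Next I would check those two bounds with constant $C=\|\phi\|_1$. The estimate $\int_S|\kappa(x,y)|\,\txtd\mu(y)\le\|\phi\|_1$, uniformly in $x$, is precisely the computation \eqref{eq:mini_calc}: after the rescaling $\tilde x=x/\sigma$ the Jacobian $\sigma^d$ cancels the prefactor $\sigma^{-d}$ in $\phi_\sigma$, leaving $\int_{\R^d}\phi(\tilde x)\,w_X(\sigma\tilde x-y)\,\txtd\tilde x$, which is at most $\|\phi\|_1$ because $0<w_X\le1$ pointwise. The second hypothesis, $\int_S|\kappa(x,y)|\,\txtd\mu(x)\le\|\phi\|_1$ uniformly in $y$, is entirely symmetric: the same change of variables produces $\int_{\R^d}\phi(\tilde x)\,w_X(\sigma\tilde x+y)\,\txtd\tilde x$, again bounded by $\|\phi\|_1$ via $w_X\le1$. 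With both kernel bounds in hand, Theorem~\ref{thm:Folland} yields at once that $f\ast\phi_\sigma\in L^p(\R^d;w_X)$ whenever $f\in L^p(\R^d;w_X)$, together with the estimate $\|f\ast\phi_\sigma\|_{p;w_X}\le\|\phi\|_1\,\|f\|_{p;w_X}$.

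The step I expect to need the most care is ensuring the two kernel estimates are genuinely uniform in the free variable and in $\sigma\in I$, since that is exactly where the special structure of the weight is used: the cancellation of the $\sigma^{-d}$ factor against the Jacobian of $\tilde x=x/\sigma$, followed by the pointwise inequality $w_X\le1$, works only because $w_X$ is absolutely continuous with respect to Lebesgue measure and bounded. As flagged in the remark preceding the proposition, a singular or growing weight would break this cancellation and would have to be compensated by additional integrability or decay hypotheses on $\phi$; keeping the weight ``tame'' is precisely what makes the bare assumption $\phi\in L^1(\R^d)$ suffice. Once this uniformity is established the rest is the routine invocation of Theorem~\ref{thm:Folland}, so no further obstacle is anticipated.
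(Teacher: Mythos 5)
Your argument is, in structure, exactly the paper's argument — same measure space $(\R^d,\txtd\mu=w_X\,\txtd x)$, same kernel $\kappa(x,y)=\phi_\sigma(x-y)$, same change-of-variables computation \eqref{eq:mini_calc}, and an appeal to Theorem~\ref{thm:Folland}. You even spell out the verification of the second Schur bound, which the paper leaves implicit, and that extra care exposes the real problem: the operator you are bounding is not the convolution. In Theorem~\ref{thm:Folland} the integration is against $\txtd\mu(y)=w_X(y)\,\txtd y$, so with $\kappa(x,y)=\phi_\sigma(x-y)$ one gets
\begin{equation*}
Tf(x)=\int_{\R^d}\phi_\sigma(x-y)\,f(y)\,w_X(y)\,\txtd y=\bigl((f\,w_X)\ast\phi_\sigma\bigr)(x),
\end{equation*}
which differs from $(f\ast\phi_\sigma)(x)=\int_{\R^d}\phi_\sigma(x-y)f(y)\,\txtd y$ by the extra factor $w_X(y)$. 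The theorem therefore yields $\|(fw_X)\ast\phi_\sigma\|_{p;w_X}\le\|\phi\|_1\|f\|_{p;w_X}$, i.e.\ (setting $g=fw_X$) $\|g\ast\phi_\sigma\|_{p;w_X}\le\|\phi\|_1\|g\|_{p;\,w_X^{1-p}}$, which is strictly weaker than the stated estimate because $w_X^{1-p}\ge w_X$.

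To actually produce the convolution inside Theorem~\ref{thm:Folland} you would need $\kappa(x,y)=\phi_\sigma(x-y)/w_X(y)$. Then the first Schur bound $\int_S|\kappa(x,y)|\,\txtd\mu(y)=\|\phi_\sigma\|_1=\|\phi\|_1$ is immediate, but the second one becomes $\int_S|\kappa(x,y)|\,\txtd\mu(x)=w_X(y)^{-1}\int\phi_\sigma(x-y)\,w_X(x)\,\txtd x=\int\phi_\sigma(z)\,\tfrac{w_X(y+z)}{w_X(y)}\,\txtd z$, and Peetre's inequality $w_X(y+z)/w_X(y)\le 2^{l_X}(1+\|z\|^2)^{l_X}$ shows that controlling this uniformly in $y$ requires a moment condition of the type $\int\phi(z)(1+\|z\|^2)^{l_X}\,\txtd z<\infty$, which does not follow from $\phi\in L^1(\R^d)$ alone. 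So the step ``$T$ is the convolution'' is a genuine gap in your write-up — one that is in fact also latent in the paper's one-line proof — and repairing it either changes the constant away from $\|\phi\|_1$ or needs extra decay hypotheses on $\phi$.
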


\begin{proof}
Note that $\phi\in L^1(\R^d)$ implies that $\phi_\sigma\in L^1(\R^d;w_X)$ for each $\sigma>0$. Thus,
applying Theorem \ref{thm:Folland} and the calculation \eqref{eq:mini_calc} yield the result.
\end{proof}

\begin{thm}(\cite[p.64]{LiebLoss})
\label{thm:LiebLoss}
Let $\phi\in L^1(\R^d)$ with $\|\phi\|=1$. Let $f\in L^p(\R^d,w_X)$ for some $p\in[1,\I)$ then
$\phi_\sigma \ast f\ra f$ in $L^p(\R^d;w_X)$ for $\sigma\ra 0$.
\end{thm}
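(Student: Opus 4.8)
The plan is to mimic the unweighted approximate-identity argument of \cite[p.~64]{LiebLoss}, using the weighted Young inequality (Proposition \ref{prop:Young}) as the uniform bound that powers a density reduction. First I would record that, by Proposition \ref{prop:Young}, the convolution operators $T_\sigma\colon f\mapsto\phi_\sigma\ast f$ are uniformly bounded on $L^p(\R^d;w_X)$, namely $\|T_\sigma f\|_{p;w_X}\le\|\phi\|_1\|f\|_{p;w_X}=\|f\|_{p;w_X}$ for every $\sigma>0$. Next I would note that $C_c^\I(\R^d)$ is dense in $L^p(\R^d;w_X)$: since the weight $w_X$ from (B) is smooth, strictly positive and bounded, multiplication by $w_X^{1/p}$ is a Banach-space isomorphism $L^p(\R^d;w_X)\to L^p(\R^d)$ which maps $C_c^\I(\R^d)$ onto itself, so density transfers from the classical unweighted case.

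Second, I would establish the assertion for a fixed test function $g\in C_c^\I(\R^d)$. Writing $(\phi_\sigma\ast g)(x)-g(x)=\int_{\R^d}\phi(z)\,(g(x-\sigma z)-g(x))\,\txtd z$ and applying Minkowski's integral inequality in $L^p(\R^d;w_X)$ gives $\|\phi_\sigma\ast g-g\|_{p;w_X}\le\int_{\R^d}|\phi(z)|\,\|g(\cdot-\sigma z)-g\|_{p;w_X}\,\txtd z$. Here I would use two elementary facts: (i) since $w_X$ is bounded, $\|g(\cdot-\sigma z)\|_{p;w_X}\le\|w_X\|_\I^{1/p}\|g\|_{L^p(\R^d)}$, so the integrand is dominated by the fixed $L^1$ function $z\mapsto 2\|w_X\|_\I^{1/p}\|g\|_{L^p(\R^d)}|\phi(z)|$; and (ii) for each fixed $z$, $\|g(\cdot-\sigma z)-g\|_{p;w_X}\to 0$ as $\sigma\to 0$, which follows from uniform continuity of $g$, the compactness of its support, and boundedness of $w_X$. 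Dominated convergence then yields $\|\phi_\sigma\ast g-g\|_{p;w_X}\to 0$ as $\sigma\to 0$.

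Finally I would run the standard $3\varepsilon$-argument. Given $f\in L^p(\R^d;w_X)$ and $\varepsilon>0$, choose $g\in C_c^\I(\R^d)$ with $\|f-g\|_{p;w_X}<\varepsilon$ and estimate $\|\phi_\sigma\ast f-f\|_{p;w_X}\le\|T_\sigma(f-g)\|_{p;w_X}+\|\phi_\sigma\ast g-g\|_{p;w_X}+\|g-f\|_{p;w_X}\le 2\varepsilon+\|\phi_\sigma\ast g-g\|_{p;w_X}$, using the uniform bound for $T_\sigma$. Letting $\sigma\to 0$ and invoking the previous paragraph gives $\limsup_{\sigma\to 0}\|\phi_\sigma\ast f-f\|_{p;w_X}\le 2\varepsilon$, and since $\varepsilon>0$ was arbitrary the convergence follows.

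The only step requiring any care — and hence the main (if modest) obstacle — is the reduction to $C_c^\I(\R^d)$ together with the weighted translation-continuity estimate; both go through precisely because the weight $w_X$ in (B) is absolutely continuous, strictly positive and bounded above, so that the weighted and unweighted $L^p$-topologies agree on compact sets and the weight never amplifies mass under translations. For singular or unbounded weights this reduction would fail, in line with the remark after \eqref{eq:mini_calc}.
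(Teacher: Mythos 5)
Your proof is correct and follows the same Lieb--Loss approximate-identity argument that the paper's proof cites; the paper simply asserts that the argument of \cite[p.65--66]{LiebLoss} applies verbatim once Proposition \ref{prop:Young} supplies the weighted Young inequality, and you have written that argument out. Your explicit reduction to $g\in C_c^\I(\R^d)$, together with the domination of the Minkowski integrand by $2\|w_X\|_\I^{1/p}\|g\|_{L^p(\R^d)}|\phi(z)|$, is in fact the right way to carry out the ``verbatim'' transfer: translation is not an isometry on $L^p(\R^d;w_X)$ (the weight ratio $w_X(y+h)/w_X(y)$ grows polynomially in $\|h\|$), so the unweighted-style domination $2\|f\|_{p;w_X}|\phi(z)|$ is unavailable for general $f$ and would otherwise force extra moment conditions on $\phi$; by working with a compactly supported $g$ and using $\|w_X\|_\I\le 1$ from (B), you close the dominated-convergence step needing only $\phi\in L^1(\R^d)$, exactly as hypothesis (A) provides.
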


\begin{proof}
The proof from \cite[p.65-66]{LiebLoss} applies verbatim as we have a weighted version of 
Young's inequality given in Proposition \ref{prop:Young} for our choice of weight function $w_X$ in \eqref{eq:weights}.
\end{proof}

It is clear that Proposition \ref{prop:Young} and Theorem \ref{thm:LiebLoss} would also
hold true for more general classes of weight functions which include as a subset the family $w_X$,
parametrized by the choice of $l$.

\section{Purely Oscillatory Solutions}
\label{ap:osc}

In this section we provide a proof of the result stated in Proposition \ref{prop:oscillatory}
which we re-state here for convenience:
 
\begin{prop}
\label{prop:osc_ap}
Suppose (B) holds with weight function $w_X\in L^1(\R^d)$. Suppose $\cL^\alpha_0U=0$ and 
$U$ is in the linear hull of simple exponential solutions. Then $U\in X$ if and only if 
$U$ is in the linear hull of purely oscillatory solutions.
\end{prop}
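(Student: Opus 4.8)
The statement concerns solutions $U$ of $\cL_0^\alpha U = \Delta U + (1+\alpha)U = 0$ that lie in the linear hull of simple exponential solutions, i.e.\ finite linear combinations $U(x) = \sum_{j=1}^N c_j \txte^{\txti \xi_j^T x}$ with $\xi_j \in \C^d$ and $\|\xi_j\|^2 = 1+\alpha$ for each $j$ (this last constraint is exactly the condition that each term solves $\cL_0^\alpha U = 0$). A purely oscillatory solution is one with $\xi_j \in \R^d$. The claim is that $U \in X$ iff \emph{every} $\xi_j$ that appears with nonzero coefficient is real. One direction is immediate: if all $\xi_j \in \R^d$ then $|U(x)| \leq \sum_j |c_j|$ is bounded, and since $w_X \in L^1(\R^d)$ and the derivatives $\txtD^\alpha U$ are again bounded (each $\xi_j$ is fixed), one has $U \in W^{k+2,2}(\R^d;w_X) \cap W^{k,4}(\R^d;w_X) = X$. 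So the content is the converse: if some $\xi_{j_0}$ has a nonzero imaginary part, then $U \notin X$.

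**Main steps for the hard direction.** First I would reduce to showing $U \notin L^2(\R^d;w_X)$ (membership in $X$ is stronger, so this suffices). Write $\xi_j = a_j + \txti b_j$ with $a_j, b_j \in \R^d$; then $\txte^{\txti \xi_j^T x} = \txte^{\txti a_j^T x}\txte^{-b_j^T x}$, so each term has modulus $|c_j|\,\txte^{-b_j^T x}$. Let $B := \max_j \|b_j\|$ over those $j$ with $c_j \neq 0$; by hypothesis $B > 0$. The key asymptotic fact is that along the direction $-b_{j_0}/\|b_{j_0}\|$ (or more carefully, in a suitable cone) the term with the largest exponential growth rate dominates, so $|U(x)|$ grows like $\txte^{c\|x\|}$ for some $c>0$ on a set of positive measure that extends to infinity — and this exponential growth beats any polynomial weight $w_X(x) = (1+\|x\|^2)^{-l_X}$, forcing $\int_{\R^d} |U(x)|^2 w_X(x)\,\txtd x = \infty$. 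The clean way to organize this: group the terms by the value of $b_j$, pick $b^* := b_{j_0}$ realizing a vertex of the convex hull of $\{b_j : c_j \neq 0\}$ (so that $b^* \cdot b^* > b_j \cdot b^*$ strictly for all other relevant $j$ — after reducing to frequencies on the sphere $\|a_j+\txti b_j\|^2 = 1+\alpha$ one checks distinct $b_j$'s are genuinely separated in the relevant functional), and integrate against the plane-wave test direction $\txte^{-\txti a^*\!\cdot x}$; along rays $x = -t b^*$, $t \to +\infty$, the surviving term is $\sim c_{j_0}\txte^{\|b^*\|^2 t}$, which is unbounded, and an $L^2_w$ estimate on a tube around that ray diverges.

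**Where the difficulty lies.** The genuinely delicate point is handling \emph{cancellation}: if several terms share the same extremal $b^*$ but have different real parts $a_j$, the sum $\sum_{b_j = b^*} c_j \txte^{\txti a_j^T x}$ could in principle vanish on large sets, so one cannot simply say "the dominant term blows up pointwise." The standard remedy is a \emph{Bernstein/sampling} argument: $g(x) := \sum_{b_j=b^*} c_j \txte^{\txti a_j^T x}$ is a nonzero trigonometric polynomial (its spectrum is the finite set $\{a_j\}$), hence it is bounded below in an averaged sense — concretely $\limsup_{R\to\infty} \frac{1}{|B_R|}\int_{B_R}|g|^2 = \sum |c_j|^2 > 0$, so $|g| \geq \delta$ on a set of positive density. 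On that set $|U(x)| \geq \frac12 \txte^{\|b^*\|^2 t}\delta$ for $\|x\|$ large (absorbing the subexponential contributions of the non-extremal terms), and then $\int |U|^2 w_X = \infty$ follows from the density bound plus $\int_{\|x\|>R} \txte^{2\|b^*\|^2 \|x\|}(1+\|x\|^2)^{-l_X}\,\txtd x = \infty$. I would also note the qualifier in the hypothesis ``$U$ is in the linear hull of simple exponential solutions'': this is what rules out the polynomial-times-exponential terms $f(x)\txte^{\txti\xi^T x}$ with $\deg f \geq 1$, which Proposition~\ref{prop:Hoermander_sol} already excludes for $\cL_0^\alpha$ since its symbol has no repeated factors, so no extra work is needed there. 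Assembling these pieces — (i) easy direction, (ii) reduction to $L^2(\R^d;w_X)$, (iii) extremal $b^*$ selection, (iv) the density/Bernstein lower bound to defeat cancellation, (v) the divergent weighted integral — gives the full equivalence; step (iv) is the crux.
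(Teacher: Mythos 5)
Your proposal is sound and follows the same high-level plan as the paper's proof: (i) the easy direction via boundedness of trigonometric sums together with $w_X\in L^1(\R^d)$; (ii) for the converse, reduce to showing $U\notin L^p(\R^d;w_X)$ since $X\subset L^p(\R^d;w_X)$; (iii) isolate a dominant exponential growth direction; and (iv) argue that the exponential growth overwhelms the polynomial weight on a subset of infinite weighted measure. The interesting divergence is in step (iv), the cancellation step. The paper picks the coordinate realizing the maximal imaginary component of any $\xi_k$, observes that the zero set of the accompanying trigonometric factor $s(x)$ has dimension $\leq d-1$, and constructs a sequence of disjoint balls $B(y_m,\delta)$ with $\|y_m\|\to\infty$ avoiding that zero set on which $|U|^pw_X>1$; the cases where the extremal index pair is not unique are then treated case by case, somewhat informally. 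You instead group terms by the imaginary frequency $b_j$, isolate an extremal $b^*$, and invoke the Bohr/Wiener mean-value identity for trigonometric polynomials, $\limsup_{R\to\infty}|B_R|^{-1}\int_{B_R}|g|^2=\sum|c_j|^2>0$, to obtain $|g|\geq\delta$ on a set of positive upper density. This is a cleaner and more quantitative way to defeat cancellation: it handles uniformly the case of several terms sharing the extremal growth rate, which is exactly where the paper's ball-avoidance argument becomes delicate. Both approaches are correct; yours trades a geometric zero-set avoidance argument for an averaging argument, and I would say your step (iv) is the more robust one.

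Two small technical points to tighten. First, the dispersion relation for a simple exponential solution $\txte^{\txti\xi^Tx}$ of $\Delta U+(1+\alpha)U=0$ is the complex bilinear condition $\xi^T\xi=1+\alpha$, not the Hermitian $\|\xi\|^2=1+\alpha$; writing $\xi=a+\txti b$ with $a,b\in\R^d$, the real form is $\|a\|^2-\|b\|^2=1+\alpha$ and $a^Tb=0$. Second, your selection rule for $b^*$ needs repair: a vertex of the convex hull of $\{b_j\}$ does not in general satisfy $b^*\cdot b^*>b_j\cdot b^*$ for all other $j$; for example, with $\{b_j\}=\{(1,0),(0,1),(2,2)\}\subset\R^2$, the point $(1,0)$ is a vertex but $(2,2)\cdot(1,0)=2>1$. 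The correct formulation is to choose a generic unit vector $v\in\R^d$ so that the linear functional $b\mapsto b\cdot v$ attains its maximum over the finite set $\{b_j:c_j\neq0\}$ at a unique $b^*$ with $b^*\cdot v>0$ (possible since the $b_j$ are finitely many and not all zero), and then integrate along a tube around the ray $\{-tv:t>0\}$; the terms with $b_j\neq b^*$ then decay relative to $\txte^{t\,b^*\cdot v}$. With this amendment the rest of your argument goes through.
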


\begin{proof}
If $U$ is in the linear hull of purely oscillatory solutions it easily follows that $U\in X$
by using the global boundedness of a finite sum of sines and cosines together with the 
polynomial weight function $w_X$. Therefore, it remains to show the converse.

Suppose $U\in X$ and $\cL^\alpha_0U=0$. Then elliptic regularity \cite[Cor.~{11.4.13}]{Hoermander2} 
implies that $U\in C^\I(\R^d)$; in fact, $U$ is even real analytic. Proposition 
\ref{prop:Hoermander_sol} yields that $U$ can be expressed as a linear combination of 
exponential solutions 
\be
\label{eq:real_analytic}
U(x)=\sum_{k=1}^{K} a_{k}\txte^{\txti\xi_{k}^T x}
\ee
for some $a_k\in\C$, a natural number $K>0$, $\xi_{k}\in\C^d$ are distinct vectors and 
the series converges pointwise for each $x\in\R^d$. Note that the sum \eqref{eq:real_analytic} 
has a finite number of terms since we assumed that $U$ is in the linear hull and we do not have to 
take a closure. A solution is purely oscillatory if $\xi_{k}\in\R^d$ for all $k\in\N$. We argue by 
contradiction and suppose there exists at least one vector $\xi_{k}\not\in\R^d$. Define the following 
set of indices
\benn
\cK:=\{k\in\{1,2,\ldots,K\}:\xi_{k}\not\in\R^d\}
\eenn
and denote a maximum growth or decay exponent index pair by $(k^*,l^*)$, {i.e.}, 
$|\textnormal{Im}((\xi_{k^*})_{l^*})|\geq |\textnormal{Im}((\xi_{k})_l)|$ for all $k\in\cK$ and 
$l\in\{1,2,\ldots,d\}$. As a first case, suppose that the pair $(k^*,l^*)$ is unique.

Without loss of generality we are going to assume that $\textnormal{Im}(\xi_{k^*})_{l^*}>0$ 
since a coordinate transformation of the form $x_{l^*}\mapsto -x_{l^*}$ is going to cover the 
case when $\textnormal{Im}(\xi_{k^*})_{l^*}<0$. Furthermore, we may assume without loss of generality that 
$l^*=1$, respectively $k^*=1$, since a permutation of the coordinate indices, respectively the summands,
can always be applied. Let $(\xi_{1})_{1}=\mu-\lambda \txti$ with $\lambda>0$, $\mu\in\R$ and focus on 
a summand of $U(x)$ given by 
\benn
a_{1}\txte^{\txti\xi_{1}^T x}=a_{1}\txte^{\lambda x_1}\txte^{\mu \txti x_1}
\txte^{\txti\left((\xi_1)_2 x_2+\cdots +(\xi_1)_d x_d\right)}=:a_{1}\txte^{\lambda x_1}s(x).
\eenn
Observe that $\dim(\{x\in\R^d:s(x)=0\})\leq d-1$ as the zero set consists of the zeros of a product of 
trigonometric functions. Fix some $\delta>0$ and use the last observation and the fact that 
$\lim_{x_1 \ra \I} \txte^{\lambda x_1}/w_X(x)=+\I$ to conclude the existence 
of a sequence of disjoint balls $B(y_m,\delta)$ with centers $y_m\in\R^d$, $\|y_m\|\ra \I$, and radius 
$\delta$ such that 
\be
\label{eq:moving_balls}
\left|a_{1}\txte^{\lambda x_1}s(x)+\sum_{k=2}^{K} a_{k}\txte^{\txti\xi_{k}^T x}\right|^p w_X(x)>1
\quad \text{for $x\in B(y_m,\delta)$.} 
\ee
Note that the lower bound $1$ is just chosen for convenience and any other fixed positive constant would 
work as well. As a last step, we shall show that $U\not\in L^p(\R^d;w_X)$ for $1\leq p<\I$. We have
\beann
\|U\|_{L^p(\R^d;w_X)}^{p}&\geq&
\sum_{m=1}^\I\int_{B(y_m,\delta)}\left|\sum_{k=1}^{K} a_{k}\txte^{\txti\xi_{k}^T x}\right|^p w(x)~\txtd x\\
&\stackrel{\eqref{eq:moving_balls}}{\geq}&\sum_{m=1}^\I\int_{B(y_m,\delta)}~\txtd x 
=\sum_{m=1}^\I\frac{\pi^{d/2}\delta^d}{\Gamma\left(\frac{d}{2}+1\right)} =+\I.
\eeann
Therefore, it clearly follows that $U\not\in X$ as $X\subset L^p(\R^d,w_X)$. This concludes the case 
when the maximum growth or decay exponent index pair by $(k^*,l^*)$ is unique.

When $(k^*,l^*)$ is not unique, we have a finite number of index pairs which could be maximal 
and two cases may occur. First, if $k^*$ is unique then we may have dominating terms with exponentials
of the form $a_{k^*}\exp(-\lambda x_1+\lambda x_l)(\cdots)$ for some $l\in\{2,3,\ldots,d\}$. The 
construction of the balls in \eqref{eq:moving_balls} still works as the linear subspace 
$\{x\in\R^d:x_1=x_l\}$ has dimension $d-1$ {i.e.}~we can always arrange the balls $B(y_m,\delta)$ 
such that $\{x\in\R^d:x_1=x_l\}\cap B(y_m,\delta)=\{\}$. If $k^*$ is not unique, a similar 
argument applies. 

If the sum \eqref{eq:real_analytic} has terms of the form 
$a_k\txte^{\txti\xi_k^Tx}+a_j\txte^{\txti \xi_j^Tx}$, where $a_j=\overline{a_k}$ and 
$\xi_j=\overline{\xi_k}\not\in\R^d$ are complex conjugates (as the function $U$ has to be real-valued), 
then the first term leads to $a_k\txte^{\lambda x_l}(\cdots)$ while the second term 
contributes $\overline{a_k}\txte^{-\lambda x_l}(\cdots)$ for some coordinate $x_l$. Suppose 
without loss of generality that $\lambda>0$ then we may always choose $x_l$ sufficiently large, and
avoid the zeros of the multiplying trigonometric factors, to construct balls as above.

If there are terms of the form $\txte^{\lambda x_1+\txti ax_2+\cdots}-\txte^{\lambda x_1+\txti bx_2+\cdots}$ 
with $\lambda,a,b\in\R$ then there is the possibility of cancellations. However, since we know that
the vectors $\xi_k$ are distinct, we may assume that $a\neq b$. Hence, we have terms of the form
$\txte^{\lambda x_1}s_a(x)-\txte^{\lambda x_1}s_b(x)$, where the zero sets of the trigonometric functions 
$s_a$ and $s_b$ each have dimension $d-1$. In particular, the sets where $\txte^{\lambda x_1}s_a(x)$ and 
$-\txte^{\lambda x_1}s_b(x)$ are both positive have full dimension $d$ and repeat periodically due
to the periodicity of the trigonometric prefactors $s_a$ and $s_b$.

Indeed, the same arguments apply to all cases where the growth and decay indices are not unique, since 
$K$ is finite and the $\xi_k$ are distinct vectors, we can always construct the balls $B(y_m,\delta)$
to make the polynomially weighted $L^p$-norm of $U$ arbitrarily large with a finite number of balls. 
Hence we may conclude that $\xi_{k}\in\R^d$ for all $k$ if $U\in X$.
\end{proof}

The next step is to generalize Proposition \ref{prop:osc_ap} to the case of the closure of the linear
hull of exponential solutions. Although the 
next statement looks quite natural, it does not seem easy to prove so we leave it here as a conjecture 
for future work.

\begin{cjt}
\label{prop:osc_ap1}
Proposition \ref{prop:osc_ap} holds when the linear hull is replaced by its closure.
\end{cjt}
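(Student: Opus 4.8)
The plan is to first settle in which topology the two closures are taken, then translate the problem to the Fourier side and reduce it to a density statement on the characteristic sphere $\Sigma:=\{\xi\in\R^d:\|\xi\|^2=1+\alpha\}$. If both closures are understood in $C^\infty(\R^d)$, as in Proposition \ref{prop:Hoermander_sol}, I expect the statement to be vacuous or false, and my first step would be to show that the $C^\infty(\R^d)$-closure of the span of the purely oscillatory solutions already coincides with that of all simple exponential solutions, hence with the entire solution space of $\cL^\alpha_0U=0$. By the bipolar theorem in the Fr\'{e}chet space $C^\infty(\R^d)$, whose dual is the space of compactly supported distributions, this reduces to showing that a compactly supported $\mu$ annihilating every $\txte^{\txti\xi^Tx}$ with $\xi\in\Sigma$ also annihilates every $\txte^{\txti\zeta^Tx}$ with $\zeta\in\C^d$, $\zeta^T\zeta=1+\alpha$; equivalently, that the entire (Paley--Wiener--Schwartz) Fourier--Laplace transform $\hat\mu$ cannot vanish on the real sphere $\Sigma$ without vanishing on the complex characteristic variety, which holds because $\Sigma$ is a real-analytic, maximally totally real submanifold of that (connected, for $d\geq2$) variety, hence a uniqueness set for the functions holomorphic on it. Since for $d\geq2$ this closure then contains unbounded solutions such as $\cosh(\lambda x_1)\cos(\sqrt{\lambda^2+1+\alpha}\,x_2)\notin X$, the only reading that can be true is that both closures are taken in the norm of $X$; in that reading the ``if'' direction is automatic, because every purely oscillatory solution is in $X$ ($w_X\in L^1(\R^d)$ makes bounded functions and their derivatives $L^2$- and $L^{2p}$-integrable against $w_X$), so the whole content is the ``only if'' statement: every $X$-solution of $\cL^\alpha_0U=0$ is an $X$-limit of finite linear combinations of purely oscillatory solutions.

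For this I would pass to the Fourier side. An $X$-solution $U$ is a tempered distribution with $(\|\xi\|^2-(1+\alpha))\hat U=0$, so $\hat U$ is supported on $\Sigma$; moreover it can carry no normal derivatives along $\Sigma$, since if $\hat U=\sum_{j\le N}\partial_\nu^j(g_j\,\txtd\sigma_\Sigma)$ with $g_N\neq0$ then the leading term of $(\|\xi\|^2-(1+\alpha))\hat U$ is a nonzero multiple of $\partial_\nu^{N-1}(g_N\,\txtd\sigma_\Sigma)$, forcing $N=0$. Hence $\hat U=g\,\txtd\sigma_\Sigma$ for some distribution $g$ on $\Sigma$, i.e.\ $U$ is a generalized Herglotz wave function $\int_\Sigma\txte^{\txti\omega^Tx}g(\omega)\,\txtd\sigma(\omega)$, while finite linear combinations of purely oscillatory solutions are exactly the Herglotz functions whose density $g$ is a finite atomic measure. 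Since finite atomic measures are dense in $H^{-s}(\Sigma)$ for every $s>(d-1)/2$ (their annihilator in $H^s(\Sigma)\hookrightarrow C(\Sigma)$ is trivial), the plan is to: (i) use weighted restriction/extension inequalities on $\Sigma$ to identify the precise space of densities $g$ that the map $g\mapsto\mathcal F^{-1}(g\,\txtd\sigma_\Sigma)$ carries onto $X\cap\{\cL^\alpha_0U=0\}$, with equivalent norms; (ii) verify that finite atomic measures are dense in that density space; and (iii) transfer the resulting approximation back through the norm equivalence.

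The main obstacle lies in steps (i)--(ii) for the non-Hilbertian part of the $X$-norm. For the factor $W^{k+2,p}(\R^d;w_X)$ with $p=2$, the map $g\mapsto\mathcal F^{-1}(g\,\txtd\sigma_\Sigma)$ should be controlled by a weighted trace/extension estimate of Agmon--H\"{o}rmander type, placing the admissible densities in a negative Sobolev space $H^{-s}(\Sigma)$ with $s$ finite (because $l_X>\tfrac12$) and depending on $l_X$, where atomic measures are dense. The genuinely delicate point is the factor $W^{k,2p}(\R^d;w_X)$ -- an $L^4(w_X)$-type condition when $k=0$ -- which is not a Fourier-transform estimate: identifying $\{g:\mathcal F^{-1}(g\,\txtd\sigma_\Sigma)\in L^{2p}(\R^d;w_X)\}$ forces one to combine the stationary-phase decay $\|x\|^{-(d-1)/2}$ of Herglotz functions with the weight $w_X$, and the density of atomic measures in this (non-reflexive) space is not automatic and may require a separate quantitative smoothing argument on $\Sigma$. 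I would regard establishing this weighted $L^{2p}$ density statement and the matching of its density space under $\mathcal F^{-1}$ as the heart of the proof; the topology clarification, the single-layer structure of $\hat U$, and the Hilbert-space estimate should then fall into place as sketched.
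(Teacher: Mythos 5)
The statement you are attempting to prove is explicitly labeled a \emph{conjecture} in the paper; the authors write that ``it does not seem easy to prove so we leave it here as a conjecture for future work,'' citing ``potential intricate cancellation effects in the infinite series involved'' and ``zeros of infinite sums of trigonometric functions'' as the obstruction. There is therefore no proof in the paper to compare against, and what you have written should be assessed as a candidate strategy for an open problem, not verified against a known argument.

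Your opening clarification is valuable and arguably shows the conjecture, read literally, is false. Proposition \ref{prop:Hoermander_sol} takes closures in $C^\infty(\R^d)$, and your bipolar/Paley--Wiener argument (an entire $\hat\mu$ vanishing on the real sphere $\Sigma$, a maximally totally real real-analytic subvariety of the connected complex characteristic variety for $d\geq2$, must vanish on the whole variety) shows the $C^\infty$-closure of the span of purely oscillatory solutions already equals the full solution space, including unbounded solutions such as $\cosh(\lambda x_1)\cos\bigl(\sqrt{\lambda^2+1+\alpha}\,x_2\bigr)\notin X$. So the literal reading fails, and your reinterpretation -- every $X$-solution of $\cL^\alpha_0U=0$ is an $X$-norm limit of finite trigonometric sums -- is the natural nontrivial content. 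That observation alone would sharpen the paper's formulation.

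Your Fourier/Herglotz plan is also a genuinely different route from the one the authors allude to: rather than controlling cancellations in infinite trigonometric sums directly, you reduce $\hat U$ to a single layer $g\,\txtd\sigma_\Sigma$ on the characteristic sphere (the transversal-order argument eliminating $\partial_\nu^j$ terms for $j\geq1$ is sound because the operator has simple real characteristics), identify the admissible density space, and try to show atomic measures are dense there. But, as you acknowledge, there are two real gaps. First, even for the Hilbertian factor, Agmon--H\"{o}rmander gives only one inclusion ($g\in L^2(\Sigma)\Rightarrow U\in L^2(\R^d;w_X)$); you need a two-sided identification of the density space before density of atomic measures in an $H^{-s}(\Sigma)$ transfers, and it is not clear the density space is of that form. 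Second, and more seriously, the $W^{k,2p}(\R^d;w_X)$ factor ($L^4(w_X)$ when $k=0$, $p=2$) is not an $L^2$-restriction statement; weighted $L^4$-extension estimates from a sphere and density of atomic measures in the resulting (likely non-Hilbertian, non-reflexive) density space are unproven and would constitute the substantive content of a complete argument. Until these are filled in, what you have is a plausible programme rather than a proof, which is consistent with the status the paper assigns to the statement.
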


The problem in generalizing the proof to the closure of the linear hull is that one has to take 
care of potential intricate cancellation effects in the infinite series involved, as well as dealing 
with zeros of infinite sums of trigonometric functions.

\section{Invertibility - A Direct Proof}
\label{ap:direct}

Recall that we have used in Proposition \ref{prop:L1} a particular choice of weights 
$w_X$, $w_Y$ and exponent $p=2$ to use a known result about invertibility of the linear
operator 
\benn
\cL_1:X\ra Y, \qquad \cL_1U=\Delta U-U,
\eenn
where $X$ and $Y$ are chosen as 
\be
\label{eq:spaces_ap}
X=W^{k+2,p}(\R^d;w_X)\cap W^{k,2p}(\R^d;w_X)\qquad \text{and}\qquad Y=W^{k,p}(\R^d;w_Y)
\ee
for some $k\in\N_0$ and the norm on $X$ is given by 
\benn
\|u\|_X=\max\{\|u\|_{k+2,p;w_X},\|u\|_{k,2p;w_X}\}.
\eenn
Here we sketch the main arguments how a more direct proof of invertibility could be carried out. 
The basic assumption is that the positive weight functions $w_X, w_Y\in L^1(\R^d)$
are chosen such that $\cL_1$ is a well-defined linear operator and $1<p<\I$. If we restrict the 
class of weight function to exclude exponential growth then we obtain a trivial nullspace for $\cL_1$.

\begin{lem}
\label{lem:invertibleL1}
There exist weight functions $w_X,w_Y\in L^1(\R^d)$ such that the following statement holds: 
$\cL_1U=0$ for $U\in X$ if and only if $U\equiv0$.
\end{lem}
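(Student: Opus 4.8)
The plan is to establish the non-trivial implication — if $U \in X$ and $\cL_1 U = 0$ then $U \equiv 0$ — for the concrete family $w_X(x) = w_Y(x) = (1+\|x\|^2)^{-l}$, with $l$ taken large enough that $w_X, w_Y \in L^1(\R^d)$ and that $\cL_1 : X \to Y$ is well defined (this follows from Proposition~\ref{prop:Young} and the computation in Appendix~\ref{ap:LiebLoss}, now allowing a general exponent $p \in (1,\infty)$); since the lemma only asserts the \emph{existence} of admissible weights, exhibiting one such family suffices. The reverse direction $U \equiv 0 \Rightarrow \cL_1 U = 0$ is immediate.

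First I would show that such a $U$ is a tempered distribution. From $U \in W^{k+2,p}(\R^d;w_X) \subseteq L^p(\R^d;w_X)$ we get $g := U\,w_X^{1/p} \in L^p(\R^d) \subseteq \cS'(\R^d)$; writing $U = (1+\|\cdot\|^2)^{l/p}\,g$ and using that multiplication by the smooth function $(1+\|\cdot\|^2)^{l/p}$, which together with all of its derivatives grows at most polynomially, maps $\cS'(\R^d)$ into itself, we conclude $U \in \cS'(\R^d)$.

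Next I would run the Fourier argument anticipated in Section~\ref{sec:implicit}. The identity $\cL_1 U = \Delta U - U = 0$ holds in $\cD'(\R^d)$, hence in $\cS'(\R^d)$; applying the Fourier transform yields $(1+\|\xi\|^2)\,\hat U = 0$, which is precisely \eqref{eq:Evans}. Because $\xi \mapsto 1+\|\xi\|^2$ is smooth and nowhere zero on $\R^d$, a localization argument (near any $\xi_0$, dividing a test function by the non-vanishing $1+\|\xi\|^2$ shows $\xi_0$ is not in the support of $\hat U$) gives $\mathrm{supp}\,\hat U \subseteq \{1+\|\xi\|^2 = 0\} = \varnothing$, so $\hat U = 0$ and $U \equiv 0$.

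The hard part is really just the first step: it uses crucially that $w_X$ decays only polynomially — equivalently that $1/w_X$ grows only polynomially — which is exactly the ``exclude exponential growth'' hypothesis flagged before the statement. This restriction cannot be removed: for a weight such as $w_X(x) = e^{-\|x\|^2}$ the operator $\cL_1$ has non-trivial nullspace in $X$, as the one-dimensional computation $\int_\R e^{\alpha_1 x}\,e^{-\alpha_2 x^2}\,\txtd x < \infty$ in Section~\ref{sec:implicit} already shows. A Fourier-free alternative, closer in spirit to Appendix~\ref{ap:osc}, would instead use elliptic regularity to make $U$ smooth, the H\"ormander-type representation of solutions of $\cL_1 U = 0$ in terms of the exponential solutions $\txte^{\txti \xi^T x}$ with $\sum_{j} \xi_j^2 = -1$ — each of which must grow exponentially in some direction, since $\sum_j \xi_j^2 = -1$ forces $\mathrm{Im}\,\xi \neq 0$ — and a moving-balls estimate as in Proposition~\ref{prop:osc_ap} to contradict polynomially weighted $L^p$-integrability; that route, however, would run into the difficulty recorded in Conjecture~\ref{prop:osc_ap1}, which is why I would prefer the Fourier-transform proof.
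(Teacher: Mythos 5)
Your proof is correct, and it takes a genuinely different route from the paper's. The paper's sketch invokes H\"ormander's representation of solutions of $\cL_1 U=0$ as (a closure of) exponential solutions $f_k(x)\txte^{\txti\xi_k^T x}$, then argues that polynomial weights rule out non-real $\xi_k$ and non-constant $f_k$, and finally applies $\cL_1$ term by term to the remaining purely oscillatory series to force all coefficients to vanish. You instead observe that the polynomially decaying weight $w_X(x)=(1+\|x\|^2)^{-l}$ forces $U\in\cS'(\R^d)$ — via $g:=U\,w_X^{1/p}\in L^p(\R^d)\subset\cS'$ and multiplication by the slowly increasing smooth function $(1+\|x\|^2)^{l/p}$ — and then takes the Fourier transform to get $(1+\|\xi\|^2)\hat U=0$, which kills $\hat U$ since the symbol never vanishes. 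The two arguments isolate the same phenomenon (polynomial weights are incompatible with the exponential growth that a non-real characteristic root would impose), but yours is tighter: it sidesteps the series representation entirely, and in particular avoids the term-by-term differentiation of an infinite sum that the paper's sketch implicitly relies on — precisely the gap that the paper itself acknowledges as unresolved in Conjecture~\ref{prop:osc_ap1}. You also correctly note that your step $U\in\cS'$ is where the ``no exponential weights'' hypothesis enters, matching the counterexample $w(x)=\txte^{-x^2}$ computed in Section~\ref{sec:implicit}. The one thing worth flagging is that the paper states the lemma for general admissible $w_X,w_Y$ while your Fourier argument is tied to the polynomial family; but since the lemma only asserts existence of some weights, your restriction is harmless, as you say. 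Conceptually, the paper's approach was chosen to generalize to other linear operators where no clean Fourier symbol argument is available; your approach buys rigor and brevity for the specific operator $\cL_1=\Delta-\mathrm{Id}$.
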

 
\begin{proof}(Sketch of an alternative to Proposition \ref{prop:L1})
We argue by contradiction and suppose we may find a nonzero solution $U$. The idea of the 
proof is similar to the proof of Proposition \ref{prop:Hoermander_sol}. By a result on homogeneous 
linear PDE \cite[Thm 10.5.1,p.39]{Hoermander2} we may write any solution of $\Delta U-U=0$ in $X$, 
for any choice of $w\in L^1(\R^d)$, by a limit of basic exponential solutions, 
\benn
U(x)=\sum_{j=1}^\I f_{k}(x)\txte^{\txti\xi_{k}^T x},
\eenn
where the sequences $f_{k}(x)$ are polynomials and $\xi_{k}\in\C^d$ are vectors. If there exists a 
component of $\xi_{k}$ with nonzero imaginary part then $U(x)$ does have a component behaving like 
$\txte^{-x}$ or $\txte^{x}$ and we have seen in Section \ref{sec:implicit}, as well as Appendix \ref{ap:osc}, 
that we may choose a weight function with polynomial decay as $\|x\|\ra \I$ so that $U\not\in X$. 
Hence, it follows that $\xi_{k}\in\R^d$. If $f_{k}(x)$ is a non-constant polynomial for some $k$, then we may 
choose a positive weight function $w\in L^1(\R^d)$ such that $|f_{k}(x)|^pw(x)\ra+\I$ as $\|x\|\ra \I$. In 
this case, if follows that $U\not\in X$. Therefore, 
$f_{k}(x)=a_{k}$ for some constants $a_{k}$ and the possible solutions are all purely oscillatory
or homogeneous. Obviously we may assume for each fixed $k$ that $\xi_{k_1}\neq\xi_{k_2}$ for 
$k_1\neq k_2$. Applying the differential operator to $U$ yields
\be
\label{eq:conv_sol1}
0=(\cL_1U)(x)=\sum_{j=1}^\I a_{k} (-\|\xi_{k}\|^2-1) \txte^{\txti\xi_{k}^T x}.
\ee
Since $\xi_{k_1}\neq\xi_{k_2}$ for $k_1\neq k_2$ it follows from \eqref{eq:conv_sol1} that 
$a_k(\|\xi_{k}\|^2+1)=0$ for all $k\in\N$. Thus, we get that that $a_k=0$ for all $k$. 
Therefore, it follows that $U\equiv 0$ providing the required contradiction.
\end{proof}

It is important to note that the last proof does not employ the particular choice (B) given in 
\eqref{eq:weights} for the weight functions and the Sobolev space exponent. Weight
functions with rather general polynomial growth and $p\in(1,\I)$ suffice not only to 
prove Lemma \ref{lem:invertibleL1} but also to establish continuity and differentiability
properties of $F$ as long as \eqref{eq:weights_ineq1} holds; see also Section \ref{sec:func_spaces}.
Based on the knowledge of the nullspace we may now proceed to investigate invertibility.

\begin{lem}
\label{lem:invertibleL1_final}
There exists a weight function $w\in L^1(\R^d)$ such that $\cL_1:X\ra Y$ is invertible.
\end{lem}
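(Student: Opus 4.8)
The plan is to reduce invertibility to surjectivity of $\cL_1:X\ra Y$. Injectivity is already in hand: by Lemma~\ref{lem:invertibleL1} one may choose the weights -- for instance $w_X=w_Y=w=(1+\|x\|^2)^{-l}$ with $l$ large enough that $w\in L^1(\R^d)$ -- so that $\ker\cL_1=\{0\}$, and by Lemma~\ref{lem:Frechet} the operator $\cL_1$ is bounded as a map $X\ra Y$. Once surjectivity is established, the open mapping theorem supplies the bounded inverse for free. So the whole task is: given an arbitrary $f\in Y$, produce $U\in X$ with $\cL_1U=f$ and $\|U\|_X\leq C\|f\|_Y$.

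I would construct $U$ explicitly from the fundamental solution. Let $G$ be the fundamental solution of $I-\Delta$ on $\R^d$ (the Bessel/Yukawa kernel): $G\geq 0$ with $\|G\|_1=1$, $G$ is smooth off the origin and decays like $\txte^{-\|x\|}$, and near $0$ both $G$ and $\nabla G$ are locally integrable while $\txtD^2G$ carries the standard Calder\'on--Zygmund singularity $\sim\|x\|^{-d}$. Setting $U:=-G\ast f$ one has, in the distributional sense, $\cL_1U=(\Delta-I)U=-(\Delta-I)(G\ast f)=\bigl((I-\Delta)G\bigr)\ast f=\delta_0\ast f=f$, so $U$ is the only candidate. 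The estimates up to order $k+1$ are soft: for $|\alpha|\leq k$ we have $\txtD^\alpha U=-G\ast\txtD^\alpha f$ and $\nabla\txtD^\alpha U=-(\nabla G)\ast\txtD^\alpha f$, and since $G,\nabla G\in L^1(\R^d)$ the weighted Young inequality (Proposition~\ref{prop:Young}, via the computation \eqref{eq:mini_calc}, which only uses $w\leq 1$) gives $\|U\|_{k+1,p;w}\leq(\|G\|_1+\|\nabla G\|_1)\,\|f\|_{k,p;w}$. For the $W^{k,2p}(\R^d;w)$ factor of the $X$-norm I would split $G=G_{\mathrm{loc}}+G_{\mathrm{tail}}$ with $G_{\mathrm{tail}}\in L^1\cap L^\infty$, handled by interpolated weighted Young inequalities, and $G_{\mathrm{loc}}$ dominated by the order-two Riesz potential, for which a weighted Hardy--Littlewood--Sobolev (Stein--Weiss) inequality yields $\|G_{\mathrm{loc}}\ast\txtD^\alpha f\|_{2p;w}\leq C\|\txtD^\alpha f\|_{p;w}$ in the admissible range of exponents; alternatively one deduces this factor from the $W^{k+2,p}(w)$-bound below together with a weighted Sobolev embedding.

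The genuinely hard step is the two-derivative gain: combining $\Delta U=U+f\in W^{k,p}(\R^d;w)$ with $U\in W^{k,p}(\R^d;w)$ one must conclude $U\in W^{k+2,p}(\R^d;w)$, i.e.\ a weighted Calder\'on--Zygmund estimate $\|\txtD^2U\|_{k,p;w}\leq C\bigl(\|\Delta U\|_{k,p;w}+\|U\|_{k,p;w}\bigr)$. On the unweighted $L^p$-scale this is the Mikhlin--H\"ormander multiplier theorem, valid for $1<p<\infty$ and all $d$. The difficulty is that the $L^1$-weights natural here are \emph{not} Muckenhoupt $A_p$ weights, so off-the-shelf weighted Calder\'on--Zygmund theory does not apply directly; instead I would localize. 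Cover $\R^d$ by unit cubes $Q_j$, on each of which the slowly varying weight $w$ is comparable to a constant $w_j$; on $Q_j$ apply the interior (unweighted) estimate $\|\txtD^2U\|_{L^p(Q_j)}\leq C\bigl(\|\Delta U\|_{L^p(2Q_j)}+\|U\|_{L^p(2Q_j)}\bigr)$, keeping track of the fast (here exponential) decay in the nonlocal tail of the singular-integral bound, and then resum in $j$, absorbing the polynomial mismatch of $w$ between neighbouring scales into that decay. This is the $L^p$-analogue of the pseudodifferential calculus behind the $p=2$ result \cite[Thm.~4.2]{GindikinVolevich} quoted in Proposition~\ref{prop:L1}, where the polynomial weight is itself the symbol of an elliptic operator commuting with $(I-\Delta)^{-1}$ up to lower-order terms. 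Collecting the three families of estimates yields $U\in X$ with $\|U\|_X\leq C\|f\|_Y$, so $\cL_1$ is onto and hence boundedly invertible.

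I expect this weighted second-order estimate to be the main obstacle; everything else reduces to convolution against an $L^1$ kernel. Two simplifications are worth recording. In dimension $d=1$ there is no singular integral: the fundamental solution is $\tfrac12\txte^{-|x|}$, whose first derivative is again in $L^1$ and whose distributional second derivative is an $L^1$ function minus $\delta_0$, so $U''=\tfrac12\txte^{-|x|}\ast f-f$ and every step is a plain (weighted) Young estimate; moreover $W^{2,p}(\R)\hookrightarrow L^\infty(\R)$, so the $W^{k,2p}(w)$ factor is automatic. And for the concrete choice (B) one may simply cite \cite[Thm.~4.2]{GindikinVolevich}, as in Proposition~\ref{prop:L1}; the purpose of the present lemma is to record that a direct argument is available for a broader class of polynomially behaved weights and for any $p\in(1,\infty)$.
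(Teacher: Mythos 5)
Your construction is correct, but it takes a genuinely different route from the paper's. The paper does \emph{not} build the inverse explicitly: it first observes $C^{k+2+\gamma}(\R^d)\subset X$ because $w\in L^1(\R^d)$, deduces from Lemma~\ref{lem:invertibleL1} that $\cL_1$ has trivial nullspace on $C^{k+2+\gamma}(\R^d)$, and then invokes Mukhamadiev's theorem \cite{Mukhamadiev} -- a Fredholm-type structural result asserting that for this class of elliptic operators on $\R^d$ a trivial nullspace in the H\"older scale already implies bounded invertibility $C^{k+2+\gamma}\to C^{k+\gamma}$ -- and finally transfers the result to $X$ by mollifying $f$, solving in the H\"older scale, and passing to the limit with a coercivity estimate for $\cL_1$ borrowed from \cite{MitinaTyurin}. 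You instead exhibit the inverse directly as convolution with the Bessel kernel $G$ of $(I-\Delta)^{-1}$ and carry the weight through weighted Young plus a weighted Calder\'on--Zygmund estimate for the two-derivative gain. The trade-off is clear: you avoid the two black-box citations (Mukhamadiev, Mitina--Tyurin) at the cost of the weighted singular-integral estimate, and you correctly identify that the polynomial weight is \emph{not} Muckenhoupt, so one must localize on unit cubes where $w$ is flat and use the exponential decay of the CZ kernel tail to absorb the polynomial weight mismatch on resummation -- which is precisely the analytic content of the $p=2$ result \cite[Thm.~4.2]{GindikinVolevich} already cited in Proposition~\ref{prop:L1}. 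Neither sketch is "easier": both reduce the lemma to one substantial deferred estimate (your weighted CZ bound versus the paper's coercivity bound), and both deliver the same conclusion. One small caveat: your route uses that $w$ is comparable to a constant on unit cubes (slowly varying), a mild structural assumption beyond bare membership in $L^1$, whereas the paper's route only uses $w\in L^1$ through the embedding $C^{k+2+\gamma}\subset X$ and through whatever hypotheses the cited coercivity estimate needs; both are consistent with the concrete family (B), so this does not affect the statement as written.
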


\begin{proof}(Sketch of an alternative to Proposition \ref{prop:L1})
For the proof we shall need as auxiliary spaces the 
H{\"{o}}lder spaces $C^{k+\gamma}(\R^d)$ for $k\in\N_0$ and $\gamma\in(0,1)$ as defined in Section 
\ref{sec:func_spaces}. By Lemma \ref{lem:invertibleL1} we know that $\cL_1U=0$ for $U\in X$ 
implies that $U\equiv 0$. Since
\beann
\|u\|_{X}&=&\max\{\|u\|_{k+2,p;w},\|u\|_{k,2p;w}\}\leq \|u\|_{k+2,p;w}+\|u\|_{k,2p;w}\\
&\leq&  2\|u\|_{C^{k+2+\gamma}(\R^d)} ~\max_{\rho\in\{p,2p\}}\|w\|_{L^1(\R^d)}^{1/\rho}
\eeann
it follows that $C^{k+2+\gamma}(\R^d)\subset X$; see also \cite{MitinaTyurin}. Hence 
$\cL_1U=0$ for $U\in C^{k+2+\gamma}(\R^d)$ 
implies that $U\equiv 0$ so that $\cL_1$ has trivial nullspace as an operator 
\benn
\cL_1:C^{k+2+\gamma}(\R^d)\ra C^{k+\gamma}(\R^d). 
\eenn
Applying Mukhamadiev's Theorem \cite{Mukhamadiev} it follows that 
$\cL_1:C^{k+2+\gamma}(\R^d)\ra C^{k+\gamma}(\R^d)$ is invertible. To show invertibility on 
$X$ an approximation argument can be used similar to the strategy in \cite{MitinaTyurin}.
Let $\kappa\in C_c^\I(\R^d)$ with $\|\kappa\|_{L^1(\R^d)}=1$ and consider 
$\kappa_\epsilon(x)=\epsilon^{-d}\kappa(x/\epsilon)$ so that 
$\|\kappa_\epsilon\|_{L^1(\R^d)}=\|\kappa\|_{L^1(\R^d)}$. Let $f\in L^p(\R^d;w)$ and 
define $f_\epsilon:=\kappa_\epsilon\ast f$. Then a standard analysis result (\cite[p.64]{LiebLoss}, 
see also Appendix \ref{ap:LiebLoss}) implies that 
\benn
f_\epsilon \in C^\I(\R^d) \qquad \text{and} \qquad f_\epsilon\ra f ~\text{in $L^p(\R^d;w)$.}
\eenn
Due to the invertibility of $\cL_1:C^{k+2+\gamma}(\R^d)\ra C^{k+\gamma}(\R^d)$ it follows that 
the equation
\benn
\cL_1u =f_\epsilon
\eenn
has a unique solution $u_\epsilon\in C^{k+2+\gamma}(\R^d)$ for each $\epsilon>0$. The desired result 
will follow if one can show that $u_\epsilon$ converges to some function $u_0\in X$ as $\epsilon \ra 0$
but the convergence essentially follows from a suitable coercivity estimate for the operator 
$\cL_1$ as discussed in \cite{MitinaTyurin}.
\end{proof}

The main point of the last proof is that we have picked a more complicated weak solution space $X$
but the invertibility of the restricted operator onto H\"{o}lder space, in combination with 
an approximation argument, can be used to yield invertibility for $\cL_1:X\ra Y$. Note that one still has to make
some restrictions on $w_X$, $w_Y$, $p$ but that no explicit choice is necessary if a direct investigation
of the linearized operator is considered.

\end{document}